\DeclareMathAlphabet{\mathsuet} {T1} {wesu}{bx}{sl}
\newcommand{\Fraisse}{Fra\"{i}ss\'{e}\xspace} 
\newcommand{\Jarik}{Ne\v set\v ril\xspace}
\newcommand{\N}{\mathbb{N}} 
\newcommand{\age}[1]{\mathrm{Age}(#1)}
\newcommand{\trg}{{\normalfont(}$\bigtriangleup${\normalfont )}\xspace}
\newcommand{\ctrg}{{\normalfont(}$\therefore${\normalfont)}\xspace}
\newcommand{\img}[1]{\mathrm{im}(#1)\xspace}	
\newcommand{\dom}[1]{\mathrm{dom}(#1)\xspace}	
\newcommand{\kk}[1]{\mathcal K(#1)}		
\newcommand{\okk}[1]{{\overline{\mathcal K}}(#1)}
\newcommand{\str}{{\normalfont(}$\ast${\normalfont)}\xspace}
\newcommand{\dgr}{{\normalfont(}$\dagger${\normalfont)}\xspace}
\newcommand{\hh}[1]{\mathcal H(#1)}
\newcommand{\ohh}[1]{{\overline{\mathcal H}}(#1)}
\newtheorem{theorem}{Theorem}
\newtheorem{lemma}[theorem]{Lemma}
\newtheorem{corollary}[theorem]{Corollary}
\newtheorem{proposition}[theorem]{Proposition}
\newtheorem{definition}[theorem]{Definition}
\newdefinition{construction}{Construction}
\newdefinition{remark}{Remark}
\newdefinition{example}{Example}
\newdefinition{notation}{Notation}
\newtheorem{observation}[theorem]{Observation}
\begin{document}
\begin{frontmatter}
\title{The poset of morphism-extension classes of countable graphs}

\author[ia]{Andr\'{e}s Aranda}
\address[ia]{Institut f\"ur Algebra, Technische Universit\"{a}t Dresden, Zellescher Weg 12-14, Dresden.}
\ead{andres.aranda@gmail.com}
\begin{abstract}
Let $\mathrm{XY_{L,T}}$ denote the class of countably infinite $L$-structures that satisfy the axioms $T$ and in which all homomorphisms of type X (these could be homomorphisms, monomorphisms, or isomorphisms) between finite substructures of $M$ are restrictions of an endomorphism of $M$ of type Y (for example, an automorphism or a surjective endomorphism). Lockett and Truss \cite{LockettTruss:2014} introduced 18 such \emph{morphism-extension classes} for relational structures. For a given pair $L,T$, however, two or more morphism-extension properties may define the same class of structures.

In this paper, we establish all equalities and inequalities between morphism-extension classes of countable (undirected, loopless) graphs.
\end{abstract}

\begin{keyword}
homomorphism-homogeneity \sep morphism-extension classes \sep infinite graphs
\MSC[2010] 03C15\sep 05C60\sep 05C63\sep 05C69\sep 05C75
\end{keyword}

\end{frontmatter}
\section{Introduction}
The notion of homomorphism-homogeneity was introduced by Cameron and \Jarik in \cite{CameronNesetril:2006} as a generalization of ultrahomogeneity in which homomorphisms whose domain is a finite substructure of $M$ (\emph{local homomorphisms}) are restrictions of endomorphisms. Later, Lockett and Truss \cite{LockettTruss:2014} introduced finer distinctions in the class of homo\-morphism-homogeneous $L$-structures, characterized by the type of homomorphism between finite induced substructures of $M$ and the type of endomorphism to which such homomorphisms can be extended. In total, they introduced 18 \emph{morphism-extension classes}, partially ordered by inclusion. 

We call a relational structure $M$ XY-homogeneous if every X-morphism between finite induced substructures extends to a Y-morphism $M\to M$, where $\mathrm{X\in\{I,M,H\}}$ and $\mathrm{Y\in\{H,I,A,E,B,M\}}$. The meaning of these symbols is as follows:
\vspace{-0.2cm}\begin{itemize}
\setlength\itemsep{0em}
\item[$\ast$]{H: homomorphism.}
\item[$\ast$]{M: monomorphism (injective homomorphism).}
\item[$\ast$]{I: isomorphism; an isomorphism $M\to M$ is also called a self-embedding.}
\item[$\ast$]{A: automorphism, (surjective isomorphism $M\to M$).}
\item[$\ast$]{E: epimorphism, (surjective homomorphism).}
\item[$\ast$]{B: bimorphism, (surjective monomorphism).}
\end{itemize}

For example, ultrahomogeneous structures are IA-homogeneous structures in this formulation, and the homomorphism-homogeneous structures of Cameron and \Jarik are our HH-homogeneous structures. The partial order of morphism-extension classes of a general class of countable relational structures is presented in Figure \ref{fig:ctblestrs}.
\begin{figure}[h!]
\centering
\includegraphics[scale=0.7]{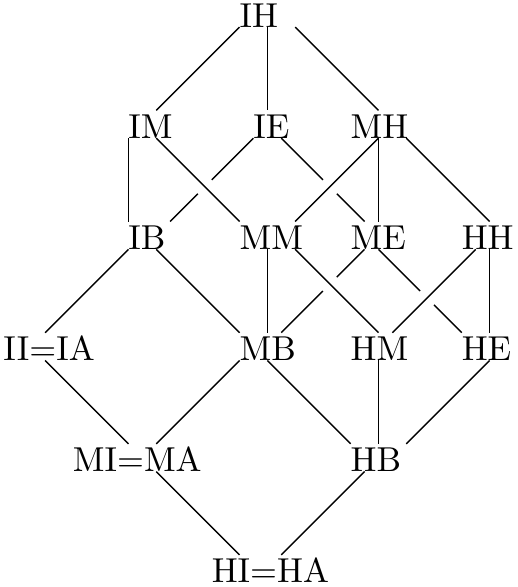}
\caption{Morphism extension classes of countable structures, ordered by $\subseteq$.}
\label{fig:ctblestrs}
\end{figure}

We mention here that further refinements exist; for example, polymorphism-homogeneous structures (a subclass of the class of HH-homogeneous structures) were studied in \cite{pech2015polymorphism}. In this paper we will only consider the 18 classes from \cite{LockettTruss:2014}.

The partial order of morphism-extension classes depends on the type of structures that one considers (graphs, partial orders, directed gaphs, etc.). For each type of relational structure we can ask what its partial order of morphism-extension classes looks like. In this paper we extend results from Rusinov and Schweitzer \cite{RusinovSchweitzer:2010}, who proved the equalities MH=HH for countable graphs and MH=HH=MM for countable connected graphs. We will establish all other equalities between morphism-extension classes of graphs and provide examples for all proper inclusions. In the course of our proofs, some structural information about the graphs in MB, HE, and ME will be derived.

This is part of an effort towards a classification of homomorphism-homogeneous graphs, by which we mean, a collection of lists of structures in each morphis-extension class, up to some suitable equivalence.

\section{Classification results, conventions, and lingo}
The purpose of this section is to familiarize the reader with some results from the literature that will be useful later, and to introduce some special terms. 

Most of our notation is standard, but for clarity we mention that the edge relation will be denoted by $\sim$, $\overline G$ denotes the complement of $G$, $f_X$ is the restriction of $f$ to $X$, and all subgraphs in the text are induced. The \emph{degree} of a vertex $v\in G$ is the cardinality of $\{w\in G:v\sim w\}$, and its \emph{codegree} in $G$ is its degree in $\overline{G}$. A homomorphism (or monomorphism, or isomorphism) $f\colon A\to B$ where $A$ is a finite subgraph of a countably infinite graph will be called a \emph{local} or \emph{finite} homomorphism (or monomorphism, or isomorphism).

Given two graphs $G$ and $H$ with disjoint vertex sets, we can form the \emph{graph composite} or \emph{lexicographic product} of $G$ and $H$, denoted by $G[H]$, as follows: the vertex set is $G\times H$ and $(g,h)\sim(g',h')$ if $g\sim g'$ in $G$ or $g=g'$ and $h\sim h'$ in $H$. In $G[H]$, each set of the form $\{g\}\times H$ induces an isomorphic copy of $H$ and for any function $f:G\to H$, the set $\{(g,f(g)):g\in G\}$ with its induced subgraph structure in $G[H]$ is isomorphic to $G$. We will use $I_\kappa$ to denote an independent set (empty graph or subgraph) of size $\kappa$.

We remind the reader of some famous graphs. The \emph{Rado graph} $\mathcal R$ is the \Fraisse limit of the class of all finite graphs, and as such it is an ultrahomogeneous (IA) graph. It is also characterised among countably infinite graphs by the following \emph{extension axioms}: for all finite disjoint $A,B\subset\mathcal R$, there exists a vertex $v\in\mathcal R$ such that $v\sim a$ for all $a\in A$, and $v\not\sim b$ for all $b\in B$. The \emph{universal homogeneous $K_n$-free graph} $\mathcal H_n$ is the \Fraisse limit of the class of all finite $K_n$-free graphs, and is characterised among countable graphs by similar extension axioms, with the additional requirement that $A$ be $K_{n-1}$-free. 

Classification results are particularly useful to separate the classes. In the case of homomorphism-homogeneous graphs, two infinite classes have been classified.

\begin{theorem}[Lachlan-Woodrow 1980 \cite{LachlanWoodrow:1980}]\label{thm:lw}
Let $G$ be a countably infinite IA-homogeneous graph. Then $G$ or its complement is isomorphic to one of the following:
\begin{enumerate}
\item{$K_\omega$,}
\item{$I_\omega[K_n]$ for some $n\in\omega, n\geq 2$,}
\item{$I_n[K_\omega]$ for some $n\in\omega+1, n\geq2$,}
\item{the Rado graph $\mathcal R$, or }
\item{the universal homogeneous $K_n$-free graph, for some $n\in\omega, n\geq 3$.}
\end{enumerate}
\end{theorem}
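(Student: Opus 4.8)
The plan is to combine the \Fraisse correspondence with a structural dichotomy based on (im)primitivity. Since $G$ is IA-homogeneous it is the \Fraisse limit of $\age{G}$, which is an amalgamation class of finite graphs, and conversely $G$ is determined up to isomorphism by $\age{G}$. Thus the whole problem reduces to identifying which amalgamation classes of finite graphs occur and recognising the resulting limits. Because the list is closed under complementation (and $\overline{G}$ is IA-homogeneous whenever $G$ is), I may freely pass to $\overline{G}$, and I would organise the argument around whether $G$, $\overline{G}$, or neither is connected.

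First I would dispose of the \emph{imprimitive} case, where $G$ or $\overline{G}$ is disconnected; after complementing, say $G$ is disconnected. I claim every connected component is complete. Otherwise some component $C$ contains a non-edge $\{u,v\}$, while disconnectedness supplies $w$ in another component with $u\not\sim w$; the two non-edges $\{u,v\}$ and $\{u,w\}$ are isomorphic as $2$-vertex graphs, so by homogeneity an automorphism fixes $u$ and sends $v\mapsto w$, which is impossible since automorphisms preserve components. Hence all components are cliques, and since single vertices are isomorphic, homogeneity makes $\aut{G}$ transitive on components, so they all have the same size. Therefore $G\cong I_\kappa[K_n]$, and reading off the countably infinite possibilities (and their complements) yields exactly cases $1$--$3$.

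It then remains to treat the \emph{primitive} case, where $G$ and $\overline{G}$ are both connected. By Ramsey's theorem at least one of the clique number and the independence number of $G$ is infinite. The key reduction is that, by the one-point extension property of \Fraisse limits together with homogeneity, $G$ satisfies the extension axioms of $\mathcal{R}$ as soon as $\age{G}$ contains every finite graph, and $G\cong\mathcal{H}_n$ as soon as $G$ is $K_n$-free and $\age{G}$ contains every finite $K_n$-free graph. So the real content is to prove that, in the primitive case, $\age{G}$ is as large as the clique/anti-clique constraints permit: if both numbers are infinite then $\age{G}$ is all finite graphs, while if (say) $G$ is $K_n$-free then $\age{G}$ is all finite $K_n$-free graphs.

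The main obstacle is precisely this \emph{fullness of the age}, i.e.\ showing that every minimal forbidden induced subgraph $F$ of $G$ is a complete graph or an independent set. The tool is amalgamation: for vertices $a,b\in F$, minimality puts $F-a$ and $F-b$ in $\age{G}$, and amalgamating them over $F-\{a,b\}$ produces a graph in $\age{G}$ on the vertex set of $F$ that agrees with $F$ except possibly in the adjacency of $a$ and $b$; since $F$ is forbidden, that adjacency must be flipped, and exploiting this for varying pairs is what eventually forces $F$ to be complete or edgeless, although making this rigorous requires a careful case analysis. Primitivity enters to rule out the degenerate forbidden graphs $K_2$ and $I_2$ (which would make $G$ edgeless or complete, hence imprimitive), and Ramsey's theorem forbids simultaneously excluding a clique and an anti-clique in an infinite graph; together these leave only ``forbid nothing'' ($\mathcal{R}$), ``forbid one $K_n$'' ($\mathcal{H}_n$), and the complementary case, matching cases $4$ and $5$. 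I expect this amalgamation analysis of forbidden subgraphs to be the most delicate part, since it is here that the full strength of the amalgamation property, rather than mere homogeneity, is needed.
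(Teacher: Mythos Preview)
The paper does not prove this theorem at all: it is quoted as a result from the literature (Lachlan--Woodrow 1980, reference \cite{LachlanWoodrow:1980}) and used only as a black box, chiefly to separate morphism-extension classes. So there is no ``paper's own proof'' to compare against; you have supplied a sketch where the paper supplies a citation.

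On the merits of your sketch: the imprimitive case is handled cleanly and correctly (indeed the paper proves essentially the same fact under weaker hypotheses in Lemma~\ref{lem:disccliques}). Your reduction of the primitive case to ``every minimal forbidden induced subgraph is a clique or an independent set'' is also the right target, and your amalgamation observation (that amalgamating $F-a$ and $F-b$ over $F-\{a,b\}$ yields, after restricting, a graph in $\age{G}$ differing from $F$ only in the $ab$-adjacency) is sound. However, the sentence ``exploiting this for varying pairs is what eventually forces $F$ to be complete or edgeless'' hides essentially the entire content of the Lachlan--Woodrow paper: that step is a long and delicate induction, not a routine case analysis, and your outline gives no indication of how it would go. So as a proof \emph{plan} your proposal is accurate, but as a proof it stops exactly where the difficulty begins. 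One terminological quibble: $K_\omega$ and $I_\omega$ have $2$-transitive (hence primitive) automorphism groups, so calling them ``imprimitive'' is nonstandard; what you mean, and what your argument actually uses, is simply that one of $G,\overline{G}$ is disconnected.
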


\begin{theorem}[Aranda-Hartman 2020 \cite{aranda2019independence}]\label{thm:ah}
Let $G$ be a countably infinite MB-homogeneous graph. Then $G$ or its complement is bi\-morphism-equivalent to one of the following:
\begin{enumerate}
\item{$K_\omega$,}
\item{$I_\omega[K_\omega]$,}
\item{The Rado graph $\mathcal R$.}
\end{enumerate}
\end{theorem}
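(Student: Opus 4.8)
The plan is to organise the proof around the connectivity of $G$ and of its complement $\overline G$. Since for any graph (finite or countable) at least one of $G,\overline G$ is connected, this yields only three genuine cases — $G$ disconnected, $\overline G$ disconnected, or both connected — and I will show these correspond exactly to the five graphs $K_\omega$, $I_\omega$, $I_\omega[K_\omega]$, $K_\omega[I_\omega]=\overline{I_\omega[K_\omega]}$, and $\mathcal R$. Two tools extracted from MB-homogeneity drive everything: (i) every local monomorphism of $G$ extends to a bijective edge-preserving self-map $F$ of $G$; and (ii) if $F$ is such a map then $F^{-1}$ is a bijective edge-preserving self-map of $\overline G$, so that an argument run "in the complement" only requires realising the relevant $\overline G$-monomorphisms as inverses of genuine $G$-monomorphisms. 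I would also record the elementary fact that a bijective edge-preserving map sends each connected component into a single connected component, since the image of a connected set is connected.

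First I would treat the disconnected case. If $G$ is disconnected and edgeless then $G=I_\omega$ and $\overline G=K_\omega$. If $G$ is disconnected but has an edge, I claim every component is an infinite clique and there are infinitely many, whence $G\cong I_\omega[K_\omega]$. That components are cliques is the cleanest step: were a component $C$ to contain a non-edge $\{a,b\}$, then, since $\{a,b\}$ is an independent pair, the map $a\mapsto a$, $b\mapsto d$ with $d$ in another component is a monomorphism; extending it by (i) would force $F(a),F(b)$ into different components although $a,b$ lie in the same one, contradicting the observation above. Finiteness of a component is ruled out by counting: sending an independent pair consisting of one vertex of a component $C$ and one of another component $C'$ onto an edge inside $C$ collapses $C\sqcup C'$ injectively into $C$, impossible when $C$ is finite; isolated vertices are excluded because an edge cannot map to a vertex with no neighbour. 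The same collapsing argument, applied to finitely many components, shows there must be infinitely many.

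The co-disconnected case ($G$ connected, $\overline G$ disconnected) I would handle by transporting the previous argument through the complement. Here the $\overline G$-components are the parts $V_j$, and distinct parts are completely joined in $G$. To show each $V_j$ is independent in $G$ — so that $G$ is complete multipartite, hence $G\cong K_\omega[I_\omega]$, or $G=K_\omega$ when $\overline G$ is edgeless — suppose $V_1$ contained a $G$-edge $\{a,b\}$ and pick $d$ in another part. The complete join makes $a\mapsto a$, $d\mapsto b$ a genuine $G$-monomorphism, so by (i) it extends to a $G$-bimorphism $F$ with $F(a)=a$, $F(d)=b$; by (ii) its inverse $H=F^{-1}$ is a bijective edge-preserving self-map of $\overline G$ with $H(a)=a$, $H(b)=d$. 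Since $a,b$ lie in the same $\overline G$-component while $a,d$ do not, this contradicts the connected-image observation applied now in $\overline G$. Dual versions of the counting arguments then force infinitely many parts, each infinite.

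What remains, and what I expect to be the main obstacle, is the case where $G$ and $\overline G$ are both connected, where the goal is to show $G$ is bimorphism-equivalent to $\mathcal R$. Here I would aim to derive the one-point extension property of the Rado graph: for all finite disjoint $A,B$ there is a vertex adjacent to every vertex of $A$ and to none of $B$. The idea is to secure the two halves separately — common neighbours of finite sets from connectivity of $G$, and common non-neighbours from connectivity of $\overline G$ — and then to amalgamate them into the full extension property using MB-homogeneity to move a realised partial pattern into the required position. The delicate point is that MB-homogeneity supplies only surjective monomorphisms, not isomorphisms, so transporting an adjacency pattern onto a prescribed finite set without disturbing it requires care; once the extension axioms hold, $G\cong\mathcal R$ follows, and failing a literal isomorphism I would settle for mutual edge-preserving bijections between $G$ and $\mathcal R$ built by a back-and-forth argument exploiting the genericity of $\mathcal R$. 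Assembling the three cases then yields the stated list.
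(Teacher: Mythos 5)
Your treatment of the disconnected case and of the case where $\overline G$ is disconnected is essentially sound: the component-collapsing arguments and the observation that the inverse of a bimorphism of $G$ is an edge-preserving bijection of $\overline G$ are exactly the right tools, and they do yield $K_\omega$, $I_\omega$, $I_\omega[K_\omega]$ and its complement. (The paper itself does not prove this theorem but imports it from \cite{aranda2019independence}; I am comparing your sketch against the argument visible through the supporting results quoted here.)

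The gap is in the main case, where $G$ and $\overline G$ are both connected. First, the target you set yourself --- the full one-point extension property ``adjacent to all of $A$ and to none of $B$'' --- is provably unattainable: that property characterises $\mathcal R$ up to isomorphism, whereas there are $2^{\aleph_0}$ pairwise non-isomorphic MB-homogeneous graphs (cited in this paper via Section 3.1 of \cite{ColemanEvansGray:2019}). So your proposed ``amalgamation'' step cannot succeed; the correct target is the pair of separate properties \trg (a cone over every finite subset) and \ctrg (a co-cone over every finite subset), which by the proposition quoted in Section 2 is precisely equivalent to bimorphism-equivalence with $\mathcal R$ --- your fallback position is in fact the only viable one. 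Second, and more seriously, your claim that these two halves follow from ``connectivity of $G$'' and ``connectivity of $\overline G$'' is unsubstantiated and false as stated: the Rusinov--Schweitzer graphs are connected and MM-homogeneous (indeed HH-homogeneous) yet fail \trg, so connectivity together with extendability of local monomorphisms to injective endomorphisms does not give cones over all finite sets. What is actually needed is the surjectivity built into MB-homogeneity: one shows every vertex has infinite degree and codegree, grows independent sets by taking preimages under the epimorphisms extending suitable local monomorphisms (so that $\alpha(G)=\infty$), and then invokes the quantitative bound of Theorem \ref{thm:indeptrg} (Theorem 20 of \cite{aranda2019independence}), which forces a connected HH-homogeneous graph failing \trg to have independence number bounded in terms of $\sigma(G)$. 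That bound is the real content of the classification in the case that produces $\mathcal R$, and it is entirely absent from your proposal.
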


Bimorphism-equivalence is the relation that holds between two structures $M$ and $N$ if there exist bijective homomorphisms $M\to N$ and $N\to M$. 

In this paper, we will use the symbol XY to denote the class of countable XY-homogeneous graphs, up to isomorphism. This means, in particular, than an equation like HM=$\{K_\omega\}$ should be interpreted as saying that every countable HM-homogeneous graph is isomorphic to $K_\omega$.

 Since we know all the graphs in MB and IA, the following result tells us that there are no new graphs in IB.

\begin{theorem}[Aranda 2019 \cite{aranda2019ib}]\label{thm:aa}
For countable graphs, $\mathrm{IB}=\mathrm{MB}\cup\mathrm{IA}$.
\end{theorem}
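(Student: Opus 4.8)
The containment $\mathrm{MB}\cup\mathrm{IA}\subseteq\mathrm{IB}$ is the routine direction and is already visible in Figure \ref{fig:ctblestrs}: every local isomorphism is in particular a local monomorphism, so $\mathrm{MB}$-homogeneity (all local monomorphisms extend to bimorphisms) forces $\mathrm{IB}$-homogeneity; and every automorphism is a surjective monomorphism, hence a bimorphism, so $\mathrm{IA}$-homogeneity forces $\mathrm{IB}$-homogeneity. The content of the theorem is the reverse inclusion $\mathrm{IB}\subseteq\mathrm{MB}\cup\mathrm{IA}$, and I would prove it by a dichotomy on ultrahomogeneity.

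So let $G$ be a countable $\mathrm{IB}$-homogeneous graph. If $G$ is ultrahomogeneous then $G\in\mathrm{IA}$ and we are done, so assume it is not. Then some local isomorphism $p$ fails to extend to any automorphism; by $\mathrm{IB}$-homogeneity it nonetheless extends to a bimorphism $g\colon G\to G$. Since a bijective homomorphism whose inverse is also a homomorphism is an automorphism, $g$ cannot be an isomorphism, so there is a non-edge $u\not\sim v$ with $g(u)\sim g(v)$. Thus $G$ carries a \emph{proper} bimorphism: a bijective endomorphism that strictly adds edges. My goal in this case is to show $G\in\mathrm{MB}$, i.e.\ that every local monomorphism extends to a bimorphism.

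The remaining task is therefore to extend an arbitrary local monomorphism $f\colon A\to B$ between finite induced subgraphs. If $f$ reflects edges it is an isomorphism and extends by $\mathrm{IB}$-homogeneity, so the real case is when $f$ adds at least one edge. Here the plan is to combine the two powers available: $\mathrm{IB}$-homogeneity supplies global bijective homomorphisms realizing any prescribed isomorphism between finite configurations, while the proper bimorphism $g$ (and its compositions, which are again bimorphisms) supplies a mechanism for introducing new edges. Using $\mathrm{IB}$-homogeneity to relocate the non-edge witnessed by $g$ onto the pairs where $f$ adds edges, I would build the desired extension by a back-and-forth construction, enumerating the vertices of $G$ for both domain and range and at each step extending the partial bijective homomorphism to cover one more vertex and to hit one more vertex, maintaining throughout that the current partial map is a partial monomorphism that still admits completion. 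As a check, or as a fallback if the direct construction stalls, the graphs produced can be matched against the classification in \Cref{thm:ah}: one shows that a non-ultrahomogeneous $\mathrm{IB}$ graph is bimorphism-equivalent to $K_\omega$, $I_\omega[K_\omega]$, or $\mathcal R$, each of which is verified directly to be $\mathrm{MB}$-homogeneous.

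The main obstacle is precisely this upgrade from $\mathrm{IB}$ to $\mathrm{MB}$ in the non-ultrahomogeneous case: turning the existence of a single global proper bimorphism, together with extendibility of isomorphisms, into extendibility of every edge-adding monomorphism. The difficulty is controlling non-edges globally while forcing a prescribed finite set of new edges and simultaneously keeping the map surjective — a naive composition of $g$ with an isomorphism-extension may add uncontrolled edges elsewhere or destroy bijectivity, so the surjectivity bookkeeping in the back-and-forth, rather than the edge-adding itself, is where the argument must be most careful.
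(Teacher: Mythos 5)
First, a remark on context: the paper does not prove this statement itself --- it imports it from \cite{aranda2019ib} as Theorem~\ref{thm:aa} --- so the only thing to assess is whether your argument would actually stand on its own. Your easy direction ($\mathrm{MB}\cup\mathrm{IA}\subseteq\mathrm{IB}$) is correct, and your dichotomy on ultrahomogeneity is the right opening move: if $G$ is $\mathrm{IB}$ but not $\mathrm{IA}$, some local isomorphism extends to a bimorphism but to no automorphism, so $G$ carries a proper (edge-adding) bimorphism.

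The gap is that the entire content of the theorem --- upgrading a non-ultrahomogeneous $\mathrm{IB}$ graph to $\mathrm{MB}$ --- is left as a plan rather than an argument. A back-and-forth construction for $\mathrm{MB}$-homogeneity needs two \emph{one-point} extension properties: given a finite partial monomorphism and $c\notin A$, find $d\notin B$ adjacent to $f(a)$ for every $a\sim c$; and given $d\notin B$, find a preimage $c\notin A$ with $c\sim a\Rightarrow d\sim f(a)$. These are exactly the properties \trg and \ctrg (applied to suitable subsets), and nothing in your sketch derives either of them from ``$\mathrm{IB}$-homogeneity plus one proper bimorphism $g$.'' Your proposed mechanism --- use $\mathrm{IB}$-homogeneity to relocate the non-edge collapsed by $g$ onto the pairs where $f$ adds edges --- does not scale to a finite monomorphism that must simultaneously add several edges while preserving several non-edges inside the same finite set $A$: composing $g$ (or its iterates) with extensions of isomorphisms adds edges you cannot control, which you acknowledge but do not resolve. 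The invariant ``still admits completion'' is precisely the thing that has to be defined and verified, and it is not. Your fallback is equally circular: asserting that a non-ultrahomogeneous $\mathrm{IB}$ graph is bimorphism-equivalent to $K_\omega$, $I_\omega[K_\omega]$, or $\mathcal R$ is essentially a restatement of the theorem (via Theorem~\ref{thm:ah}), not a proof of it. The known route in \cite{aranda2019ib} closes this gap by showing directly that a non-$\mathrm{IA}$ graph in $\mathrm{IB}$ must satisfy \trg and \ctrg (hence is bimorphism-equivalent to $\mathcal R$ and, by the Coleman--Evans--Gray criterion, $\mathrm{MB}$-homogeneous); that derivation of the two extension properties from failure of ultrahomogeneity is the missing heart of the argument.
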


Some equalities between morphism-extension classes of graphs are known. 
\begin{theorem}[Rusinov-Schweitzer 2010]\label{thm:rs}
A countable graph $G$ is MH-homo\-geneous iff it is HH-homogeneous. Moreover, the only countable MH=HH-homo\-geneous graphs that are not MM-homogeneous are of the form $I_\omega[K_n]$ with finite $n\geq 2$.
\end{theorem}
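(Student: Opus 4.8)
The inclusion $\mathrm{HH}\subseteq\mathrm{MH}$ is immediate, since a monomorphism is in particular a homomorphism, so the content of the first assertion is the reverse inclusion: every $\mathrm{MH}$-homogeneous graph is $\mathrm{HH}$-homogeneous. The plan is to take a local homomorphism $f\colon A\to G$ with $A$ finite and extend it to an endomorphism by a forth-only argument, enumerating $V(G)\setminus A=\{v_1,v_2,\dots\}$ and successively extending a finite partial homomorphism $h$ (with $h_A=f$) to include each $v_k$. Adding $v_k$ is possible precisely when the finite set $h(N(v_k)\cap\mathrm{dom}(h))$ has a common neighbour in $G$; since $v_k$ is itself a common neighbour of $N(v_k)\cap\mathrm{dom}(h)$, the whole argument reduces to a single claim: whenever a finite set $S\subseteq G$ has a common neighbour and $h$ is a partial homomorphism defined on $S$, the image $h(S)$ again has a common neighbour. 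Equivalently, I would organise the proof as an induction on the \emph{collapse defect} $|A|-|f(A)|$: if $f$ is injective it is a monomorphism and $\mathrm{MH}$-homogeneity applies directly, and otherwise I would peel off one identified pair at a time.

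The heart of the matter --- and the step I expect to be the main obstacle --- is that $\mathrm{MH}$-homogeneity only promises extensions of \emph{injective} local maps, whereas a genuine homomorphism $f$ must fold a non-adjacent pair $a\not\sim b$ (fibres of a homomorphism are independent sets, since $G$ is loopless) onto a single vertex. To produce such a fold from $\mathrm{MH}$-homogeneity I would feed a carefully chosen monomorphism into the hypothesis and read off the required common neighbour or folding endomorphism. The delicate point is that the homomorphic image $f(A)\subseteq G$ carries its induced structure, which may contain edges $f(x)\sim f(y)$ with $x\not\sim y$; these ``extra'' adjacencies must be reconciled so that the partial map one builds remains a homomorphism and so that the inverse of an injective restriction of $f$ is itself a monomorphism that can be handed to $\mathrm{MH}$. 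Controlling these extra edges while still collapsing the desired pair is where the real work lies.

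For the ``moreover'' statement I would argue in two directions. That each $I_\omega[K_n]$ with finite $n\ge2$ lies in $\mathrm{MH}\setminus\mathrm{MM}$ is witnessed by an explicit map: taking one vertex from each of two distinct $K_n$-components and sending them into a common third component gives a monomorphism (its domain is an independent pair, so every injective map on it is a monomorphism) that extends to a homomorphism but to no \emph{injective} endomorphism, because any injective extension would be forced to send the two neighbourhoods into a single finite clique and thereby create a collision. The finiteness of $n$ is essential here: for $I_\omega[K_\omega]$ the same configuration extends injectively, since two disjoint infinite subsets fit inside one infinite clique, so these graphs remain in $\mathrm{MM}$.

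For the converse I would first establish that every \emph{connected} $\mathrm{MH}$-homogeneous graph is $\mathrm{MM}$-homogeneous, so that any $G\in\mathrm{MH}\setminus\mathrm{MM}$ is disconnected, and then analyse a witnessing monomorphism $f$ that admits no injective extension. The obstruction to injectivity should force the components of $G$ to be cliques of one common finite size $n\ge2$, with infinitely many of them (finitely many finite cliques would make $G$ finite), yielding $G\cong I_\omega[K_n]$. The main difficulty in this direction is ruling out all other disconnected configurations --- infinite cliques, independent sets, or mixed component sizes --- by showing each either fails $\mathrm{MH}$-homogeneity or actually satisfies $\mathrm{MM}$-homogeneity; the structural information about $\mathrm{HH}$-homogeneous graphs available once $\mathrm{MH}=\mathrm{HH}$ is known should drive this classification.
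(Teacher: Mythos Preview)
The paper does not prove this theorem; it is quoted from Rusinov--Schweitzer as a background result in Section~2, so there is no in-paper argument to compare your proposal against.

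That said, your plan is essentially the standard one, but you overestimate the difficulty of the key step. You write that one must arrange ``that the inverse of an injective restriction of $f$ is itself a monomorphism that can be handed to $\mathrm{MH}$,'' and worry about extra edges $f(x)\sim f(y)$ with $x\not\sim y$ obstructing this. There is no need to invert anything. In this paper a monomorphism is merely an injective homomorphism, not an embedding, so extra edges in the target are harmless: if $T\subseteq A$ is any transversal of the fibres of $f$, then $f_T\colon T\to f(A)$ is already a monomorphism. Feed $f_T$ directly to $\mathrm{MH}$, obtain an endomorphism $F$ extending it, and for any cone $c$ over $A$ the vertex $F(c)$ is a cone over $f(T)=f(A)$. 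This dissolves the ``main obstacle'' and makes the forth argument for $\mathrm{MH}\subseteq\mathrm{HH}$ immediate; the induction on collapse defect you propose is correct but unnecessary once you pass straight to a transversal.

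For the ``moreover'' clause your outline is also correct. Once $\mathrm{MH}=\mathrm{HH}$ is known, Lemma~\ref{lem:disccliques} shows that every disconnected IH-homogeneous (hence HH-homogeneous) graph is $I_n[K_m]$ with $\max\{m,n\}=\omega$, after which one only has to check the families $I_\omega[K_n]$, $I_n[K_\omega]$, $I_\omega[K_\omega]$ individually. For connected graphs, Theorem~\ref{thm:2facts} (infinitely many cones over any finite set that has one) supplies exactly the extra room needed to keep the forth construction injective, yielding $\mathrm{MM}$.
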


These two facts from Cameron-Ne\v set\v ril and Rusinov-Schweitzer will be used later.

\begin{theorem}\label{thm:2facts}~
\begin{enumerate}
\item{If $G$ is an infinite connected HH-homogeneous graph, then $G$ does not contain finite maximal cliques; in particular, every vertex has infinite degree.}
\item{If $G$ is an infinite connected HH-homogeneous graph, $H\subset G$ is finite and $c\in G$ is a cone over $H$, then there are infinitely many cones over $H$ in $G$.}\label{fact2}
\end{enumerate}
\end{theorem}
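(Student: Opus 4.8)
The plan is to prove both parts of Theorem~\ref{thm:2facts} using the extension properties built into HH-homogeneity together with the connectedness hypothesis. Let me sketch each part.

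For part (1), suppose toward a contradiction that $G$ is an infinite connected HH-homogeneous graph containing a finite maximal clique $K$. The key idea is to exploit the tension between connectedness (which forces vertices adjacent to $K$) and maximality (which forbids a common neighbour of all of $K$), by cooking up a local homomorphism that cannot extend. Concretely, since $G$ is infinite and connected, I would first argue that there is some vertex $w$ at distance $1$ from $K$, i.e. adjacent to some but, by maximality, not all of $K$. The plan is then to map $K$ onto a clique and use a local homomorphism that collapses or relocates $K$ in a way that demands the existence of a vertex adjacent to all images — a cone over the image clique — and then derive a contradiction with maximality. A clean route: take a vertex $v \notin K$ adjacent to at least one vertex of $K$; consider the finite subgraph on $K \cup \{v\}$, and define a local homomorphism sending this configuration into $G$ so that its image forces, upon extension to an endomorphism $f\colon G \to G$, the existence of a vertex dominating all of $f(K)$; transporting back via the endomorphism contradicts $K$ being maximal. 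The main obstacle here is choosing the local homomorphism so that HH-homogeneity genuinely yields a cone over the full clique; I expect the cleanest argument identifies a finite clique together with an external neighbour and uses a folding map that sends the neighbour onto a clique vertex, forcing the extended endomorphism to supply a common neighbour.

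For part (2), assume $H \subset G$ is finite, $c$ is a cone over $H$ (adjacent to every vertex of $H$), and suppose for contradiction that there are only finitely many cones $c = c_1, \dots, c_k$ over $H$. The natural strategy is a \emph{shifting} or \emph{pigeonhole} argument: I would build a local homomorphism that fixes $H$ pointwise but moves one cone to a vertex that is forced to be a new cone, manufacturing more cones than exist. Specifically, consider the finite subgraph on $H \cup \{c_1\}$ and a local homomorphism $\phi$ that sends $H$ identically to $H$ and $c_1$ to a vertex not among $c_1, \dots, c_k$; HH-homogeneity extends $\phi$ to an endomorphism $f$ of $G$. Since $f$ fixes $H$ pointwise and each $f(c_i)$ must remain adjacent to all of $H$ (as edges are preserved under homomorphisms), every $f(c_i)$ is again a cone over $H$, so $f$ maps the finite set of cones into itself; but arranging $f(c_1) \notin \{c_1,\dots,c_k\}$ would contradict finiteness, giving the result.

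The delicate point in part (2) is ensuring that such a local homomorphism $\phi$ actually \emph{exists} as a homomorphism between induced subgraphs — i.e. that we can send $c_1$ somewhere genuinely new while respecting adjacencies. Here part (1) is the crucial lever: by part (1) every vertex of the connected graph $G$ has infinite degree, and more importantly the finite subgraph $H$ together with its cone embeds flexibly. The plan is to use infinitude of $G$ and the extension axioms implicit in HH-homogeneity to relocate a single cone vertex to a fresh vertex while keeping $H$ fixed; the existence of a suitable target is where the infinite degree from part~(1), combined with connectedness, does the work. I expect this existence step — producing a legitimate local homomorphism whose extension yields a genuinely new cone — to be the main obstacle, and I would handle it by carefully specifying the domain subgraph (a clique or near-clique containing $H$ and $c$) so that the required map is manifestly homomorphism-respecting.
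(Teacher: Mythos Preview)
The paper does not actually prove Theorem~\ref{thm:2facts}; it is quoted as a known fact from Cameron--Ne\v{s}et\v{r}il and Rusinov--Schweitzer, with no argument given. So there is no in-paper proof to compare against, and the relevant question is simply whether your sketch is correct.

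Your plan for part~(2) has a genuine gap. You propose a local homomorphism $\phi$ that fixes $H$ pointwise and sends the cone $c_1$ to some vertex outside the finite set $\{c_1,\dots,c_k\}$ of cones over $H$. But no such homomorphism exists: since $c_1\sim h$ for every $h\in H$ and $\phi$ fixes $H$, any homomorphic image of $c_1$ must be adjacent to all of $H$, i.e.\ must itself be a cone over $H$ and hence lie in $\{c_1,\dots,c_k\}$. You flag this step as ``delicate'' and hope infinite degree from part~(1) supplies a target, but infinite degree is irrelevant here --- the obstruction is that the edge-preservation condition forces $\phi(c_1)$ back into the cone set. The standard fix reverses the roles: pick any $d\sim c$ with $d\notin H$ (infinite degree gives this), and define $\phi$ on $H\cup\{d\}$ by fixing $H$ and sending $d\mapsto c$. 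This \emph{is} a homomorphism (since $c$ dominates $H$), and for any extension $F$ the vertex $F(c)$ is adjacent to every $F(h)=h$ and to $F(d)=c$, so $F(c)$ is a cone over $H\cup\{c\}$. Iterating yields an infinite clique of cones.

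Your plan for part~(1) is too vague to succeed as written. The specific move you describe --- folding the external neighbour $v$ onto a clique vertex while fixing $K$ --- produces an endomorphism that is the identity on $K$, which yields no new information about cones over $K$. One clean route is an induction on clique size using exactly the ``cone-over-a-cone'' step above: a $1$-clique has a cone by connectedness, and if every $n$-clique has a cone then for an $(n{+}1)$-clique $K$ one finds a cone $c$ over an $n$-subclique, so $K'=\{c\}\cup(K\setminus\{k\})$ is an $(n{+}1)$-clique, and the cone-over-a-cone argument (applied with $d$ a neighbour of $c$ outside $K'$) gives a cone over $K'$; transporting along the isomorphism $K'\to K$ via HH-homogeneity then gives a cone over $K$.
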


Finally, we paraphrase a result about disconnected HH-homogeneous graphs:

\begin{theorem}[Cameron-\Jarik 2006]\label{thm:iasonhh}
All disconnected IH-homogeneous graphs are HH-homogeneous.
\end{theorem}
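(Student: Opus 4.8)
The plan is to prove a structural dichotomy rather than to juggle an arbitrary homomorphism by hand. First I would show that every disconnected IH-homogeneous graph is a disjoint union of complete graphs; then I would verify directly that any such graph is HH-homogeneous. Essentially all of the work is in the first step, after which extending an arbitrary local homomorphism is routine bookkeeping.

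For the structural step, suppose $G$ is disconnected and IH-homogeneous, and assume for contradiction that some component of $G$ is not complete. A connected, non-complete graph contains two vertices at distance exactly two, so I can pick non-adjacent $s_1,s_2$ in one component with a common neighbour $c$ (that is, $c\sim s_1$ and $c\sim s_2$). Since $G$ is disconnected I can also pick $u_1,u_2$ in two distinct components; as these are non-adjacent, both $\{s_1,s_2\}$ and $\{u_1,u_2\}$ are $2$-element independent sets, and the map $s_i\mapsto u_i$ is an isomorphism between finite subgraphs. By IH-homogeneity it extends to an endomorphism $\hat f\colon G\to G$, and then $\hat f(c)$ is adjacent to both $u_1=\hat f(s_1)$ and $u_2=\hat f(s_2)$; but vertices in distinct components have no common neighbour, a contradiction. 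Hence every component of $G$ is a clique. I expect this to be the crux of the whole proof: the single nontrivial idea is to test IH-homogeneity on an isomorphism between an independent pair realised \emph{inside} one component and an independent pair realised \emph{across} components, which is exactly the configuration that disconnectedness makes available and that no endomorphism can honour.

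It remains to handle a disjoint union of cliques. A second use of the same obstruction pins down the sizes: if two components had different cardinalities $n<m$, then an isomorphism from an $n$-element sub-clique of the larger component onto the whole smaller component could not extend, since its extension would have to carry the clique of size $m$ into a component of size $n$. Thus all components share one size and $G\cong I_\kappa[K_n]$, in agreement with the disconnected cases of Theorem~\ref{thm:lw}. Finally, given any homomorphism $f$ from a finite subgraph $A$, each connected component of $A$ is a clique lying in a single component of $G$, and distinct components of $A$ lie in distinct components of $G$; moreover $f$ sends each such clique injectively onto a clique inside some component of $G$. Because all components of $G$ have the same size, each of these partial injections extends to an embedding of its whole source component into the corresponding target component, and the components of $G$ disjoint from $A$ can be embedded into $G$ arbitrarily. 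Assembling these maps gives an endomorphism extending $f$: there are no edges between distinct components, so nothing obstructs the homomorphism property. This shows $G$ is HH-homogeneous.
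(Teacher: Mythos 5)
Your proof is correct. The paper states this theorem as a citation of Cameron--Ne\v set\v ril and gives no proof of it directly, but the structural dichotomy you establish is exactly its Lemma~\ref{lem:disccliques}, proved there by the same key obstruction (a local isomorphism carrying a nonedge realised inside one component to a nonedge realised across two components, whose common neighbour then has nowhere to go), with the Cantor--Bernstein theorem in place of your direct counting argument for the equality of component sizes; your concluding verification that a disjoint union of equally sized cliques is HH-homogeneous is the routine part and is carried out correctly.
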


The following two properties will appear in several sections in this work. We can trace the first one to the original Cameron-\Jarik paper, while the second one comes from \cite{ColemanEvansGray:2019}. One may think of them as the positive (the edge relation appears without negations) and negative parts of the extension axioms for the Rado graph.

\begin{definition}
A graph $G$ has property \trg if for every finite $H\subset G$ there exists $v\in G$ with $v\sim h$ for all $h\in H$. Such $v$ will be called a \emph{cone} over $H$.
\end{definition}

\begin{definition}
A graph $G$ has property \ctrg if for every finite $H\subset G$ there exists $v\in G\setminus H$ such that for all $h\in H$ the pair $\{v,h\}$ is a nonedge in $G$. Equivalently, $G$ satisfies \ctrg iff $\overline G$ satisfies \trg. A vertex $v\notin X$ such that for all $x\in X$ we have $v\not\sim x$ will be called a \emph{co-cone} over $X$.
\end{definition}

The relevance of these two properties is that \trg was observed to imply HH-homogeneity in \cite{CameronNesetril:2006}, and it can be used to find images for a given vertex when one wishes to extend the domain of a local homomorphism. Similarly, \ctrg can be used to find preimages when one is trying to extend the image of a homomorphism, and is associated with HE-homogeneity. Later in this paper, we will use weakened versions of these two properties to establish ME=HE for general countable graphs and $ME=HE=MB$ for connected countable graphs.

The following proposition is an amalgamation of results from \cite{ColemanEvansGray:2019} and \cite{CameronNesetril:2006} linking the properties above to the Rado graph.
\begin{proposition}
Let $G$ be a countable graph. Then 
\begin{enumerate}
\item{$G$ satisfies \trg iff there exists a bijective homomorphism $\mathcal R\to G$, and}
\item{$G$ satisfies \trg and \ctrg\ iff it is bimorphism equivalent to $\mathcal R$.}
\end{enumerate}
\end{proposition}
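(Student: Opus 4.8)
The plan is to treat each biconditional as two back-and-forth constructions, and to dispose of the two easy directions first. For the backward direction of (1), suppose $f\colon\mathcal R\to G$ is a bijective homomorphism and let $H\subset G$ be finite. Put $A=f^{-1}(H)$ and apply the extension axioms of $\mathcal R$ with empty negative part to get $w\in\mathcal R$ adjacent to every element of $A$; since $f$ is an edge-preserving bijection, $f(w)$ is adjacent to every element of $H$ and, being adjacent to them, differs from all of them, so $f(w)$ witnesses \trg. For the backward direction of (2) I argue symmetrically: a bijective homomorphism $\psi\colon G\to\mathcal R$ lets me pull back the \emph{negative} extension axiom of $\mathcal R$. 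Given finite $H\subset G$, choose (among the infinitely many witnesses) a vertex $w\in\mathcal R\setminus\psi(H)$ with $w\not\sim\psi(h)$ for all $h\in H$; because $\psi$ preserves edges, its preimage $\psi^{-1}(w)$ lies outside $H$ and is non-adjacent to all of $H$, hence is a co-cone, giving \ctrg. Property \trg then follows from the other bijective homomorphism $\mathcal R\to G$ by part (1).

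The substance is the forward direction of (1): from \trg I must build a bijective homomorphism $f\colon\mathcal R\to G$. I would enumerate $\mathcal R$ and $G$ and construct an increasing chain of finite partial edge-preserving injections, alternating two kinds of step. In a \emph{forth} step, to bring the next vertex $a$ of $\mathcal R$ into the domain, I apply \trg to the current finite image $\img{f}$ to obtain a cone $c$ over all of it; then $c$ is adjacent to the images of all current neighbours of $a$, so setting $a\mapsto c$ preserves edges, and $c\notin\img{f}$ automatically because a cone over a set cannot belong to that set (looplessness). In a \emph{back} step, to force the next vertex $g$ of $G$ into the image, I use the full extension axioms of $\mathcal R$ to find an unused vertex $r$ adjacent, within the current domain, to exactly the $f$-preimages of the neighbours of $g$; then $r\mapsto g$ preserves edges and keeps $f$ injective. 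The union of the chain is the desired bijective homomorphism.

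For the forward direction of (2) I would run the analogous construction in the other direction to produce a bijective homomorphism $G\to\mathcal R$, now using the extension axioms of $\mathcal R$ for the forth steps (choosing $\mathcal R$-images with the prescribed adjacencies) and \ctrg for the back steps: to supply a preimage for the next vertex $r$ of $\mathcal R$, take a co-cone $d$ over the entire current domain via \ctrg, which is a new vertex non-adjacent to everything already used, so $d\mapsto r$ is \emph{vacuously} edge-preserving and injective. Combined with the bijective homomorphism from part (1), these two maps witness bimorphism-equivalence with $\mathcal R$.

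The main point to get right—and the only genuine obstacle—is the asymmetry of homomorphisms: edges must be preserved, but non-edges are unconstrained. This dictates the role of each property. Property \trg produces, for any finite set, a $G$-vertex adjacent to all of it, which is exactly what is needed to choose an image realizing every required edge at once (the forth steps of $\mathcal R\to G$), while \ctrg produces a $G$-vertex non-adjacent to all of a finite set, exactly what is needed to choose a preimage that creates no forbidden edge (the back steps of $G\to\mathcal R$). In both cases injectivity comes for free from looplessness, since such a cone or co-cone over a finite set can never belong to that set, and the rich extension axioms of $\mathcal R$ handle the complementary steps on the $\mathcal R$ side.
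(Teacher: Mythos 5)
Your proof is correct: the paper states this proposition without proof, citing Cameron--Ne\v set\v ril and Coleman--Evans--Gray, and your back-and-forth constructions (cones from \trg for the forth steps of $\mathcal R\to G$, co-cones from \ctrg for the back steps of $G\to\mathcal R$, and the extension axioms of $\mathcal R$ for the complementary steps) are exactly the standard arguments from those sources. The only point worth making explicit is that the extension-axiom witness must be chosen outside the finite set already used, which you rightly justify by the existence of infinitely many witnesses.
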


The first examples of HH-homogeneous graphs that do not satisfy \trg were found by Rusinov and Schweitzer (\cite{RusinovSchweitzer:2010}). They can be described as the graphs $\mathrm{RS}(n)$ ($n\geq2$) with vertex set $\N$ and edge set $$\{\{k,t\}:k\geq n\wedge t\geq n\wedge k\neq t\}\cup\{\{k,t\}:k\geq n\wedge t\leq n-1\wedge k\not\equiv t\mod{n}\}.$$ We refer to these graphs as \emph{the Rusinov-Schweitzer examples.}

Throughout the paper, we will make implicit use of the following observations. 

\begin{observation}
A countable structure $M$ is XY-homogeneous if and only if for all finite \emph{surjective} X-morphisms $f\colon A\to B$ there exists an endomorphism of type $Y$ such that $F_A=f$.
\end{observation}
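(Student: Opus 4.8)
The plan is to prove both directions, with essentially all the content residing in the backward implication. The forward direction is immediate: if $M$ is XY-homogeneous, then by definition every X-morphism between finite substructures extends to an endomorphism of type Y, and surjective X-morphisms are a special case, so they extend as well.

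For the converse, I would assume that every finite \emph{surjective} X-morphism extends to an endomorphism of type Y, and take an arbitrary finite X-morphism $f\colon A\to B$. The key move is to shrink the nominal codomain down to the actual image. Let $C$ be the induced subgraph of $M$ on the vertex set $\img{f}$, and regard $f$ as a map $g\colon A\to C$. By construction $g$ is surjective, so the real task is to confirm that $g$ is still an X-morphism of the \emph{same} type as $f$. The edges of $A$ are sent by $f$ to edges of $B$ whose endpoints both lie in $\img{f}$; since $C$ is the \emph{induced} subgraph on $\img{f}$, these are edges of $C$, so $g$ is a homomorphism. If $f$ is injective then so is $g$, giving a monomorphism; and if $f$ is an isomorphism then $B=\img{f}$ already, so $g=f$ and nothing changes. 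Thus in each of the three cases $g$ is a surjective X-morphism of the correct type.

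Applying the hypothesis to $g$ produces an endomorphism $F$ of type Y with $F_A=g$. Since $g$ and $f$ are literally the same function from $A$ into $M$—only the bookkeeping codomain differs—we have $F_A=f$, so $F$ extends the original morphism $f$. As $f$ was arbitrary, $M$ is XY-homogeneous. The only step requiring genuine attention is the type-preservation check when passing to the image, and even there the subtlety is mild: the notion of an endomorphism \emph{extending} $f$ depends only on the values of $F$ on $A$ inside $M$, so the relabeling of the codomain is harmless once one has verified that restricting to the induced image subgraph keeps the morphism within the same class X.
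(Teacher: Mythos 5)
Your proof is correct, and the codomain-restriction argument (pass to the induced substructure on $\mathrm{im}(f)$, check the morphism stays in class X, then note that extension only depends on the values of $F$ on $A$) is exactly the routine verification the paper intends; the paper states this as an observation without proof. No gaps.
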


\begin{observation}\label{obs:hn}
The universal homogeneous $K_n$-free graph ($n\geq3$) belongs to all classes of the form IY, but not to any other morphism-extension class.
\end{observation}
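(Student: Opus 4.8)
The plan is to prove the two halves separately, in each case reducing to a single extremal class rather than checking all eighteen. For the positive half, I would note that $\mathcal{H}_n$, being a \Fraisse limit, is ultrahomogeneous, i.e.\ IA-homogeneous; since an automorphism is simultaneously a self-embedding, a bimorphism, a monomorphism, an epimorphism and a homomorphism, the class IA is contained in every class of the form IY. Hence membership in IA already yields membership in all six IY-classes, with no further work.

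For the negative half, the key observation is that among the twelve classes of the form MY and HY there is a largest one, namely MH. Indeed, reading off the inclusions from Figure~\ref{fig:ctblestrs}, widening the domain type gives $\mathrm{HY}\subseteq\mathrm{MY}$ for each fixed Y, while weakening the target type to H gives $\mathrm{XA}\subseteq\cdots\subseteq\mathrm{XH}$ for each fixed X; combining these, every MY and every HY is contained in MH. Thus it suffices to show $\mathcal{H}_n\notin\mathrm{MH}$, and then all remaining eleven non-memberships follow at once from the partial order.

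To show $\mathcal{H}_n\notin\mathrm{MH}$ I would invoke Rusinov--Schweitzer (Theorem~\ref{thm:rs}), which gives $\mathrm{MH}=\mathrm{HH}$, and instead rule out HH-homogeneity. Here Theorem~\ref{thm:2facts}(1) does the work: an infinite connected HH-homogeneous graph has no finite maximal clique. But $\mathcal{H}_n$ is countably infinite and connected (two vertices are either adjacent or form an independent pair, which is $K_{n-1}$-free for $n\geq3$ and hence admits a common neighbour by the extension axioms), while it contains an induced $K_{n-1}$, which is a \emph{maximal} clique because any common neighbour of its $n-1$ vertices would create a forbidden $K_n$. This finite maximal clique contradicts Theorem~\ref{thm:2facts}(1), so $\mathcal{H}_n$ is not HH-homogeneous, hence not MH-homogeneous.

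The bulk of the argument is bookkeeping in the poset, so the main obstacle is conceptual rather than technical: correctly identifying IA as the minimum of the IY-classes and MH as the maximum of the MY/HY-classes, so that exactly two classes need to be examined. The only genuine verifications are that $\mathcal{H}_n$ is connected and that $K_{n-1}$ is a finite maximal clique, both of which follow immediately from the extension axioms and $K_n$-freeness.
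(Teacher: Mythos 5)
Your proof is correct, and the positive half coincides with the paper's (IA-homogeneity of the \Fraisse limit plus $\mathrm{IA}\subseteq\mathrm{IY}$ for every Y). For the negative half you take a genuinely different route. The paper also reduces to the M-row (via $\mathrm{HY}\subseteq\mathrm{MY}$), but instead of passing to the maximal class MH and invoking heavy machinery, it exhibits one concrete obstruction that kills every MY at once: by the extension axioms each nonedge of $\mathcal{H}_n$ lies in a copy of $K_n^-$ (the graph on $n$ vertices missing a single edge), so the monomorphism sending a nonedge to an edge extends to no endomorphism whatsoever --- the images of the remaining $n-2$ vertices of the $K_n^-$ together with the image edge would form a forbidden $K_n$. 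Your argument instead identifies MH as the top of the twelve MY/HY classes, applies Rusinov--Schweitzer (Theorem~\ref{thm:rs}) to replace MH by HH, and then uses Theorem~\ref{thm:2facts}(1) together with the observations that $\mathcal{H}_n$ is connected and that $K_{n-1}$ is a finite maximal clique. Both are valid and non-circular (the cited theorems are external results stated before the observation); the paper's version is more elementary and self-contained, while yours buys brevity at the cost of leaning on the $\mathrm{MH}=\mathrm{HH}$ theorem and a structural fact about HH-homogeneous graphs, and it has the mild virtue of isolating exactly which two classes in the poset actually need to be checked.
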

\begin{proof}
The first statement follows from the facth that $\mathcal{H}_n$ is IA-homogeneous, and IA is contained in all other IY-homogeneity classes.

To see that $\mathcal{H}_n$ is not XY-homogeneous if X$\neq$I, it suffices to see that it is not MY-homogeneous for any Y. By the extension axioms of $\mathcal{H}_n$, each nonedge of $\mathcal{H}_n$ is contained in a copy of $K_n^-$, the graph on $n$ vertices with ${n}\choose{2}-1$ edges. Therefore, the monomorphism mapping a nonedge to an edge is not restriction of any endomorphism of $\mathcal{H}_n$.
\end{proof}

\begin{theorem}
The following inclusions are proper in the class of countably infinite graphs.
\begin{enumerate}
\item{$\mathrm{IA=II\subset IB}$,}
\item{$\mathrm{MB\subset IB}$,}
\item{$\mathrm{MB\subset MM}$.}
\end{enumerate}
\end{theorem}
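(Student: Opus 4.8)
The three displayed inclusions are themselves immediate from the definitions: every finite isomorphism is a finite monomorphism, so $\mathrm{MB}\subseteq\mathrm{IB}$ and $\mathrm{IA}\subseteq\mathrm{IB}$; and every automorphism and every bimorphism is a monomorphism, so $\mathrm{IA}\subseteq\mathrm{II}$ and $\mathrm{MB}\subseteq\mathrm{MM}$. The actual work is therefore (a) the equality $\mathrm{IA}=\mathrm{II}$ and (b) exhibiting, in each case, a graph witnessing strictness.

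For $\mathrm{IA}=\mathrm{II}$, the inclusion $\mathrm{IA}\subseteq\mathrm{II}$ is trivial (automorphisms are self-embeddings), so I would prove $\mathrm{II}\subseteq\mathrm{IA}$ by a back-and-forth argument whose only real content is that $\mathrm{II}$-homogeneity is self-dual. Let $G$ be $\mathrm{II}$-homogeneous, let $p\colon A\to B$ be a finite partial isomorphism, enumerate $V(G)$, and build an increasing chain $p=p_0\subseteq p_1\subseteq\cdots$ of finite partial isomorphisms. In a forth step I apply $\mathrm{II}$-homogeneity to $p_k$ to obtain a self-embedding $\varphi$ extending $p_k$; then $p_k\cup\{(v,\varphi(v))\}$ is again a finite partial isomorphism for any $v\notin\dom{p_k}$, which forces any prescribed vertex into the domain. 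In a back step I apply the same to the finite partial isomorphism $p_k^{-1}$ to force any prescribed vertex into the range. The union is a bijective, edge-preserving and edge-reflecting map, i.e. an automorphism extending $p$, so $G\in\mathrm{IA}$.

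For the strictness $\mathrm{IA}=\mathrm{II}\subsetneq\mathrm{IB}$ the separating graph is $\mathcal R[K_2]$, the lexicographic product replacing each vertex of the Rado graph by an edge. First, $\mathcal R[K_2]$ satisfies \trg and \ctrg, with infinitely many cones and co-cones over every finite set: a (co-)cone over a finite $S$ is $(r,0)$ for a (co-)cone $r$ over the projection of $S$ to $\mathcal R$ chosen outside that projection, and $\mathcal R$ has infinitely many such $r$ by its extension axioms. Second, $\mathcal R[K_2]$ is $\mathrm{MB}$-homogeneous: given a finite monomorphism I run a back-and-forth building a bijective monomorphism, using \trg in the forth steps to pick a fresh common neighbour of the images of the already-placed neighbours of a new domain vertex, and \ctrg in the back steps to pick, for a target $t$, a fresh preimage non-adjacent to every domain vertex whose image is a non-neighbour of $t$; since monomorphisms constrain only edges, these choices keep the map an injective homomorphism, and the bookkeeping makes it bijective. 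Hence $\mathcal R[K_2]\in\mathrm{MB}\subseteq\mathrm{IB}$. Third, $\mathcal R[K_2]\notin\mathrm{IA}$: the two vertices of a fibre $\{r\}\times K_2$ are adjacent twins, while $(r,0)$ and a neighbour $(s,0)$ with $r\sim s$ are not twins, so the partial isomorphism between these two edges does not extend to an automorphism. I expect the verification of $\mathrm{MB}$-homogeneity to be the main obstacle, as one must check that the forth/back construction genuinely yields a bijection.

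For $\mathrm{MB}\subsetneq\mathrm{IB}$, the graph $\mathcal H_n$ ($n\geq3$) lies in $\mathrm{IA}\subseteq\mathrm{IB}$ but, by Observation~\ref{obs:hn}, in no class of the form $\mathrm{MY}$, so $\mathcal H_n\notin\mathrm{MB}$. For $\mathrm{MB}\subsetneq\mathrm{MM}$ I use a Rusinov--Schweitzer example $\mathrm{RS}(n)$: it is connected and $\mathrm{HH}$-homogeneous, hence $\mathrm{MM}$-homogeneous by Theorem~\ref{thm:rs}. To see $\mathrm{RS}(n)\notin\mathrm{MB}$, suppose otherwise; by Theorem~\ref{thm:ah}, $\mathrm{RS}(n)$ or its complement is bimorphism-equivalent to $K_\omega$, $I_\omega[K_\omega]$, or $\mathcal R$. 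Both $\mathrm{RS}(n)$ and its complement are connected and non-complete, which rules out $I_\omega[K_\omega]$ and $K_\omega$ because connectivity and completeness are bimorphism-equivalence invariants. A bijective homomorphism can only lower codegrees and only raise degrees; since some vertex of $\mathrm{RS}(n)$ has codegree $1$ whereas every vertex of $\mathcal R$ has infinite codegree, there is no bijective homomorphism $\mathrm{RS}(n)\to\mathcal R$, and dually, since some vertex of $\overline{\mathrm{RS}(n)}$ has degree $1$ whereas $\mathcal R$ has infinite minimum degree, there is no bijective homomorphism $\mathcal R\to\overline{\mathrm{RS}(n)}$. Thus neither graph is bimorphism-equivalent to $\mathcal R$, a contradiction, so $\mathrm{RS}(n)\in\mathrm{MM}\setminus\mathrm{MB}$. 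The delicate point here is excluding $\mathcal R$ via the degree/codegree monotonicity of bijective homomorphisms.
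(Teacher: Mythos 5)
Your proof is correct, but it is substantially more self-contained than the paper's, which at several points leans on citations. For $\mathrm{IA}=\mathrm{II}$ the paper simply invokes Lemma~1.1 of \cite{LockettTruss:2014}; your back-and-forth (applying II-homogeneity to $p_k$ in forth steps and to $p_k^{-1}$ in back steps) is exactly the standard proof of that lemma and is sound. For $\mathrm{IA}\subsetneq\mathrm{IB}$ the paper takes an arbitrary graph bimorphism-equivalent to $\mathcal R$ but not isomorphic to it (citing \cite{ColemanEvansGray:2019} for existence and implicitly for MB-homogeneity, plus Theorem~\ref{thm:aa} or Theorem~\ref{thm:lw} for non-membership in IA); you instead exhibit the concrete witness $\mathcal R[K_2]$, verify \trg and \ctrg with infinitely many (co-)cones, and run the forth/back construction for MB-homogeneity directly --- in effect reproving the Coleman--Evans--Gray fact that \trg together with \ctrg implies MB-homogeneity --- and then rule out IA via the adjacent-twins invariant rather than via the Lachlan--Woodrow list. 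Both steps check out (in the back step one only needs a co-cone over the $f$-preimage of the non-neighbours of the target, which \ctrg with infinite multiplicity supplies). For $\mathrm{MB}\subsetneq\mathrm{IB}$ you and the paper use the same witness $\mathcal H_n$ via Observation~\ref{obs:hn}. For $\mathrm{MB}\subsetneq\mathrm{MM}$ both use $\mathrm{RS}(n)$, but where the paper just cites Theorem~\ref{thm:ah} (elsewhere noting that $\mathrm{RS}(n)$ has finite independence number at least $2$), you carry out the exclusion of all six cases of the catalogue explicitly; your degree/codegree monotonicity argument for bijective homomorphisms is correct and arguably cleaner for the complement case, where the paper's independence-number invariant alone does not suffice. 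The trade-off: the paper's proof is shorter and delegates to known classification results, while yours is longer but verifiable without consulting \cite{ColemanEvansGray:2019} or \cite{LockettTruss:2014}.
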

\begin{proof}
The first two inequalities follow from Theorems \ref{thm:lw} and \ref{thm:ah}.

Any graph that is bimorphism-equivalent to $\mathcal R$, but not isomorphic to it, is IB-homogeneous but not II=IA-homogeneous.

The universal homogeneous $K_n$-free graph ($n\geq 3$) is IB-homogeneous, but not MB-homogeneous. See section 3.1 of \cite{ColemanEvansGray:2019} for a construction that yields uncountably many pairwise non-isomorphic such graphs.

The Rusinov-Schweitzer examples are MM-homogeneous by Theorem \ref{thm:rs}, but not MB-homogeneous by Theorem \ref{thm:ah}.
\end{proof}


\section{The bottom of the hierarchy}\label{sec:bottom}

We start by proving that the disconnected elements of IH are disjoint unions of cliques. The argument is the same as in \cite{CameronNesetril:2006}, but with relaxed hypotheses.

A connected component of a graph is \emph{nontrivial} if it contains at least two vertices.

\begin{lemma}\label{lem:disccliques}
Let $G$ be a countably infinite disconnected IH-homogeneous graph. Then there exist $n,m\in\omega+1$ with $\max\{m,n\}=\omega$ such that $G\cong I_n[K_m]$.
\end{lemma}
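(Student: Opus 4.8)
The plan is to show that $G$ is a disjoint union of pairwise isomorphic cliques; the description $I_n[K_m]$ then records the number $n$ of components and their common size $m$, and the constraint $\max\{m,n\}=\omega$ will be forced by $|G|=\omega$. I would organize the argument into two claims: first, that every connected component of $G$ is a clique, and second, that any two components have the same cardinality.

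For the first claim, I would suppose toward a contradiction that some component is connected but not complete. Then it contains an induced path $a\sim b\sim c$ with $a\not\sim c$. Since $G$ is disconnected, I fix a vertex $d$ in a component different from that of $a,b,c$; then $a\not\sim d$. Both $\{a,c\}$ and $\{a,d\}$ induce nonedges, so the map $f$ with $f(a)=a$ and $f(c)=d$ is an isomorphism between finite subgraphs. By IH-homogeneity it extends to an endomorphism $F$ of $G$. Since $b\sim a$, $b\sim c$, and $F$ is a homomorphism, we get $F(b)\sim F(a)=a$ and $F(b)\sim F(c)=d$; but then $F(b)$ lies in the component of $a$ and in the component of $d$ simultaneously, contradicting that these are distinct. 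Hence every component is a clique, and since $G$ is countable each is isomorphic to $K_m$ for some $m\in\omega+1$.

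For the second claim, I would show that a component of finite size $p$ rules out any strictly larger component. Suppose one component equals $K_p$ with $p$ finite (vertices $a_1,\dots,a_p$), while another component contains a clique $\{b_1,\dots,b_{p+1}\}$. The bijection $b_i\mapsto a_i$ for $1\le i\le p$ is an isomorphism of $K_p$'s, hence extends to an endomorphism $F$. Because $b_{p+1}\sim b_i$ for all $i\le p$, we obtain $F(b_{p+1})\sim a_i$ for every $i$; but a vertex adjacent to all of $a_1,\dots,a_p$ must lie in their component and be distinct from each $a_i$ (no loops), which is impossible since that component is exactly $\{a_1,\dots,a_p\}$. Applying this symmetrically shows all finite components share a single size, and that no infinite component can coexist with a finite one; since $G$ is countable, the only remaining possibility is that every component is isomorphic to $K_\omega$. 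In either case all components are isomorphic to one fixed $K_m$.

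Finally, letting $n\in\omega+1$ be the number of components (so $n\ge 2$, as $G$ is disconnected) and $K_m$ their common type, I conclude $G\cong I_n[K_m]$, and $|G|=\omega$ forces $\max\{m,n\}=\omega$. I expect the main obstacle to be bookkeeping in the second claim rather than any deep idea: one must treat the finite-versus-finite and finite-versus-infinite cases uniformly and verify that the chosen cliques of size $p+1$ genuinely sit inside the larger component, which is precisely where countability and the explicit choice of $p+1$ vertices carry the argument.
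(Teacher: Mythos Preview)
Your proof is correct and follows essentially the same line as the paper for the first claim: both map a nonedge inside a component to a nonedge straddling two components and observe that the common neighbour has nowhere to go. For the second claim the paper takes a slightly slicker route: it extends the single-vertex isomorphisms $c_1\mapsto d_1$ and $d_1\mapsto c_1$; since each component is already known to be a clique, any endomorphism restricts to an injection on a component, and Cantor--Bernstein gives $|C|=|D|$ in one stroke, with no finite/infinite case split. Your direct argument with a $(p+1)$-clique mapped onto a $p$-vertex component is equally valid and perhaps more concrete; the trade-off is exactly the bookkeeping you flagged, which the Cantor--Bernstein version avoids.
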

\begin{proof}
The result is obvious if all the connected components of $G$ are trivial, as in that case the fact that $G$ is countable yields immediately $G\cong I_\omega\cong I_\omega[K_1]$. 

Suppose then that $C$ and $D$ are distinct connected components of $G$ and $C$ is nontrivial. 

Our first claim is that every connected component is nontrivial. Since $C$ is nontrivial, there exist $c_1,c_2\in C$ with $c_1\sim c_2$. Now let $v$ be any vertex in $G$, and consider the local isomorphism $c_1\mapsto v$. By IH-homogeneity, this map is restriction of an endomorphism $F$ of $G$, and it follows that $F(c_2)$ is a neighbour of $v$.

Now we claim that each connected component of $G$ is a clique. Since $G$ has a nontrivial component, all components are nontrivial. Suppose for a contradiction that $D$ is not a clique, so there exist $d_1,d_2,d_3\in D$ with $d_1\not\sim d_2$ and $d_3\sim d_1,d_2$. The local isomorphism $d_1\mapsto d_1, d_2\mapsto c_1$ is not restriction of any endomorphism of $G$ because $c_1$ and $d_1$ are in different connected components.

Finally, to establish that all connected components are of the same size, consider the local isomorphisms $f_1\colon c_1\mapsto d_1$ and $f_2\colon d_1\mapsto c_1$. An endomorphism extending $f_1$ restricts to an injection $C\to D$, and likewise an endomorphism extending $f_2$ restricts to an injection $D\to C$, and the result follows from the Cantor-Bernstein theorem.
\end{proof}

\begin{theorem}\label{thm:bottom}
In the class of countably infinite graphs,
\begin{enumerate}
\item{HM=HI=HB=HA=$\{K_\omega\}$.}\label{it:hmhi}
\item{MI=MA=$\{K_\omega,I_\omega\}$.}\label{it:mima}
\end{enumerate}
\end{theorem}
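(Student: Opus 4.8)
The plan is to prove both statements by a \emph{squeeze}: in each case I exhibit the claimed graph(s) as members of the smallest class in the relevant chain, show that the largest class in the chain contains nothing else, and then invoke the elementary inclusions among morphism-extension classes to collapse everything in between.

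For \ref{it:hmhi} the relevant chains are $\mathrm{HA}\subseteq\mathrm{HI}\subseteq\mathrm{HM}$ and $\mathrm{HA}\subseteq\mathrm{HB}\subseteq\mathrm{HM}$, since an automorphism is both a self-embedding and a bimorphism, and each of these is in particular a monomorphism. So it suffices to check $K_\omega\in\mathrm{HA}$ and $\mathrm{HM}\subseteq\{K_\omega\}$. The first is immediate: every local homomorphism of $K_\omega$ is automatically injective (in a clique all distinct vertices are adjacent, so they must have distinct images), and any injection of a finite subset of $\omega$ extends to a permutation, which is an automorphism of the complete graph. For the second I argue by contrapositive: if a countable $G$ is not complete, I pick a nonedge $\{u,v\}$ and consider the local homomorphism $f$ that collapses both $u$ and $v$ to a single vertex $w$ of $G$; since $\{u,v\}$ carries no edge, $f$ is a genuine homomorphism, but being non-injective it cannot be the restriction of any monomorphism of $G$. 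Hence an HM-homogeneous countable graph has no nonedge and is therefore $K_\omega$.

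For \ref{it:mima} the chain is $\mathrm{MA}\subseteq\mathrm{MI}$, so I check $\{K_\omega,I_\omega\}\subseteq\mathrm{MA}$ and $\mathrm{MI}\subseteq\{K_\omega,I_\omega\}$. Membership is routine: in both $K_\omega$ and $I_\omega$ every local monomorphism is an injection between subgraphs on which all pairs have the same adjacency type, so it extends to a permutation of $\omega$, an automorphism. For the reverse inclusion I again argue by contradiction: if an MI-homogeneous $G$ contained both an edge $\{a,b\}$ and a nonedge $\{c,d\}$, then $c\mapsto a,\ d\mapsto b$ is a monomorphism (it is injective, and edge-preservation is vacuous on a nonedge), so by hypothesis it extends to a self-embedding $F$; but a self-embedding is an isomorphism onto its image and therefore preserves nonedges, contradicting $F(c)=a\sim b=F(d)$ while $c\not\sim d$. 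Thus $G$ has only edges or only nonedges, i.e.\ $G\in\{K_\omega,I_\omega\}$.

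The two obstructions are dual to one another and constitute the only real idea in the argument: for HM the gap between ``homomorphism'' and ``monomorphism'' is exactly non-injectivity, witnessed by folding a nonedge to a point; for MI the gap between ``monomorphism'' and ``self-embedding'' is exactly the freedom to send a nonedge to an edge. I do not anticipate any genuine obstacle here — the substance is simply to isolate these two minimal local morphisms and observe that no endomorphism of the required type can extend them, after which the inclusion chains do the rest.
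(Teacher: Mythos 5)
Your proof is correct and uses essentially the same two obstructions as the paper: folding a nonedge to a point to force completeness for HM, and mapping a nonedge onto an edge to rule out mixed graphs for MI. The only organizational difference is that you collapse HI=HA and MI=MA via the inclusion chains and a direct argument about self-embeddings, where the paper instead cites Lemma~1.1 of Lockett--Truss for those two identities; both routes are sound.
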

\begin{proof}
It is easy to verify that $K_\omega$ is in each of these classes. HI=HA and MI=MA are true for general countable structures (Lemma 1.1. in \cite{LockettTruss:2014}).

The equalities in the first item are proved as follows: if every homomorphism between finite substructures is restriction of an injective endomorphism of $G$, then it must be the case that all local homomorphisms are injective. From this it follows that $G$ is complete, and therefore isomorphic to $K_\omega$.

To prove $\mathrm{MA}=\{K_\omega,I_\omega\}$, note that an MA-homogeneous graph cannot contain both edges and nonedges, because the monomorphism mapping a nonedge to an edge is never restriction of an automorphism. It follows that an MA-homogeneous graph is complete or empty; the fact that the empty countable graph is MA-homogeneous is easy to verify.
\end{proof}

\begin{corollary}\label{cor:discineq}
The following inclusions are proper in the class of countably infinite graphs:
\begin{enumerate}
\item{$\mathrm{HI\subset MI}$,}
\item{$\mathrm{MI\subset II}$,}
\item{$\mathrm{MI\subset MB}$,}
\item{$\mathrm{MM\subset MH}$,}
\end{enumerate}
\end{corollary}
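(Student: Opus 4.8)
The plan is to establish each of the four proper inclusions by exhibiting a witness graph that lies in the larger class but not in the smaller one. Most of these separations should fall out immediately from the classification theorems just proved, namely Theorem~\ref{thm:bottom} (which pins down $\mathrm{HI}$, $\mathrm{MI}$, and $\mathrm{MA}$ exactly) together with the Lachlan--Woodrow and Aranda--Hartman theorems and the known examples. First I would record the inclusions themselves, which are all immediate from the definitions: a local isomorphism is a local monomorphism and a local monomorphism is a local homomorphism, so $\mathrm{HI}\subseteq\mathrm{MI}\subseteq\mathrm{II}$, and similarly self-embeddings and bimorphisms are both injective endomorphisms, giving $\mathrm{MI}\subseteq\mathrm{MB}$; the inclusion $\mathrm{MM}\subseteq\mathrm{MH}$ is the same kind of triviality since a monomorphism is a homomorphism. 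So the content is entirely in the properness.

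For the first three inequalities I would lean on Theorem~\ref{thm:bottom}, which gives $\mathrm{HI}=\{K_\omega\}$ and $\mathrm{MI}=\{K_\omega,I_\omega\}$. For $\mathrm{HI}\subset\mathrm{MI}$ the separating graph is simply $I_\omega$: it is in $\mathrm{MI}$ but is not $K_\omega$, hence not in $\mathrm{HI}$. For $\mathrm{MI}\subset\mathrm{II}$ I need a self-embedding-homogeneous graph distinct from both $K_\omega$ and $I_\omega$; the natural candidate is the Rado graph $\mathcal R$, which is ultrahomogeneous (IA, hence II) by the opening discussion, and is clearly neither complete nor empty. For $\mathrm{MI}\subset\mathrm{MB}$ I want something in $\mathrm{MB}\setminus\{K_\omega,I_\omega\}$; by the Aranda--Hartman classification (Theorem~\ref{thm:ah}) the graph $I_\omega[K_\omega]$ is $\mathrm{MB}$-homogeneous and it is neither complete nor empty, so it does the job.

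The one inequality that does not reduce to the bottom-of-the-hierarchy classifications is $\mathrm{MM}\subset\mathrm{MH}$, and this is where I expect the only real obstacle. I need a graph that is $\mathrm{MH}$-homogeneous but not $\mathrm{MM}$-homogeneous. Rusinov--Schweitzer (Theorem~\ref{thm:rs}) tell me exactly where to look: the only countable $\mathrm{MH}$-homogeneous graphs that fail to be $\mathrm{MM}$-homogeneous are those of the form $I_\omega[K_n]$ with finite $n\ge 2$. So the witness is $I_\omega[K_2]$ (or any $I_\omega[K_n]$, $n\ge2$), and the separation is handed to us by the ``moreover'' clause of Theorem~\ref{thm:rs}. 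The only thing I should double-check is that such a graph genuinely lies in $\mathrm{MH}$ and genuinely fails $\mathrm{MM}$ rather than merely being named in that theorem; the former follows since $\mathrm{MH}=\mathrm{HH}$ and $I_\omega[K_n]$ is easily seen to be $\mathrm{HH}$-homogeneous, and the latter is the explicit assertion that it is not $\mathrm{MM}$-homogeneous.

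In writing this up I would present the four items in a single short proof, citing Theorem~\ref{thm:bottom} for the first three and Theorem~\ref{thm:rs} for the last, and naming the witnesses $I_\omega$, $\mathcal R$, $I_\omega[K_\omega]$, and $I_\omega[K_2]$ respectively. The main subtlety, such as it is, is bookkeeping: making sure each named graph actually satisfies the stronger property and actually violates the weaker one, which for the first three follows from the \emph{exact} (not merely containment) descriptions in Theorem~\ref{thm:bottom}.
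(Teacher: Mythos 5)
Your proof is correct and follows essentially the same approach as the paper: exhibit a witness for each proper inclusion using Theorem~\ref{thm:bottom}, Theorem~\ref{thm:ah}, and the ``moreover'' clause of Theorem~\ref{thm:rs}, with the same witnesses $I_\omega$, $I_\omega[K_\omega]$, and $I_\omega[K_n]$ for items 1, 3, and 4. The only difference is cosmetic: for $\mathrm{MI\subset II}$ you use the Rado graph where the paper uses the universal homogeneous $K_n$-free graph (via Observation~\ref{obs:hn}); both are valid since each is II-homogeneous and neither is $K_\omega$ or $I_\omega$.
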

\begin{proof}~
\begin{enumerate}
\item{The infinite independent set is MI-homogeneous but not HI-homogeneous.}
\item{The universal homogeneous $K_n$-free graph is II-homogeneous but not MI-homogeneous (Observation \ref{obs:hn}).}
\item{$I_\omega[K_\omega]$ is MB-homogeneous (Theorem 4.11 in \cite{ColemanEvansGray:2019}) but not MI-homogeneous.}
\item{$I_\omega[K_n]$ is HH-homogeneous (Theorem \ref{thm:iasonhh}) but not MM-homogeneous (there are no injective endomorphisms extending the monomorphism that maps a nonedge to an edge).}
\end{enumerate}
\end{proof}

\section{Asterisk \& Obelisk in Homogenia}
In this subsection, we introduce two properties which we will use to establish ME=HE for general countable graphs and ME=HE=MB for countable connected graphs.

For graphs, \str and \dgr are  weaker than \trg and \ctrg, respectively. They are explicitly stated in terms of local homomorphisms (without reference to the language of graphs), so we define them for general structures.

\begin{definition}\label{def:str}
A structure $G$ satisfies \str if for every surjective finite monomorphism $f:A\to B$ and every $c\notin A$ there exists $d\notin B$ such that $f\cup\{(c,d)\}$ is a homomorphism.
\end{definition}

\begin{remark}\label{rmk:strmh}
Since we require $d\notin B$, Property \str guarantees the possibility of extending any local monomorphism as a monomorphism, and is equivalent to MM-homogeneity for countable structures.
\end{remark}

\begin{definition}
A structure $G$ satisfies Property \dgr if for all finite surjective homomorphisms $f:A\to B$ and $b\notin B$, there exists $a\notin A$ such that $f\cup\{(a,b)\}$ is a homomorphism.
\end{definition}

While Property \str is equivalent to MM-homogeneity under the hypothesis of denumerability, Property \dgr only implies that every local homomorphism is restriction of a surjective partial endomorphism.

\begin{proposition}\label{prop:HHctrg}
If $G$ is a countably infinite HH-homogeneous graph with property \dgr, then $G$ is HE-homogeneous.
\end{proposition}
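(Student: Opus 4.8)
The plan is to establish HE-homogeneity by one back-and-forth construction that extends a given local homomorphism to a \emph{surjective} endomorphism, letting HH-homogeneity drive the domain-filling (\emph{forth}) steps and property \dgr drive the image-filling (\emph{back}) steps. By the Observation that XY-homogeneity may be tested on finite \emph{surjective} X-morphisms, it suffices to show that every finite surjective homomorphism $f_0\colon A_0\to B_0$ extends to an epimorphism $F\colon G\to G$. Fix such an $f_0$ and enumerate the vertices of $G$ as $v_0,v_1,v_2,\dots$.

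First I would build an increasing chain $f_0\subseteq f_1\subseteq f_2\subseteq\cdots$ of finite partial homomorphisms by alternating two kinds of step. At a \emph{forth} step, choosing the least $v_n\notin\dom{f_k}$: since $f_k$ is a homomorphism between finite induced subgraphs, HH-homogeneity extends it to an endomorphism $F_k$ of $G$, and I set $f_{k+1}=(F_k)_{\dom{f_k}\cup\{v_n\}}$, so that $v_n$ enters the domain. At a \emph{back} step, choosing the least $v_n\notin\img{f_k}$: regarding $f_k$ as a finite surjective homomorphism onto $\img{f_k}$ and taking the target $b=v_n\notin\img{f_k}$, property \dgr furnishes a vertex $a\notin\dom{f_k}$ with $f_k\cup\{(a,v_n)\}$ a homomorphism, and I set $f_{k+1}=f_k\cup\{(a,v_n)\}$, so that $v_n$ enters the image.

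Both kinds of step visibly keep $f_{k+1}$ a finite partial homomorphism, and since every map is surjective onto its own image, \dgr is always applicable at a back step. Hence the construction never stalls: the forth steps force $\bigcup_k\dom{f_k}=G$ and the back steps force $\bigcup_k\img{f_k}=G$, so $F=\bigcup_k f_k$ is a well-defined homomorphism $G\to G$ that is onto, that is, an epimorphism extending $f_0$. This gives HE-homogeneity.

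The construction is standard and I expect no serious obstacle. The two genuine points are that HH-homogeneity is needed to grow the domain whereas \dgr is needed to grow the image, so both hypotheses are really used, and that the two directions do not interfere, the fresh preimages produced at back steps merely enlarging the domain. The mild subtlety worth stating explicitly is that \dgr is invoked by presenting $f_k$ as a surjection onto $\img{f_k}$ with the new target chosen outside that image; this matches the remark that \dgr alone only yields a surjective partial endomorphism, the domain being completed precisely by the HH-driven forth steps.
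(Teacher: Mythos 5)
Your proof is correct and follows essentially the same back-and-forth argument as the paper: odd/forth steps use HH-homogeneity to absorb the next vertex into the domain, even/back steps use \dgr (applied to $f_k$ viewed as a surjection onto its image) to absorb the next vertex into the image, and the union of the chain is the desired epimorphism. No meaningful differences.
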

\begin{proof}
Let $f:H\to H'$ be a surjective homomorphism between finite induced substructures of $G$. Enumerate $G$ in two different ways, $G=\{a_i:i\in\omega\}$, $G=\{c_i:i\in\omega\}$, so that $\{a_i:i\leq n\}=H$, $\{c_i:i\leq m\}=H'$. We will extend $f$ by a back-and-forth argument. 

Even steps: Suppose that $f_{2k}$ is a homomorphism extending $f$ (this is trivially true for $f_0=f$). Let $e$ be the least index for which $c_{e}$ is not in $\img{f_k}$. Since $G$ satisfies \dgr, there exists some $a_{s}\notin\dom{f_{2k}}$ such that $f_{2k+1}:=f_k\cup\{(a_{s},c_{e})\}$ is a homomorphism.

Odd steps: Suppose that $f_{2k+1}$ is a homomorphism that extends $f$. Let $q$ be the least index for which $f_{2k+1}$ is not defined on $a_{q}$. Since $G$ is HH-homogeneous, there exists some $c_{r}$ such that $f_{2(k+1)}=f_{2k+1}\cup\{(a_{q},c_{r})\}$ is a homomorphism.

Now we claim that $\hat f=\bigcup\{f_k:k\in\N\}$ is a surjective endomorphism of $G$. By construction, its image is all of $G$, and moreover any restriction to a finite domain is restriction of some $f_m$, which is a local homomorphism; from this it follows that $G$ preserves edges.
\end{proof}

We mention two special cases of Proposition \ref{prop:HHctrg} for ease of reference.
\begin{corollary}\label{cor:dgrHE}~
\begin{enumerate}
\item{If $G$ is a countably infinite ME-homogeneous graph with \dgr, then $G$ is HE-homogeneous.}\label{dgrhe1}
\item{If $G$ is a countably infinite MB-homogeneous graph with \dgr, then $G$ is HE-homogeneous.}\label{dgrhe2}
\end{enumerate}
\end{corollary}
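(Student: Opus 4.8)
The plan is to derive Corollary \ref{cor:dgrHE} directly as a specialization of Proposition \ref{prop:HHctrg}, since both items share the same hypothesis (property \dgr) and the same conclusion (HE-homogeneity), and the proposition is already proved for any countably infinite HH-homogeneous graph with \dgr. So the entire argument reduces to showing that the hypotheses of each item imply HH-homogeneity; once that is in hand, Proposition \ref{prop:HHctrg} applies verbatim.

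For item \ref{dgrhe1}, I would invoke Theorem \ref{thm:rs} (Rusinov-Schweitzer): for countable graphs MH-homogeneity coincides with HH-homogeneity. Since ME-homogeneity means every local monomorphism extends to an epimorphism, and every epimorphism is in particular a homomorphism, an ME-homogeneous graph is a fortiori MH-homogeneous; hence by Theorem \ref{thm:rs} it is HH-homogeneous. Thus an ME-homogeneous graph with \dgr is HH-homogeneous with \dgr, and Proposition \ref{prop:HHctrg} yields HE-homogeneity. For item \ref{dgrhe2} the reasoning is identical in spirit: an MB-homogeneous graph has every local monomorphism extending to a bimorphism, and a bimorphism is a homomorphism, so MB-homogeneity implies MH-homogeneity, which again gives HH-homogeneity via Theorem \ref{thm:rs}. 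One could equally observe that MB$\subseteq$ME (every bimorphism is an epimorphism), reducing item \ref{dgrhe2} to item \ref{dgrhe1}, but the direct route through MH is cleaner and self-contained.

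I do not anticipate a genuine obstacle here, as this is a packaging corollary rather than a new result; the only point requiring minimal care is confirming the inclusions among morphism-extension classes that make MH-homogeneity follow. Concretely, one must note that E (epimorphism) and B (bimorphism) are both types of homomorphism, so that extensions of the required strength are in particular homomorphic extensions; this places both ME and MB inside MH, after which Theorem \ref{thm:rs} does all the work. I would state the proof in two or three sentences, citing Theorem \ref{thm:rs} to pass from MH to HH and then Proposition \ref{prop:HHctrg} to conclude, without reproducing the back-and-forth construction.
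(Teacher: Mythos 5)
Your proposal is correct and follows exactly the paper's own argument: ME- and MB-homogeneous graphs are MH-homogeneous (since epimorphisms and bimorphisms are homomorphisms), hence HH-homogeneous by Theorem \ref{thm:rs}, and Proposition \ref{prop:HHctrg} then gives HE-homogeneity. No gaps and no meaningful differences from the paper's proof.
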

\begin{proof}
ME- and MB-homogeneous graphs are MH-homogeneous, and by Theorem \ref{thm:rs}, HH-homogeneous. The result now follows from Proposition \ref{prop:HHctrg}
\end{proof}

\begin{proposition}\label{prop:NoStr}
If $G$ is an HE-homogeneous graph with property \str, then $G$ is MB-homogeneous.
\end{proposition}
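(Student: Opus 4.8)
The plan is to extend an arbitrary finite monomorphism $f\colon H\to H'$ to a bimorphism $\hat f\colon G\to G$ by a back-and-forth construction, fixing two enumerations of the vertex set (one to exhaust the domain, one to exhaust the image). At each stage I would maintain a finite \emph{monomorphism} $g_n\colon A_n\to B_n$ extending $f$ (so $g_n$ is an injective homomorphism and $B_n=\img{g_n}$), starting from $g_0=f$. The union $\hat f=\bigcup_n g_n$ is then automatically injective (an increasing union of injections) and a homomorphism (every edge of $G$ lies in some finite $A_n$, where $g_n$ preserves it); the two kinds of step below force its domain and image to be all of $G$, so it is a bijective homomorphism, i.e.\ a bimorphism.

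For the forth steps I would use Property \str. By Remark~\ref{rmk:strmh}, \str is exactly MM-homogeneity, so given $g_n$ (a surjective monomorphism onto $B_n$) and the least domain-vertex $c\notin A_n$, there is a fresh $d\notin B_n$ with $g_n\cup\{(c,d)\}$ a homomorphism; since $d$ is fresh this extension is again an injective homomorphism, hence a monomorphism. This step is routine, and its only purpose is to preserve injectivity while enlarging the domain.

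The crux is the back step, where the image must grow without injectivity being destroyed, and here HE-homogeneity does the work in one stroke. Given $g_n$ and the least image-vertex $c'\notin B_n$, I regard $g_n$ as a homomorphism between finite induced subgraphs; by HE-homogeneity it extends to an epimorphism $F_n\colon G\to G$. Surjectivity of $F_n$ yields some $a$ with $F_n(a)=c'$, and since $F_n$ agrees with $g_n$ on $A_n$ while $c'\notin B_n=F_n(A_n)$, this $a$ lies outside $A_n$. Then $g_n\cup\{(a,c')\}$ is a restriction of the homomorphism $F_n$, hence a homomorphism, and it is injective because both $a$ and $c'$ are fresh; so it is again a monomorphism, as required. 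I expect the main (and essentially only) obstacle to be recognising that HE-homogeneity hands over preimages that are automatically compatible with injectivity: a brand-new epimorphism $F_n$ is manufactured at each back step purely to read off one preimage of $c'$ and is then discarded, so the $F_n$ need not cohere in any way.

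It is worth noting that both hypotheses are genuinely used, for complementary reasons: \str supplies injective extensions of the \emph{domain} but says nothing about the image, whereas HE supplies surjective extensions but would break injectivity if invoked on the forth steps — so each property governs exactly one direction of the construction. Once both enumerations are exhausted, $\hat f$ is the desired bimorphism extending $f$, and $G$ is MB-homogeneous.
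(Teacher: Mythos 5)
Your proposal is correct and follows essentially the same back-and-forth argument as the paper: \str\ handles the forth steps (fresh image vertex keeps the extension injective), and HE-homogeneity handles the back steps (an epimorphism extending the current finite monomorphism supplies a preimage of the target vertex, which is automatically outside the current domain because the target lies outside the current image). The only cosmetic difference is that the paper interleaves these as even/odd steps of a single sequence rather than labelling them forth/back.
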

\begin{proof}
Let $f:H\to H'$ be a surjective monomorphism between finite induced substructures of $G$. Enumerate $G$ in two different ways, $G=\{a_i:i\in\omega\}$, $G=\{c_i:i\in\omega\}$, so that $\{a_i:i\leq n\}=H$, $\{c_i:i\leq n\}=H'$, and $f$ maps $a_i\mapsto c_i$ for $i<n$. We will extend $f$ by a back-and-forth argument in which the even steps use HE-homogeneity to ensure surjectivity and the odd steps ensure injectivity and that every element appears eventually in the domain of some extension. 

Even steps: Suppose that $f_{2k}$ is a monomorphism extending $f$ (this is trivially true for $f_0=f$). Let $e$ be the least index for which $c_{e}\notin\img{f_k}$. Since $G$ is HE, there exists some $a_{s}$ such that $f_{2k+1}:=f_k\cup\{(a_{s},c_{e})\}$ is a homomorphism. In particular, it is a function and $a_s\notin\dom{f_{2k}}$, so $f_{2k+1}$ is a monomorphism covering $c_{e}$. 

Odd steps: Suppose that $f_{2k+1}$ is a monomorphism that extends $f$. Let $q$ be the least index for which $f_{2k+1}$ is not defined on $a_{q}$. Since $G$ has \str, there exists some $c_{r}$ such that the extension $f_{2(k+1)}=f_{2k+1}\cup\{(a_{q},c_{r})\}$ is a monomorphism defined on $a_{q}$.

We claim that $\hat f=\bigcup\{f_k:k\in\N\}$ is a bimorphism of $G$. It is a bijection $G\to G$ by construction, and any restriction to a finite domain is a monomorphism, so $\hat f$ is an endomorphism.
\end{proof}

The proof of the following proposition follows the same pattern as those of Propositions \ref{prop:HHctrg} and \ref{prop:NoStr}, and is left to the reader.
\begin{proposition}\label{prop:ihdgr}
If $G$ is IH-homogeneous and satisfies \dgr, then $G$ is IE-homogeneous.
\end{proposition}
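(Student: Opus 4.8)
The plan is to mimic the back-and-forth arguments of Propositions \ref{prop:HHctrg} and \ref{prop:NoStr} exactly, with the enumeration and surjectivity/injectivity bookkeeping adapted to the IH/IE setting. Let $f\colon H\to H'$ be a surjective \emph{isomorphism} between finite induced substructures of $G$; I want to extend it to a surjective isomorphism (an embedding onto $G$) $\hat f\colon G\to G$, i.e.\ an IE-morphism. As before I would fix two enumerations $G=\{a_i:i\in\omega\}=\{c_i:i\in\omega\}$ arranged so that $H=\{a_i:i\le n\}$, $H'=\{c_i:i\le n\}$, and $f$ acts by $a_i\mapsto c_i$ for $i\le n$. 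The even steps will guarantee surjectivity and the odd steps will guarantee that every vertex enters the domain, exactly as in Proposition \ref{prop:NoStr}.

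For the even (surjectivity) steps, I would use \dgr: given the current map $f_{2k}$ and the least index $e$ with $c_e\notin\img{f_{2k}}$, property \dgr produces $a_s\notin\dom{f_{2k}}$ such that $f_{2k+1}:=f_{2k}\cup\{(a_s,c_e)\}$ is a homomorphism. For the odd (domain-exhaustion) steps, I would use IH-homogeneity: given $f_{2k+1}$ and the least $q$ with $a_q\notin\dom{f_{2k+1}}$, IH-homogeneity supplies some $c_r$ with $f_{2(k+1)}:=f_{2k+1}\cup\{(a_q,c_r)\}$ a homomorphism. The union $\hat f=\bigcup_k f_k$ is then a surjection $G\to G$ all of whose finite restrictions are homomorphisms, hence an endomorphism, so $\hat f$ is an epimorphism extending $f$, giving IE-homogeneity.

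The point that needs care — and the main obstacle — is verifying that every partial map $f_k$ remains an \emph{isomorphism onto its image} (equivalently, a monomorphism that reflects as well as preserves edges), not merely a homomorphism, since IH/IE demand the starting map be an isomorphism and we must invoke IH-homogeneity at each odd step on a map that is genuinely an isomorphism. This is where the IH case is lighter than the MB case of Proposition \ref{prop:NoStr}: because each $f_k$ is built by adding one vertex at a time and every finite restriction is a homomorphism, the critical check is that adding a new pair never creates a collision nor a spurious edge. Injectivity at even steps is automatic because $a_s\notin\dom{f_{2k}}$ and $c_e\notin\img{f_{2k}}$; at odd steps one must argue that the image $c_r$ supplied by IH-homogeneity is distinct from all previous images and that the induced subgraph on $\dom{f_{2(k+1)}}$ maps isomorphically — but this follows since $f_{2(k+1)}$ is itself a local homomorphism with domain of the same size as its image, and IH-homogeneity was applied to the local \emph{isomorphism} $f_{2k+1}$, whose extension it returns.

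Since the structural skeleton is identical to the two preceding proofs and only the adjectives ``monomorphism/bimorphism'' are replaced by ``isomorphism/epimorphism'' throughout, the routine verifications are genuinely routine; this is precisely why the statement is flagged as left to the reader. I would therefore present only the explicit even/odd step descriptions above and the one-line closing argument that $\hat f$ is a surjective endomorphism, noting that the isomorphism (as opposed to mere monomorphism) bookkeeping is inherited verbatim from the pattern of Proposition \ref{prop:NoStr}.
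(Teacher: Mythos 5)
Your overall plan coincides with what the paper prescribes (the paper literally leaves this proof to the reader as ``the same pattern'' as Propositions \ref{prop:HHctrg} and \ref{prop:NoStr}), but the execution of the odd steps has a genuine gap. IH-homogeneity can only be invoked on a local \emph{isomorphism}, and the intermediate maps $f_k$ in your construction need not be isomorphisms: property \dgr only guarantees that $f_{2k}\cup\{(a_s,c_e)\}$ \emph{preserves} edges, not that it reflects them (it may send a nonedge $a_sx$ to an edge $c_ef_{2k}(x)$), and the one-point restriction of the endomorphism supplied by IH-homogeneity at an odd step is likewise merely a homomorphism --- nothing forces $c_r\notin\img{f_{2k+1}}$ or forces nonedges at $a_q$ to go to nonedges. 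Your justification that each $f_k$ ``remains an isomorphism onto its image'' is circular: injectivity plus edge-preservation only yields a monomorphism, and even the injectivity claim at odd steps is unsupported. This is exactly where the analogy with Proposition \ref{prop:NoStr} breaks down: there the invariant ``monomorphism'' really is preserved by both tools, whereas here neither \dgr nor IH-homogeneity preserves the invariant ``isomorphism'' that the next odd step requires. (Note also that the target is only an epimorphism, i.e.\ a surjective homomorphism, not a ``surjective isomorphism'' as you first write; the isomorphism invariant is needed solely to re-invoke IH-homogeneity, and that is precisely what fails.)

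The gap is not cosmetic. Take $G=\mathcal{H}_3$, which is IH-homogeneous and satisfies \ctrg, hence \dgr. Start from the isomorphism $x_1\mapsto y_1$ and suppose the first even step must cover some $b\sim y_1$. A legitimate \dgr-witness is any co-cone $a$ over $\{x_1\}$, giving $f_1=\{(x_1,y_1),(a,b)\}$, which maps the nonedge $x_1a$ to the edge $y_1b$. If the next vertex to enter the domain is a common neighbour $c$ of $x_1$ and $a$ (one exists by the extension axioms), its image would have to be a common neighbour of $y_1$ and $b$, i.e.\ complete a triangle; no such vertex exists, and no homogeneity hypothesis applies to the non-isomorphism $f_1$. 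Since $\mathcal{H}_3$ \emph{is} IE-homogeneous, this does not refute the proposition, only this proof of it. A correct argument must either constrain the choice of \dgr-witnesses so that the partial maps remain isomorphisms, or supply a separate lemma allowing the domain of a homomorphism built over an isomorphism to be extended; transplanting the earlier back-and-forth verbatim does not suffice.
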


\section{Conditions on $\age{G}$ characterising HH, HE, ME}
In this subsection, we give properties of $\age{G}$ that characterise HH-, HE-, and ME-homogeneity for graphs in terms of cones and co-cones. It is often easier to verify these conditions than to prove homogeneity directly.

\begin{definition}
Let $G$ be a countable graph. Define:
\begin{enumerate}
\item{Define $\kk{G}$ as the subset of $\age{G}$ consisting of all $A$ for which there exists an embedding $e:A\to G$ such that $G$ contains a cone over $e[A]$}
\item{Define $\okk G$ as the subset of $\age{G}$ consisting of all $A\in\age{G}$ for which there exists an embedding $e:A\to G$ such that no vertex in $G\setminus e[A]$ is a cone in $G$ over $e[A]$}
\item{$\hh{G}$ as the subset of $\age{G}$ consisting of all $A\in\age{G}$ for which there exists an embedding $e:A\to G$ such that $G$ contains a co-cone over $e[A]$, and}
\item{$\ohh{G}$ as the subset of $\age{G}$ consisting of all $A\in\age{G}$ for which there exists an embedding $e:A\to G$ such that no vertex in $G\setminus e[A]$ is a co-cone over $C$.}
\end{enumerate}
\end{definition}

Each of the pairs $\kk{G}, \okk{G}$ and $\hh{G},\ohh{G}$ covers $\age{G}$, but they do not form partitions in a general graph. 	For example, if $G$ contains a non-trivial connected component and an isolated vertex, then the graph consisting of a single vertex appears in $\kk{G}$ and $\okk{G}$.

\begin{observation}\label{obs:kkokk}
Let $G$ be an IH-homogeneous graph, and suppose that $C\in\kk{G}$. Then for all embeddings $e\colon C\to G$, there is a cone over $e[C]$ in $G$. Similarly, if $C\in\okk{G}$, then no copy of $C$ in $G$ has a cone in $G$.
\end{observation}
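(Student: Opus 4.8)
The plan is to show that, for an IH-homogeneous graph, the existence of a cone over a copy of $C$ is an invariant of the isomorphism type $C$ rather than of the particular embedding; both halves of the statement then follow at once. The engine is a single transport argument driven by IH-homogeneity.

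First I would fix two embeddings $e_0,e\colon C\to G$ and consider the local isomorphism $\phi:=e\circ e_0^{-1}\colon e_0[C]\to e[C]$, which is well defined and is an isomorphism of induced subgraphs because both images are copies of $C$. By IH-homogeneity, $\phi$ extends to an endomorphism $F\colon G\to G$, and by construction $F$ restricts to a bijection $e_0[C]\to e[C]$, so that $F[e_0[C]]=e[C]$.

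Next, suppose $v_0$ is a cone over $e_0[C]$, that is, $v_0\sim x$ for every $x\in e_0[C]$. Since $F$ is a homomorphism, $F(v_0)\sim F(x)$ for every such $x$, and as $x$ ranges over $e_0[C]$ the image $F(x)$ ranges over all of $e[C]$. Hence $F(v_0)$ is adjacent to every vertex of $e[C]$. Because $G$ is loopless, $F(v_0)$ cannot lie in $e[C]$ (it would be adjacent to itself), so $F(v_0)\in G\setminus e[C]$ is a genuine cone over $e[C]$. This establishes the key fact: whenever one copy of $C$ carries a cone, so does every copy.

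The two assertions are now immediate. For the first, if $C\in\kk{G}$ then some copy $e_0[C]$ has a cone, and the transport argument produces a cone over every copy $e[C]$. For the second I would argue contrapositively: if some copy $e[C]$ of $C$ had a cone, then by the same argument the witnessing copy $e_0[C]$ for $C\in\okk{G}$ would also have a cone, contradicting its defining property. I do not anticipate a genuine obstacle here; the only point requiring care is verifying that $F(v_0)$ is a legitimate cone — adjacent to all of $e[C]$ and lying outside it — which follows from $F$ being edge-preserving together with the looplessness of $G$.
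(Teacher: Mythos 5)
Your proposal is correct and follows essentially the same route as the paper: extend the local isomorphism between the two copies of $C$ (oriented from the copy carrying a cone to the other) to an endomorphism via IH-homogeneity, and observe that the image of the cone is forced to be a cone over the target copy. The paper phrases this as a contradiction while you argue directly and additionally note that looplessness keeps $F(v_0)$ outside $e[C]$, but the underlying argument is identical.
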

\begin{proof}
Suppose for a contradiction that there exist embeddings $e_0,e_1\colon C\to G$ such that there exists a cone $c$ over $e_1[C]$, but not over $e_0[C]$. Then no isomorphism $i\colon e_1[C]\to e_0[C]$ is restriction of an endomorphism, because $c$ cannot be mapped to any vertex. 

A similar argument proves the second statement.
\end{proof}

The proof of Observation \ref{obs:kkokk} can be adapted to prove the following:
\begin{observation}
Let $G$ be an IE-homogeneous graph, and suppose that $C\in\hh{G}$. Then for all embeddings $e\colon C\to G$, there is a co-cone over $e[C]$ in $G$. Similarly, if $C\in\ohh{G}$, then no copy of $C$ in $G$ has a co-cone in $G$.
\end{observation}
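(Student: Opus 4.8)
The plan is to mirror the proof of Observation~\ref{obs:kkokk}, replacing its use of edge-preservation with the surjectivity of epimorphisms. Since homomorphisms preserve edges but need not preserve nonedges, co-cones cannot be transported forward the way cones were; instead one must pull them back along a surjective endomorphism. Both statements will follow from a single claim: if $e_0,e_1\colon C\to G$ are embeddings and $e_1[C]$ admits a co-cone, then so does $e_0[C]$.

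To prove this claim, I would fix a co-cone $c$ over $e_1[C]$ and consider the finite isomorphism $i\colon e_0[C]\to e_1[C]$ given by $i=e_1\circ e_0^{-1}$. By IE-homogeneity, $i$ extends to a surjective endomorphism $F\colon G\to G$. Using surjectivity, I would choose $d\in G$ with $F(d)=c$ and argue that $d$ is a co-cone over $e_0[C]$. First, $d\notin e_0[C]$, since otherwise $F(d)=i(d)\in e_1[C]$, whereas $c\notin e_1[C]$. Second, if $d\sim x$ for some $x\in e_0[C]$, then $F$ being a homomorphism would give $c=F(d)\sim F(x)=i(x)\in e_1[C]$, contradicting that $c$ is a co-cone over $e_1[C]$. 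Hence $d\not\sim x$ for every $x\in e_0[C]$, and $d$ is the desired co-cone.

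The two statements of the observation are then immediate. For the first, if $C\in\hh{G}$ then some copy $e_1[C]$ carries a co-cone, so by the claim every copy $e_0[C]$ does too. For the second, if $C\in\ohh{G}$ then some copy $e_0[C]$ has no co-cone; were any copy $e_1[C]$ to have one, the claim would produce a co-cone over $e_0[C]$, a contradiction.

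The main obstacle is conceptual rather than computational: one must recognize that the argument has to run through preimages rather than images. Because co-cones are defined by nonedges, the forward transport that worked for $\kk{G}$ and $\okk{G}$ fails here, and it is precisely the surjectivity built into IE-homogeneity that allows a co-cone to be pulled back along $F$. Once this is seen, the verification that $d$ lies outside $e_0[C]$ and is nonadjacent to all of it is routine.
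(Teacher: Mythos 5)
Your proof is correct and is exactly the adaptation the paper intends: the paper gives no written proof here, only the remark that Observation~4 (the cone version) ``can be adapted,'' and your argument supplies that adaptation correctly, in particular identifying that the co-cone must be pulled back along a surjective extension rather than pushed forward. Nothing further is needed.
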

\vspace{-0.6cm}\begin{flushright}$\square$\end{flushright}

The two observations above allow us to abuse notation and write $X\in\kk{G}$ (and similar expressions) for a subgraph $X\subset G$ (as opposed to an element of the age).

In a partial order $(P,\leq)$, we call $X\subset P$ \emph{downward closed} if given any $x\in X$ and $y\in P$, $y\leq x$ implies $y\in X$. Similarly, $Y\subset P$ is \emph{upward closed} if for all $y\in Y$ and all $x\in P$, $x\geq y$ implies $x\in Y$. Naturally, we consider the empty set to be upward and downward closed.

Write $A\preceq B$ if there exists a surjective homomorphism $A\to B$. This relation is a partial order on $\age{G}$. Observe that in this order $A\preceq B$ implies $|A|\geq|B|$.

The following result is Proposition 5 from \cite{aranda2019independence}.

\begin{proposition}\label{prop:Conditions}
Let $G$ be a countable graph. Then $G$ is HH-homogeneous iff the following two conditions hold.
\begin{enumerate}
\item{$\kk{G}\cap\okk{G}=\varnothing$ and}\label{DisjCond}
\item{$\kk{G}$ upward-closed in $(\age{G},\preceq)$ (equivalently, $\okk{G}$ downward-closed in $(\age{G},\preceq)$
)}\label{kkCond}\label{okkCond}
\end{enumerate}
\end{proposition}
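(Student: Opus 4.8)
The plan is to prove both implications directly, using the reduction that $G$ is HH-homogeneous precisely when every local homomorphism $f\colon A\to G$ extends one vertex at a time: given $c\in G\setminus A$, one must produce $d\in G$ with $f\cup\{(c,d)\}$ a homomorphism. Writing $N=\{a\in A:a\sim c\}$ for the neighbourhood of $c$ inside $A$, the only constraint on $d$ is that $d\sim f(a)$ for every $a\in N$, i.e. that $d$ be a cone over the image $f[N]$. So I read the whole proposition as a dictionary between "local homomorphisms extend" and "cones over images exist," with $\kk G$ and $\okk G$ designed to track exactly this; a routine enumeration-and-iteration then upgrades one-point extensions to full endomorphisms.

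For the forward direction I would assume $G$ is HH-homogeneous. To get condition (1), I would suppose $A\in\kk G\cap\okk G$, witnessed by embeddings $e_1,e_0\colon A\to G$ where $e_1[A]$ has a cone $c$ but $e_0[A]$ has none, extend the local isomorphism $e_0\circ e_1^{-1}\colon e_1[A]\to e_0[A]$ to an endomorphism $F$, and observe that since $c\sim e_1[A]$ the image $F(c)$ is adjacent to all of $e_0[A]$ and (as $G$ is loopless) lies outside it, so $F(c)$ is a cone over $e_0[A]$, a contradiction. For condition (2), I would take $A\in\kk G$ with a cone $c$ over a copy $e[A]$, assume $A\preceq B$ via a surjective homomorphism $g\colon A\to B$ with $B$ embedded as $e_B[B]$, and extend the local homomorphism $e_B\circ g\circ e^{-1}\colon e[A]\to e_B[B]$ to an endomorphism $F$; again $F(c)$ is forced to be a cone over $e_B[B]$, so $B\in\kk G$ and $\kk G$ is upward closed.

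For the converse I would assume (1) and (2) and run the one-point extension. The vertex $c$ is a cone over the copy $N\subseteq G$, so the isomorphism type of $N$ lies in $\kk G$; the restriction $f|_N\colon N\to f[N]$ is a surjective homomorphism, giving $N\preceq f[N]$, whence upward closure (2) yields $f[N]\in\kk G$. The essential move is to upgrade this to the statement that the \emph{particular} copy $f[N]\subseteq G$ carries a cone: if it did not, then $f[N]\in\okk G$, contradicting $\kk G\cap\okk G=\varnothing$. A cone $d$ over $f[N]$ therefore exists, $d\notin f[N]$ since $G$ is loopless, and $f\cup\{(c,d)\}$ is the required extension (the case $N=\varnothing$ being trivial, with $d$ any vertex). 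Enumerating $G$ and iterating produces an endomorphism extending any prescribed local homomorphism, so $G$ is HH-homogeneous. The equivalence parenthesised in (2) is then the remark that under (1) the covering pair $\kk G,\okk G$ partitions $\age G$, so one set is upward closed iff its complement is downward closed.

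The step I expect to be the main obstacle is the passage from "some copy of $f[N]$ has a cone" to "this specific copy has a cone," which is exactly where disjointness (1) does its work. Keeping straight the distinction between isomorphism types living in $\age G$ and their concrete embeddings inside $G$ is the only real delicacy, and it is what forces both hypotheses to be used together rather than separately.
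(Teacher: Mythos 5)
Your proof is correct, and since the paper only cites this result (as Proposition 5 of \cite{aranda2019independence}) rather than proving it, the natural comparison is with the paper's proofs of the analogous co-cone characterisations (Propositions \ref{prop:ConditionsHE} and \ref{prop:ConditionsME}), which follow exactly the same pattern: disjointness via extending the isomorphism between two witnessing embeddings, closure via extending a surjective local homomorphism and tracking the image of the cone, and for the converse a one-point extension reducing to the existence of a cone over $f[N]$, with condition (1) supplying the passage from ``some copy of $f[N]$ has a cone'' to ``this copy does.'' No gaps; your handling of the type-versus-copy distinction and of the parenthesised equivalence in (2) is exactly right.
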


We are now looking for analogues of Propositions \ref{prop:Conditions} for HE- and ME-homogeneous graphs. We will need a couple of propositions first.

\begin{proposition}\label{prop:infdegcodegr}
If $G$ is a connected HE-homogeneous graph embedding a nonedge, then there are no finite maximal independent sets or cliques in $G$. In particular, each vertex has infinite degree and codegree. 
\end{proposition}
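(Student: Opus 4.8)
The plan is to split the statement into its two halves---no finite maximal clique and no finite maximal independent set---and to observe that the degree/codegree conclusions then follow formally. The clique half is essentially free: since every epimorphism is a homomorphism we have $\mathrm{HE}\subseteq\mathrm{HH}$, so an infinite connected HE-homogeneous graph is HH-homogeneous, and Theorem~\ref{thm:2facts}(1) already yields that it has no finite maximal clique and that every vertex has infinite degree. Dually, if some vertex $v$ had finite codegree then its non-neighbours would form a finite set, and any independent set through $v$ would live inside $\{v\}\cup\{w:w\not\sim v\}$, which is finite; a maximal independent set containing $v$ would then be finite. Hence the codegree conclusion reduces to showing that $G$ has no finite maximal independent set, and this is where the hypotheses ``HE'' and ``embeds a nonedge'' do the real work.

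To rule out a finite maximal independent set I would argue by contradiction: suppose $I$ is one, with $|I|=k$. A maximal independent set is exactly an independent set admitting no co-cone, so this copy of $I\cong I_k$ witnesses $I_k\in\ohh{G}$. Since HE-homogeneity implies IE-homogeneity (isomorphisms are homomorphisms), the co-cone analogue of Observation~\ref{obs:kkokk} applies and gives that \emph{no} copy of $I_k$ has a co-cone. But a co-cone over an independent $k$-set is precisely a $(k+1)$st vertex keeping it independent, so $G$ contains no independent set of size $k+1$; that is, $G$ has independence number exactly $k$, and moreover every maximal independent set has size $k$ (a maximal set of size $j<k$ would by the same observation forbid independent $(j+1)$-sets, contradicting $|I|=k$). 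In particular every vertex lies in a maximal---hence size-$k$---independent set.

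The crux is the following rigidity: under these circumstances every surjective endomorphism $F$ of $G$ is injective. The engine is that the preimage of an independent set under any homomorphism is independent (if $a'\sim b'$ with $F(a'),F(b')$ in an independent set, then $F(a')\sim F(b')$ forces either a loop or an edge inside that set, both impossible). So suppose $F(u)=F(w)=z$ with $u\neq w$; then $u\not\sim w$, and extending $\{z\}$ to a maximal independent set $S$ of size $k$ in the image, the set $F^{-1}(S)$ is independent, yet by surjectivity it contains $F^{-1}(z)\supseteq\{u,w\}$ together with a preimage of each of the other $k-1$ points of $S$, so $|F^{-1}(S)|\geq k+1$, contradicting independence number $k$. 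Hence $F$ is injective. Finally I exploit the nonedge: pick $x\not\sim y$ and consider the collapsing map $x,y\mapsto z$ onto a single vertex $z$. This is a homomorphism between finite induced subgraphs (its domain has no edge), so by HE-homogeneity it extends to a surjective endomorphism $F$; but $F(x)=F(y)=z$ makes $F$ non-injective, contradicting the rigidity just established. This contradiction shows no finite maximal independent set exists.

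The step I expect to be the main obstacle is getting the finishing contradiction right. The tempting route---extend the monomorphism sending a nonedge onto an edge and argue the extension cannot exist---fails, because a surjective endomorphism need only be a bimorphism, and bimorphisms may map nonedges to edges, so injectivity of $F$ does not by itself forbid that map. The correct leverage is instead that injectivity \emph{does} forbid a genuinely non-injective local homomorphism, and the nonedge is exactly what supplies one (two non-adjacent vertices can be folded to a point). Making the counting in the rigidity step valid also depends on knowing that every vertex sits in a maximal independent set of full size $k$, which is precisely why the preliminary appeal to the IE-homogeneous co-cone observation is needed.
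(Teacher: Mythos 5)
Your proof is correct, but it takes a genuinely different route from the paper's. The paper proves the absence of finite maximal independent sets by a two-step induction: assuming every vertex lies in an independent set of size $n$, it first boosts every codegree to at least $n$ and then boosts the independent sets to size $n+1$, in each step applying HE-homogeneity to a carefully chosen local monomorphism and observing that any preimage of the displaced vertex is automatically a co-cone. You instead argue globally by contradiction: a finite maximal independent set of size $k$, combined with the co-cone analogue of Observation~\ref{obs:kkokk} for IE-homogeneous graphs, pins the independence number at exactly $k$ and forces every maximal independent set to have size $k$; a preimage count over such a set then shows every surjective endomorphism is injective; and folding a nonedge to a single vertex produces, via HE-homogeneity, a non-injective epimorphism --- contradiction. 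Both arguments are sound. The paper's induction is self-contained and yields the quantitative codegree bounds as a byproduct, whereas your argument is shorter once the $\ohh{G}$-invariance observation is granted (the paper only sketches its proof, leaving the adaptation of Observation~\ref{obs:kkokk} to the reader), and it isolates a reusable rigidity fact (bounded, constant-size maximal independent sets force epimorphisms to be injective). Your closing remark correctly identifies the trap that sinks the naive approach: one cannot simply map a nonedge onto an edge and cry foul, since bimorphisms are allowed to do that; the non-injective fold is the right local map to use.
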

\begin{proof}
HE-homogeneous graphs are HH-homogeneous by Theorem \ref{thm:rs}, and we know that every vertex in a connected HH-homogeneous graphs has infinite degree (Theorem \ref{thm:2facts}). 

We will now prove that $G$ contains no finite maximal independent sets by an induction induction argument with two steps at each stage: first we prove that each vertex has codegree at least $n$, then we prove that each vertex belongs to an independent set of size at least $n+1$.

First observe that every vertex belongs of a nonedge: indeed, if $uv$ is a nonedge, and $w\in G$, then $w\mapsto u$ is a monomorphism and since $\{u\}$ has a co-cone, $w$ must also have one (otherwise, there would be no preimage for $v$ under any global extension of the finite mapping). Therefore, every vertex has co-degree at least 1 (and in particular belongs to an independent set of size 2). This is our basis for induction.

Now suppose that each vertex belongs to an independent set of size $n$. Let $v_0,\ldots,v_{n-1}$ be an independent set and $x\sim v_{n-1}$. Define $f:\{v_0,\ldots,v_{n-1}\}\to\{x,v_1,\ldots,v_{n-1}\}$ by $v_i\mapsto v_{i+1}$ for $i<n-1$ and $v_{n-1}\mapsto x$. This monomorphism can be extended to an epimorphism, so $v_0$ has a preimage $z\notin\{v_1,\ldots,v_n\}$, and in particular $z\not\sim v_0$. It follows that $v_0$ has codegree at least $n$.

At this point, we may assume that each vertex belongs to an independent set of size $n$ and has codegree at least $n$. Let $u_0,\ldots, u_{n-1}$ be an independent set and $v\not\sim u_0$, where we assume $v\notin\{u_0,\ldots,u_{n-1}\}$. Let $g$ be the function fixing $\{u_1,\ldots,u_{n-1}\}$ pointwise and mapping $u_0\mapsto v$. This is a monomorphism and any preimage of $u_0$ under a global extension of $g$ is a co-cone over $u_0,\ldots,u_{n-1}$, so $u_0$ belongs to an independent set of size $n+1$.
\end{proof}

\begin{proposition}\label{prop:CoconesME}
Let $G$ be a connected HE-homogeneous graph. If $C\in\kk{G}$, then there exists an infinite clique of cones over $C$. Similarly, if $D\in\hh{G}$ then there exists an infinite independent set of co-cones over $D$.
\end{proposition}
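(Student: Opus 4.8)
The plan is to prove the two statements by separate inductive constructions, building the clique (resp.\ independent set) of cones (resp.\ co-cones) one vertex at a time, and to isolate in each case a single ``one-step'' lemma. Before starting I would record the reductions that HE-homogeneity supplies: since every epimorphism is an endomorphism, $G$ is HH-homogeneous, hence (Theorem \ref{thm:rs}, as $G$ is connected) MM-homogeneous, i.e.\ it has property \str (Remark \ref{rmk:strmh}); combined with HE-homogeneity, Proposition \ref{prop:NoStr} then makes $G$ MB-homogeneous. Moreover $G$ is IE-homogeneous (isomorphisms are homomorphisms), so by the observation following Observation \ref{obs:kkokk} every embedded copy of a member of $\hh{G}$ carries a co-cone, and dually for $\kk{G}$; and unless $G=K_\omega$, Proposition \ref{prop:infdegcodegr} gives that every vertex has infinite degree and codegree, while Theorem \ref{thm:2facts}(\ref{fact2}) gives infinitely many cones over any finite subgraph admitting one.

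The co-cone statement is the one that sits naturally with HE-homogeneity, because epimorphisms \emph{reflect} non-adjacency: if $F$ is a homomorphism and $z\sim t$ then $F(z)\sim F(t)$, so a vertex whose image is a co-cone over $F(T)$ is itself a co-cone over $T$. Concretely, the one-step lemma I would prove is: if $D\in\hh{G}$ and $d$ is a co-cone over $D$, then $D\cup\{d\}\in\hh{G}$, with a co-cone non-adjacent to $d$. Since $d$ is non-adjacent to every vertex of $D$, the map fixing $D$ and sending $d$ to any vertex of $D$ is a surjective homomorphism $D\cup\{d\}\to D$; extend it to an epimorphism $F\colon G\to G$ by HE-homogeneity. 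Because $D\in\hh{G}$ and $G$ is IE-homogeneous, this copy of $D$ has a co-cone $w$; pick a preimage $z$ with $F(z)=w$ (possible as $F$ is onto). As $F$ maps all of $D\cup\{d\}$ into $D$ and $w$ is a co-cone over $D$, reflection of non-adjacency forces $z\not\sim t$ for every $t\in D\cup\{d\}$, and $z\notin D\cup\{d\}$; thus $z$ is a co-cone over $D\cup\{d\}$ and, in particular, $z$ and $d$ are two non-adjacent co-cones over $D$. Iterating, one keeps a finite independent set of co-cones over $D$, at each stage applying the lemma to the enlarged finite set (still in $\hh{G}$ by the previous steps) to produce a fresh co-cone non-adjacent to all those already chosen; the union is the desired infinite independent set.

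For the cone statement the same mechanism is unavailable, and this is the main obstacle. A cone $c$ over $D$ is adjacent to all of $D$, so there is no homomorphism folding $c$ into $D$ (any such fold would turn an edge $c\sim d$ into a loop); equivalently, epimorphisms produce co-cones, never cones, so there is no preimage trick that manufactures a common neighbour. What fails is precisely the one-step lemma ``$D\in\kk{G}$ and $c$ a cone over $D$ imply $D\cup\{c\}\in\kk{G}$'', equivalently ``some two cones over $D$ are adjacent'': neither HH- nor MM-homogeneity can create an adjacent pair of cones when none is present, since both only transport configurations already realised in $G$.

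To get past this I would feed in the extra structure already obtained, namely that $G$ is connected and MB-homogeneous, and appeal to the classification in Theorem \ref{thm:ah}. Running through the short list there, every connected MB-homogeneous graph is $K_\omega$, is bimorphism-equivalent to $\mathcal R$, or has complement bimorphism-equivalent to one of the listed graphs with $G$ itself connected; in each case $G$ satisfies property \trg (for the $\mathcal R$-equivalent graphs this is the characterisation of bimorphism-equivalence to $\mathcal R$ by \trg and \ctrg; for $K_\omega$ and for $\overline{I_\omega[K_\omega]}$ it is immediate). Property \trg supplies a cone over every finite subgraph, so starting from $C\in\kk{G}$ one builds the clique inductively: given a finite clique $c_0,\dots,c_{k-1}$ of cones over $C$, apply \trg to the finite set $C\cup\{c_0,\dots,c_{k-1}\}$ to obtain a cone $c_k$, which is adjacent to all of $C$ and to each $c_i$ and hence enlarges the clique of cones; the union is the required infinite clique. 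The delicate point in this last step is to verify that the classification really leaves only graphs with \trg among the connected ones (in particular that a connected graph cannot be bimorphism-equivalent to the disconnected $I_\omega[K_\omega]$), which I would check case by case.
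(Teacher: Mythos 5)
Your co-cone argument is correct and is essentially the paper's: the paper also extends a finite map to an epimorphism and pulls back a co-cone, the only difference being that it sends $d$ to a non-neighbour $w\notin D$ (a monomorphism, using the infinite codegree from Proposition \ref{prop:infdegcodegr}) rather than folding $d$ into $D$; both versions work, since epimorphisms reflect non-adjacency exactly as you say.

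For the cone half, however, your claimed obstruction is not genuine, and the detour through MB-homogeneity and Theorem \ref{thm:ah} is avoidable. The one-step lemma ``$C\in\kk{G}$ and $c$ a cone over $C$ imply $C\cup\{c\}\in\kk{G}$'' \emph{does} follow from HH-homogeneity alone: since $c$ has infinite degree (Theorem \ref{thm:2facts}), pick $u\sim c$ with $u\notin C\cup\{c\}$; the map fixing $C$ pointwise and sending $u\mapsto c$ is a surjective homomorphism $C\cup\{u\}\to C\cup\{c\}$ (any neighbour of $u$ in $C$ is a neighbour of $c$, because $c$ is a cone over $C$), and extending it to an endomorphism $F$ makes $F(c)$ adjacent to every vertex of $C$ and to $F(u)=c$; looplessness then forces $F(c)\notin C\cup\{c\}$, so $F(c)$ is the desired cone. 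So the configuration you say cannot be ``transported'' is already present in $G$ --- $c$ itself is a cone over $C\cup\{u\}$ --- and the fold transports it. This elementary step is what the paper's one-line citation of Theorem \ref{thm:2facts} is pointing at. Your alternative route (HE $\Rightarrow$ HH $\Rightarrow$ MM $\Rightarrow$ \str, then Proposition \ref{prop:NoStr} to get MB, then the classification to get \trg) is logically sound and not circular, since none of the ingredients depend on Proposition \ref{prop:CoconesME}, and your case-check that no connected graph is bimorphism-equivalent to $I_\omega[K_\omega]$ does go through; but it invokes the full Aranda--Hartman classification to prove a fact that holds already for connected HH-homogeneous graphs, and in doing so it obscures that the first statement needs none of the HE/MB machinery. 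I would replace that half with the two-line argument above.
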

\begin{proof}
The first statement follows from HH-homogeneity (Theorem \ref{thm:2facts}). To prove the second one, suppose that $d$ is a co-cone over $D$. We know from Proposition \ref{prop:infdegcodegr} that $d$ has infinite codegree, so there is some $w\not\sim u$ not in $D$. The map $f$ fixing $D$ pointwise and sending $d\mapsto w$ is a monomorphism, and any vertex in the preimage of $d$ under an epimorphism extending $f$ is a co-cone over $D\cup\{d\}$. Repeating this argument we can show that there is no finite bound on the size of independent set of co-cones over $D$.
\end{proof}

\begin{remark}\label{rmk:switcheroo}
In the proofs of Propositions \ref{prop:infdegcodegr} and \ref{prop:CoconesME}, the local homomorphisms used were monomorphisms, and we only used surjectivity from the condition of HE-homogeneity. It follows that if we substitute HE in the hypotheses by ME, MA, or MB, then the conclusions are still valid.
\end{remark}

\begin{proposition}\label{prop:ConditionsHE}
A countable graph is HE-homogeneous iff it is HH-homogeneous and the following two conditions on sets of co-cones hold:
\begin{enumerate}
\item{$\hh{G}\cap\ohh{G}=\varnothing$,}\label{Cond5}
\item{$\hh{G}$ is downward-closed (equivalently, $\ohh{G}$ is upward-closed) in \\$(\age{G},\preceq)$.}\label{Cond6a}
\end{enumerate}
\end{proposition}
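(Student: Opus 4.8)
The plan is to prove both implications, mirroring the proof of Proposition~\ref{prop:Conditions} for HH-homogeneity but replacing cones with co-cones and endomorphisms with epimorphisms; the extra ingredient throughout is that an epimorphism is \emph{surjective}, so that co-cones can be transported by taking \emph{preimages}. For necessity I would argue directly, and for sufficiency I would reduce to Property \dgr and invoke Proposition~\ref{prop:HHctrg}.

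\textbf{Necessity.} Assume $G$ is HE-homogeneous. Since every epimorphism is in particular an endomorphism, $G$ is HH-homogeneous. For the disjointness condition, suppose $C\in\hh{G}\cap\ohh{G}$, witnessed by a copy $e_0[C]$ carrying a co-cone $d$ and a copy $e_1[C]$ carrying none. The isomorphism $i\colon e_1[C]\to e_0[C]$ is a surjective local homomorphism, so it extends to an epimorphism $F$; surjectivity yields a vertex $d'$ with $F(d')=d$, and the edge-preservation computation ($d'\sim x$ would force $d=F(d')\sim F(x)=i(x)$, contradicting that $d$ is a co-cone) shows $d'$ is a co-cone over $e_1[C]$ --- a contradiction. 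For downward-closure, take $B\in\hh{G}$ with a co-cone $d$ over a copy $B_0$, and suppose $A\preceq B$; realize a surjective homomorphism $g\colon A_0\to B_0$ from some copy $A_0$ of $A$, extend $g$ to an epimorphism $F$, and let $d'$ be a preimage of $d$. The same computation shows $d'\not\sim a$ for all $a\in A_0$ and $d'\notin A_0$, so $A\in\hh{G}$.

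\textbf{Sufficiency.} Assume $G$ is HH-homogeneous and satisfies the two conditions. By Proposition~\ref{prop:HHctrg} it suffices to verify Property \dgr. Given a finite surjective homomorphism $f\colon A\to B$ and $b\notin B$, set $S'=\{y\in B:y\not\sim b\}$ and $S=\{x\in A:f(x)\not\sim b\}$, so that $f$ restricts to a surjective homomorphism $S\to S'$ and $b$ is a co-cone over $S'$; thus $S'\in\hh{G}$ and $S\preceq S'$, whence $S\in\hh{G}$ by downward-closure. A homomorphic extension $f\cup\{(a,b)\}$ exists precisely when $a\notin A$ is a co-cone over $S$, so it remains to produce such an $a$ outside the finite set $A$. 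Here I use that the disjointness condition forces \emph{every} copy of a member of $\hh{G}$ to carry a co-cone (otherwise that copy would witness membership in $\ohh{G}$ too). Starting from the copy $S$ --- the case $S=\varnothing$ being trivial, as every vertex is then a co-cone --- I would repeatedly adjoin a fresh co-cone: having found pairwise non-adjacent co-cones $a_0,\dots,a_{k-1}$ over $S$, the set $S\cup\{a_0,\dots,a_{k-1}\}$ maps onto $S$ by collapsing the new vertices (each non-adjacent to everything) onto a fixed vertex of $S$, hence lies in $\hh{G}$ by downward-closure and so carries a further co-cone $a_k$. This yields infinitely many co-cones over $S$, all but finitely many avoiding $A$; any of them serves as the required $a$.

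The main obstacle I anticipate is the last step of sufficiency: Property \dgr demands a preimage-candidate \emph{outside the whole domain} $A$, whereas the age conditions only guarantee a co-cone outside $S$. Bootstrapping from one co-cone to an infinite independent set of them --- using downward-closure together with the coherence supplied by disjointness --- is what closes this gap, and it is the place where some care is needed to verify that each enlarged set indeed lies $\preceq$-below $S$. The necessity direction is comparatively routine, since it only repeats the preimage-transport argument, which is the co-cone analogue of the cone-transport used for HH-homogeneity in Proposition~\ref{prop:Conditions}.
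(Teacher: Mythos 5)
Your proof is correct, and the necessity direction is essentially the paper's: transport a co-cone backwards through an epimorphism extending the relevant isomorphism (for disjointness) or surjective local homomorphism (for downward-closure). The sufficiency direction follows the same overall strategy --- reduce to producing, for each $b\notin B$, a preimage-candidate outside $A$, then conclude by back-and-forth via Proposition~\ref{prop:HHctrg} --- but your key step is genuinely different and, in one respect, tighter. The paper splits into the cases $A\in\hh{G}$ and $A\in\ohh{G}$ and, in the second case, obtains a co-cone over $f^{-1}[X]$ outside $A$ by citing Proposition~\ref{prop:CoconesME}, whose stated hypothesis is HE-homogeneity rather than the conditions actually available at that point in the argument. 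You instead isolate the set $S=f^{-1}[S']$ of preimages of non-neighbours of $b$, note that $S\preceq S'$ places $S$ in $\hh{G}$, and then manufacture an infinite independent set of co-cones over $S$ by repeatedly observing that $S\cup\{a_0,\dots,a_{k-1}\}$ collapses onto $S$ (so lies in $\hh{G}$ by downward-closure) and hence, by disjointness, carries a fresh co-cone. This bootstrapping derives the needed ``infinitely many co-cones'' fact directly from conditions~\ref{Cond5} and~\ref{Cond6a}, making the sufficiency direction self-contained where the paper's version leans on an external proposition; the cost is a slightly longer argument. Both routes are valid, and your identification of the gap between ``co-cone outside $S$'' and ``co-cone outside $A$'' as the crux is exactly right.
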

\begin{proof}
Let $G$ be an HE-homogeneous graph (it is clearly HH-homogeneous as well). Suppose for a contradiction that $C\in\hh{G}\cap\ohh{G}$. Then there exist embeddings $e_0,e_1\colon C\to G$ such that there exists a co-cone $c$ over $e_1[C]$, but not over $e_0[C]$. Then no isomorphism $i\colon e_1[C]\to e_0[C]$ is restriction of an endomorphism, because no vertex can be an image of $c$. Therefore, \ref{Cond5} holds.

Now suppose that $B\in C$ is an element of $\hh{G}$, and let $e\colon B\to G$ be an embedding such that $G$ contains a co-cone over $e[B]$. Suppose that $f\colon A\to B$ is a surjective local homomorphism; by HE-homogeneity, there exists a surjective endomorphism $F$ with $F_A=f$. By surjectivity of $F$, there exists $c\notin A$ with $F(c)=b$. Clearly, $c$ is a co-cone over $A$, and the isomorphism type of $A$ is in $\hh{G}$. 

To prove the converse, suppose that $G$ is a HH-homogeneous graph that satisfies the two conditions from the statement. Let $f\colon A\to B$ be a surjective local homomorphism. 

Given $a\notin A$, HH-homogeneity implies that we can find some $b$ with $f\cup\{(a,b)\}$ is a homomorphism.

Given $b\notin B$, we may assume that $b$ is not a cone over $B$ because in that case any $d\notin A$ can be mapped to $b$ as a homomorphism extending $f$. There are two cases to consider:
\begin{enumerate}
\item{If $A\in\hh{G}$, then for any co-cone $c$ of $A$, the function $f\cup\{(c,b)\}$ is a homomorphism.}
\item{If $A\in\ohh{G}$, then, since $b$ is not a cone, there is a nonempty $X\subset B$ such that $b$ is a co-cone over $X$. Since $\hh{G}$ is downward-closed, the preimage of $X$ under $f$ has a co-cone $d$, which we can choose outside of $A$ by Proposition \ref{prop:CoconesME}. Now $f\cup\{(d,b)\}$ is a homomorphism.}
\end{enumerate}
The equivalence in Condition \ref{Cond6a} follows from the fact that $\hh{G}$ and $\ohh{G}$ form a partition of $\age{G}$.
\end{proof}

The analogue of Proposition \ref{prop:Conditions} for ME-homogeneous graphs requires a small adjustment: instead of the partial order $\preceq$, we will compare structures using the partial order $A\sqsubseteq B$ that holds when there exists a surjective \emph{mono}morphism $A\to B$.
\begin{proposition}\label{prop:ConditionsME}
A countable graph is ME-homogeneous iff it is HH-homogeneous and the following two conditions on sets of co-cones hold:
\begin{enumerate}
\item{$\hh{G}\cap\ohh{G}=\varnothing$,}\label{Cond5}
\item{$\hh{G}$ is downward-closed (equivalently, $\ohh{G}$ is upward-closed) in\\ $(\age{G},\sqsubseteq)$.}\label{Cond6a}
\end{enumerate}
\end{proposition}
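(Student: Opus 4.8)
The plan is to mirror the proof of Proposition \ref{prop:ConditionsHE} almost verbatim, since the only difference between ME- and HE-homogeneity is that ME demands extending \emph{monomorphisms} (rather than arbitrary homomorphisms) to epimorphisms. So I would first check which parts of the HE-argument actually used that the maps being extended were homomorphisms, and observe that the forward direction goes through unchanged: if $G$ is ME-homogeneous it is MH-homogeneous, hence HH-homogeneous by Theorem \ref{thm:rs}, and the two conditions on $\hh{G},\ohh{G}$ are established exactly as before. Indeed, the witnesses used to derive Conditions \ref{Cond5} and \ref{Cond6a} in Proposition \ref{prop:ConditionsHE} are local \emph{isomorphisms} and local \emph{monomorphisms} respectively, which are in particular monomorphisms, so ME-homogeneity suffices to run those arguments. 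This is where Remark \ref{rmk:switcheroo} pays off, licensing the reuse of Proposition \ref{prop:CoconesME}.

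For the converse I would start with a surjective local \emph{monomorphism} $f\colon A\to B$ and build a surjective endomorphism by back-and-forth, as in Proposition \ref{prop:ConditionsHE}. The even steps (ensuring surjectivity) are identical: given $b\notin B$, if $b$ is a cone over $B$ we map any fresh vertex to it; otherwise we split into the cases $A\in\hh{G}$ and $A\in\ohh{G}$ and produce a co-cone $d\notin A$ with $f\cup\{(d,b)\}$ a homomorphism, using downward-closure of $\hh{G}$ together with Proposition \ref{prop:CoconesME}. The odd steps (ensuring the domain eventually exhausts $G$) use HH-homogeneity to find an image for any prescribed $a\notin A$. The key point is that all we need for ME-homogeneity is that the resulting global map be a surjective homomorphism (an epimorphism); we do \emph{not} need it to be injective, so there is no obligation to preserve monomorphy along the construction, and the HE-proof's back-and-forth delivers exactly an epimorphism.

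The one genuine adjustment is the partial order: the equivalence between downward-closure of $\hh{G}$ and upward-closure of $\ohh{G}$ in Condition \ref{Cond6a} is now asserted for $(\age{G},\sqsubseteq)$ rather than $(\age{G},\preceq)$, and I must confirm that the place where the order is invoked still functions. The order is used only in the $A\in\ohh{G}$ subcase, where from $b$ being a co-cone over some nonempty $X\subseteq B$ we need a co-cone over the $f$-preimage of $X$. Here I would verify that the relevant comparison is the one witnessed by a surjective \emph{monomorphism}: because $f$ is a monomorphism, its restriction to $f^{-1}(X)$ is a surjective monomorphism onto $X$, so $f^{-1}(X)\sqsubseteq X$, and downward-closure of $\hh{G}$ in $\sqsubseteq$ then yields $f^{-1}(X)\in\hh{G}$, giving the needed co-cone. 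This is exactly the spot that forces the switch from $\preceq$ to $\sqsubseteq$, and it is the only step requiring care; everything else is a transcription of the HE-case. I expect this order-bookkeeping to be the main (though mild) obstacle, and I would state the proof compactly, emphasizing the $\sqsubseteq$ substitution and otherwise deferring to Proposition \ref{prop:ConditionsHE}.
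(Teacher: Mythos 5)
Your proposal matches the paper's proof essentially step for step: the forward direction reruns the argument of Proposition \ref{prop:ConditionsHE} with monomorphisms, and the converse performs the same case analysis on whether the target vertex $d'$ is a cone over $B$, a co-cone over $B$, or a co-cone over a proper nonempty $X\subset B$. Your isolation of the one point where $\sqsubseteq$ replaces $\preceq$ --- the restriction of the monomorphism $f$ to $f^{-1}(X)$ witnessing $f^{-1}(X)\sqsubseteq X$ and hence $f^{-1}(X)\in\hh{G}$ by downward-closure --- is in fact more explicit than the paper's own text, which at that step cites Proposition \ref{prop:infdegcodegr} instead of the closure hypothesis.
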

\begin{proof}
Suppose thath $G$ is ME-homogeneous. In particular, it is MH-homogeneous, and, by Theorem \ref{thm:rs}, HH-homogeneous. The first condition is satisfied by the same argument as in Proposition \ref{prop:ConditionsHE}. Condition \ref{Cond6a} follows by the same arguments as in Proposition \ref{prop:ConditionsHE}, but with monomorphisms.

Now suppose that $G$ is HH-homogeneous and satisfies conditions \ref{Cond5} and \ref{Cond6a}, and let $f:A\to B$ be a surjective monomorphism. Observe that condition \ref{Cond5} needs to hold, for otherwise we could map a substructure without a co-cone to one with a co-cone, and any global extension of such a mapping could not have the co-cone in its image.

For any $c\notin A$, we can find a $c'$ so that $f\cup\{(c,c')\}$ is a homomorphism by HH-homogeneity, so we need only ensure that $f$ can be extended surjectively. Take a vertex $d'\notin B$; we need to find some $d\notin A$ such that $f\cup\{(d,d')\}$ is a homomorphism. 
\begin{enumerate}
\item{If $d'$ is a co-cone over $B$, then condition \ref{Cond6a} ensures that we can find a suitable $d\notin B$.}
\item{Otherwise, $d'$ is not a co-cone and we can assume that it is also not a cone over $B$, so it is a co-cone over a nonempty $X\subset B$. Restricting $f$ to the preimage of $X$ yields a monomorphism, and applying Proposition \ref{prop:infdegcodegr} again, we can find a preimage for $d'$.}
\end{enumerate}
This completes the proof.
\end{proof}

\section{$\mathrm{MB\subseteq HE}$}

The following proposition is a consequence of the fact that the class of MB-homogeneous graphs is closed under complements (proved in \cite{ColemanEvansGray:2019}). A proof appears in \cite{aranda2019independence}, where the result is called Corollary 25.
\begin{proposition}\label{cor:InfDegCodeg}
If a countable graph $G$ is MB-homogeneous and neither complete nor empty, then it is connected or isomorphic to $I_\omega[K_\omega]$. Moreover, every vertex has infinite degree and co-degree.
\end{proposition}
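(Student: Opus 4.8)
The plan is to prove the two assertions separately, using throughout that both $G$ and its complement $\overline G$ are MB-homogeneous (closure under complements), that MB-homogeneity implies MH- and hence, by Theorem~\ref{thm:rs}, HH-homogeneity, and that MB-homogeneity implies IH-homogeneity. For connectivity I would argue that if $G$ is not connected then it must be $I_\omega[K_\omega]$. Since $G$ is IH-homogeneous, Lemma~\ref{lem:disccliques} already gives $G\cong I_n[K_m]$ with $\max\{n,m\}=\omega$, and as $G$ is neither empty nor complete we have $n,m\geq 2$. So the whole content of the first assertion is reduced to excluding the two ``finite'' possibilities and forcing $n=m=\omega$.

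This exclusion is the heart of the matter. If $m$ were finite (so $n=\omega$), I would take a nonedge $\{a_1,a_2\}$ with $a_1,a_2$ in distinct components and an edge $\{b_1,b_2\}$ inside one clique; the map $a_i\mapsto b_i$ is a local monomorphism, but no bimorphism $F$ can extend it, since a bijective homomorphism would have to send the $m-1$ neighbours of $a_1$ injectively into the $m-1$ neighbours of $b_1$, one of which ($b_2=F(a_2)$) is already occupied by the image of the non-neighbour $a_2$ — a counting contradiction. If $n$ were finite (so $m=\omega$), then any bimorphism $F\colon G\to G$, being bijective, maps each connected component into a component and hence permutes the finitely many components; consequently $F$ preserves nonedges as well as edges and cannot realise the monomorphism taking a cross-component nonedge to an in-clique edge. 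Thus $n=m=\omega$, i.e. $G\cong I_\omega[K_\omega]$. (Alternatively, one may invoke Theorem~\ref{thm:ah}: the only disconnected, non-empty class on the list is that of $I_\omega[K_\omega]$, and a graph of the form $I_n[K_m]$ is bimorphism-equivalent to $I_\omega[K_\omega]$ only when $n=m=\omega$, as the reverse bijective homomorphism forces $m=\omega$ and the forward one forces $n=\omega$.)

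For the degree/codegree claim, if $G\cong I_\omega[K_\omega]$ then every vertex visibly has infinite degree (inside its clique) and infinite codegree (across components), so I may assume $G$ is connected. Then Theorem~\ref{thm:2facts}(1) gives each vertex infinite degree. For codegrees I would pass to $\overline G$, which is again MB-homogeneous and neither complete nor empty; applying the connectivity part to $\overline G$, either $\overline G$ is connected, in which case Theorem~\ref{thm:2facts}(1) makes every vertex of $\overline G$ have infinite degree, i.e. every vertex of $G$ has infinite codegree, or $\overline G\cong I_\omega[K_\omega]$, whence $G\cong K_\omega[I_\omega]$ and every vertex has infinite codegree inside its own infinite independent part. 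Note there is no circularity, since the connectivity argument does not use the degree statement.

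The main obstacle is precisely the pinning-down step of the second paragraph: collapsing the disconnected case to \emph{exactly} $I_\omega[K_\omega]$ by ruling out $I_\omega[K_m]$ with finite $m$ and $I_n[K_\omega]$ with finite $n$. This is where MB-homogeneity is genuinely used beyond mere IH-homogeneity, through the bijectivity of the extending bimorphisms; everything else is bookkeeping with the cited classification and homogeneity results.
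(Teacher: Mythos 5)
Your argument is correct, and it is worth noting that the paper itself does not prove this proposition at all: it is stated as a consequence of complement-closure of the class of MB-homogeneous graphs (from \cite{ColemanEvansGray:2019}) and otherwise deferred to Corollary~25 of \cite{aranda2019independence}. So you have supplied a self-contained proof where the paper only cites one, and your route is a sensible one: reduce the disconnected case to $I_n[K_m]$ via Lemma~\ref{lem:disccliques} (MB does imply IH, so the lemma applies), then use bijectivity of the extending bimorphisms to kill the two degenerate shapes. Both exclusion arguments check out: for finite $m$ the injective image of the $m$ clique-neighbours of $a_1$ must fill all of $N(b_1)$, leaving no room for $F(a_2)=b_2$; for finite $n$ a bijective endomorphism permutes the finitely many components and so cannot send a cross-component nonedge to an edge. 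The degree/codegree part via Theorem~\ref{thm:2facts}(1) applied to $G$ and to $\overline G$ (again MB-homogeneous, hence HH-homogeneous) is clean and, as you say, non-circular. The only external ingredient you genuinely need, beyond results proved or quoted in the paper, is the Coleman--Evans--Gray complement-closure theorem, which is the same ingredient the paper leans on; your parenthetical alternative via Theorem~\ref{thm:ah} also works but requires the extra observation that bimorphism-equivalence to a connected graph forces connectivity, so the direct counting arguments are preferable.
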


\begin{remark}\label{rmk:NegDstr}
By Corollary \ref{cor:dgrHE}, item \ref{dgrhe2}, if $G$ is MB- but not HE-homo\-geneous, then there exist finite substructures $A,B\subset G$ with a surjective homomorphism $f:A\to B$ and a vertex $c\notin B$ such that every function $g$ extending $f$ with $c\in\img{g}$ fails to be a homomorphism.
\end{remark}

We will now give a name to the indicator function of the neighbourhood of a vertex $v$ in a finite set $F$. 
\begin{notation}
In an ambient graph $G$, if $F$ is a finite subset of $G$ and $v\notin F$, we will use $\psi_{v,F}$ to denote the function $$\psi_{v,F}(x)=\begin{cases} 1& \text{if } v\sim x\\ 0& \text{otherwise.}\end{cases}$$
\end{notation} 

\begin{lemma}\label{lem:MBalg}
If $G$ is an MB- but not HE-homogeneous graph, then there exists a finite nonempty $H\subset G$ such that the set $X$ of co-cones over $H$ is finite and nonempty. In particular, $H\cup X$ does not have a co-cone in $G$.
\end{lemma}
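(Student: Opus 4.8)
The plan is to read off the set $H$ from a witnessed failure of Property \dgr, prove that its set of co-cones is finite as a direct consequence of that failure, and then separately force nonemptiness. Since $G$ is MB- but not HE-homogeneous, Corollary \ref{cor:dgrHE}(\ref{dgrhe2}) shows that $G$ cannot satisfy \dgr, so by Remark \ref{rmk:NegDstr} there is a surjective homomorphism $f\colon A\to B$ between finite subgraphs and a vertex $c\notin B$ admitting no homomorphic extension with $c$ in its image. I would set $H=\{x\in A: c\not\sim f(x)\}$, the $f$-preimage of the non-neighbours of $c$ in $B$. The key observation is that, for $a\in G\setminus A$, the map $f\cup\{(a,c)\}$ is a homomorphism precisely when $a\not\sim x$ for every $x\in H$, i.e.\ exactly when $a$ is a co-cone over $H$. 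Because no such extension exists, no vertex of $G\setminus A$ is a co-cone over $H$, so the set $X$ of all co-cones over $H$ is contained in $A$ and is therefore finite. Moreover $H\neq\varnothing$: otherwise $c$ would be a cone over $B$ and any $a\in G\setminus A$ (which exists, $A$ being finite and $G$ infinite) would extend $f$ with $c$ in its image. Once $X$ is the set of \emph{all} co-cones over $H$, the final assertion is automatic: a co-cone over $H\cup X$ is in particular non-adjacent to all of $H$ and lies outside $H$, hence is a co-cone over $H$ and belongs to $X$, contradicting that it lies outside $X$.

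The remaining, and genuinely delicate, point is to guarantee that $H$ can be chosen with $X$ also \emph{nonempty}; this is the main obstacle. The difficulty is a mismatch between the two natural candidate sets: the preimage $H$ has finitely many co-cones but possibly none, whereas its image $f(H)=\overline N(c)\cap B$ carries the co-cone $c$ (so $f(H)\in\hh{G}$) but may have infinitely many. If the co-cone set of $f(H)$ happens to be finite we are immediately done by taking $f(H)$; so the hard case is when both candidates misbehave. To handle it I would argue by a minimality device: among all finite subsets $S\subseteq G$ whose co-cone set is finite---a family that is nonempty, since it contains $H$, and excludes $\varnothing$, whose co-cones are all of $G$---choose one, again called $H$, of minimal cardinality. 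Then $H\neq\varnothing$, and if its co-cone set is nonempty we are done; otherwise, for each $h\in H$ the set $H\setminus\{h\}$ has infinitely many co-cones, and comparing the two sets shows that $h$ is adjacent to all but finitely many co-cones of $H\setminus\{h\}$.

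To close the argument I would invoke the structural results available for MB-homogeneous graphs. We may assume $G$ is connected (the complete, empty, and $I_\omega[K_\omega]$ cases are handled directly), so that Proposition \ref{prop:CoconesME}, valid for connected MB-homogeneous graphs by Remark \ref{rmk:switcheroo}, realises the co-cones of $H\setminus\{h\}$ as an infinite independent set, infinitely many of whose members must then be neighbours of $h$. Using this infinite independent set together with the infinite co-degree of every vertex guaranteed by Proposition \ref{cor:InfDegCodeg}, and the extension of monomorphisms to bimorphisms furnished by MB-homogeneity, I would construct a vertex non-adjacent to all of $H\setminus\{h\}$ and to $h$, i.e.\ a co-cone over $H$, contradicting the assumption that $H$ has none. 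This final extraction---converting the domination statement ``$h$ meets cofinitely many co-cones of $H\setminus\{h\}$'' into an actual co-cone over $H$ via MB-homogeneity---is the step I expect to require the most care, since the non-injectivity of $f$ and the fact that bimorphisms need not preserve non-edges must be navigated; the finiteness half of the statement, by contrast, falls out cleanly from the failure of \dgr.
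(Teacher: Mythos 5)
Your first half is sound and is essentially the paper's own argument: taking $H=f^{-1}[\psi_{c,B}^{-1}[0]]$ for the failing pair $(f,c)$ supplied by Remark \ref{rmk:NegDstr}, the observation that for $a\notin A$ the map $f\cup\{(a,c)\}$ is a homomorphism exactly when $a$ is a co-cone over $H$ shows that every co-cone over $H$ lies in the finite set $A$, and $H\neq\varnothing$ because otherwise $c$ would be a cone over $B$. The deduction of the lemma's final sentence from ``$X$ is the set of \emph{all} co-cones over $H$'' is also fine. The genuine gap is the nonemptiness of $X$, which you correctly identify as the crux but never actually prove. Your minimality scheme reduces matters to the situation where $H_0$ is of minimal size among finite sets with finitely many co-cones, $H_0$ has no co-cone at all, and for each $h\in H_0$ all but at most one of the infinitely many co-cones of $H_0\setminus\{h\}$ are adjacent to $h$; the announced ``final extraction'' of a co-cone over $H_0$ from this data is left as a hope rather than an argument. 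I do not see how to complete it as described: MB-homogeneity transfers non-adjacency only to \emph{preimages} under the extending bimorphism (edges are preserved forward, so images of co-cones need not be co-cones), so to realise a co-cone over $H_0$ as a preimage you would need a local monomorphism from $H_0$ onto a set that already possesses a co-cone --- and the domination property you derived, that $h$ meets essentially every co-cone of $H_0\setminus\{h\}$, is exactly what obstructs the natural candidates unless $H_0$ happens to be independent. As it stands the proposal establishes only the finiteness half of the statement.

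For comparison, the paper obtains the witness in one step from the original data $(f,c)$ rather than by minimality: it restricts $f$ to a transversal $t$ of $\ker(f_H)$, obtaining a local \emph{monomorphism} onto $\psi_{c,B}^{-1}[0]$, extends it to a bimorphism $F$ by MB-homogeneity, and takes $d=F^{-1}(c)$; since $c$ is non-adjacent to every vertex of $f[t]$ and $F$ preserves edges, $d$ is non-adjacent to every vertex of $t$, and $d\notin t$ by injectivity. The co-cone is thus pulled back along a bimorphism from the already-known co-cone $c$. That is the shape of argument your sketch is missing: you need a configuration in which the desired non-neighbourhood pattern already occurs in the \emph{image} of a local monomorphism, and the pair $(t,c)$ provides it, whereas your minimal $H_0$ does not.
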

\begin{proof}
Following Remark \ref{rmk:NegDstr}, suppose that for some $f:A\to B$ and $c\notin B$, every function $f'$ extending $f$ with $c\in\img{f'}$ fails to be a homomorphism. In particular, for every $e\notin A$, the function $f\cup\{(e,c)\}$ is not a homomorphism. Consider the type function $\psi_{c,B}$; we may assume that it is not constant 1, as any vertex outside of $A$ can be mapped to a cone over $A$ without breaking the conditions for homomorphism. 

Any vertex not related to any element of $Y\coloneqq f^{-1}[\psi_{c,B}^{-1}[0]]$ can be mapped to $c$, and said extension is a homomorphism. It follows that the formula $$\varphi(x,Y)\coloneqq \bigwedge_{e\in Y}e\not\sim x$$ does not have any solutions outside $A$.

Now we claim that $\varphi(x,Y)$ has a solution in $A$. Let $Z_0,\ldots, Z_{k-1}$ be the $\ker{f}$-classes contained in $Y$, and let $t=\{z_0,\ldots, z_{k-1}\}$ be a transversal of $\ker{f_Y}$. Then $f_t$ is a monomorphism, which by MB-homogeneity is restriction of a bimorphism $F\colon G\to G$. In particular, there exists $d\notin t$ with $F(d)=c$. We have already established that no such $d$ exists in $G\setminus A$; it follows that $d$ is an element of $A$.
\end{proof}

By Remark \ref{rmk:switcheroo}, Propositions \ref{prop:infdegcodegr} and \ref{prop:CoconesME} can be modified so that their conclusions apply to MB-homogeneous graphs. We spell out one of the resulting facts below.

\begin{corollary}\label{cor:InfInd}
If $G$ is an MB-homogeneous graph with a nonedge, and $c$ is a co-cone over a finite $A\subset G$, then there exists an infinite independent set of co-cones over $A$.
\end{corollary}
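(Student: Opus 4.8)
The plan is to mimic the inductive argument used in Proposition~\ref{prop:CoconesME}, but adapted to the MB-setting via Remark~\ref{rmk:switcheroo}. The goal is to show that a co-cone $c$ over a finite set $A$ cannot be part of any finite maximal independent set of co-cones over $A$; iterating this yields the desired infinite independent set. First I would invoke Proposition~\ref{cor:InfDegCodeg} to note that every vertex of $G$ has infinite codegree, which is what makes the base step go through: the single co-cone $c$ can always be joined by a further vertex to which it is non-adjacent.

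The inductive step is the heart of the matter. Suppose $\{c_0,\dots,c_{n-1}\}$ is an independent set of co-cones over $A$; I want to produce an $(n+1)$st co-cone over $A$ that is non-adjacent to all of them. The strategy is to build a local monomorphism that fixes $A$ pointwise together with the co-cones $c_1,\dots,c_{n-1}$, and sends $c_0$ to some vertex $w$ that is non-adjacent to all of $c_1,\dots,c_{n-1}$ and to $A$ — such a $w$ exists because each of the finitely many $c_i$ has infinite codegree and $A\cup\{c_1,\dots,c_{n-1}\}$ is finite, so a common co-cone can be found (this is exactly the kind of counting argument licensed by infinite codegree). This map is a monomorphism from $A\cup\{c_0,\dots,c_{n-1}\}$ to $A\cup\{c_1,\dots,c_{n-1},w\}$, both of which are independent sets sitting over $A$ as co-cones. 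By MB-homogeneity it extends to a bimorphism $F$ of $G$, and surjectivity provides a preimage $z$ of $c_0$. The key point is that $z$ is then a new co-cone over $A\cup\{c_1,\dots,c_{n-1}\}$: any edge from $z$ to a vertex of $A$ or to some $c_i$ would be preserved by $F$ and produce an edge incident to $c_0$, contradicting that $\{c_0,\dots,c_{n-1}\}\cup A$ is independent. Hence $\{z,c_0,\dots,c_{n-1}\}$ is an independent set of co-cones over $A$ of size $n+1$.

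Since this works for every $n$, the set of co-cones over $A$ that can be reached from $c$ contains independent sets of unbounded finite size; because $G$ is countable, a straightforward König's-lemma or greedy/back-and-forth bookkeeping then assembles an infinite independent set of co-cones over $A$. I would phrase the conclusion simply as ``there is no finite bound on the size of an independent set of co-cones over $A$,'' echoing the last line of Proposition~\ref{prop:CoconesME}, and then observe that countability lets us take the union of an increasing chain of such sets.

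The main obstacle I anticipate is verifying carefully that the preimage $z$ is genuinely a co-cone over the \emph{whole} of $A$ and not merely over $\{c_1,\dots,c_{n-1}\}$: this requires that $w$ was chosen non-adjacent to $A$ as well, so that $z$ inherits non-adjacency to $A$ through the bimorphism. A secondary subtlety is ensuring $z$ is distinct from the $c_i$ and from $A$, which follows from injectivity of the map being extended together with $z\notin \mathrm{dom}$ of the finite map; these are bookkeeping points rather than genuine difficulties, and the whole argument is essentially the co-cone analogue of the clique-of-cones statement already proved.
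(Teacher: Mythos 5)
Your overall template is the right one---it is the same ``push one co-cone out and pull back a preimage'' argument as Proposition~\ref{prop:CoconesME}, which is exactly what the paper invokes (via Remark~\ref{rmk:switcheroo}) to dispose of this corollary---but your choice of the target vertex $w$ breaks the proof in two ways. First, the existence of $w$ is not justified: you ask for a common co-cone over $A\cup\{c_1,\dots,c_{n-1}\}$ and claim one exists ``because each of the finitely many $c_i$ has infinite codegree.'' That inference is invalid: infinite codegree of each individual vertex does not produce a common non-neighbour of a finite set. For instance, $\overline{I_\omega[K_\omega]}=K_\omega[I_\omega]$ is MB-homogeneous with nonedges and every vertex has infinite codegree, yet two adjacent vertices (lying in different independent blocks) have no common co-cone at all. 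Worse, asserting that $A\cup\{c_1,\dots,c_{n-1}\}$ admits a co-cone other than $c_0$ is essentially the inductive step you are in the middle of proving, so the argument is circular at exactly this point.

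Second, the conditions you impose on $w$ are not the ones the argument needs, and the one it does need is missing. Since every pair $\{c_0,x\}$ with $x\in A\cup\{c_1,\dots,c_{n-1}\}$ is a nonedge, the map fixing $A\cup\{c_1,\dots,c_{n-1}\}$ and sending $c_0\mapsto w$ is a monomorphism for \emph{any} $w$ outside the domain; and the non-adjacency of the preimage $z=F^{-1}(c_0)$ to $A$ and to $c_1,\dots,c_{n-1}$ follows from $F$ fixing those vertices pointwise, not from any property of $w$. What you must arrange is $w\not\sim c_0$: otherwise nothing prevents $z\sim c_0$ (that pair maps under $F$ to $\{c_0,w\}$), and then $\{z,c_0,\dots,c_{n-1}\}$ need not be independent---this is precisely the adjacency your write-up never checks. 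The repair is to choose $w\not\sim c_0$ with $w\notin A\cup\{c_0,\dots,c_{n-1}\}$, which exists because the single vertex $c_0$ has infinite codegree (Proposition~\ref{cor:InfDegCodeg}); then $z$ is a co-cone over all of $A\cup\{c_0,\dots,c_{n-1}\}$ and the induction closes, matching the proof of Proposition~\ref{prop:CoconesME}. With that fix your final paragraph also simplifies: the construction yields an increasing chain of independent sets of co-cones over $A$, so the infinite independent set is just their union and no K\"onig-type argument is needed.
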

\vspace{-0.6cm}\begin{flushright}$\square$\end{flushright}

\begin{theorem}\label{thm:mbsubhe}
Countable MB-homogeneous graphs are HE-homogeneous.
\end{theorem}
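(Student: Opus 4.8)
The plan is to argue by contradiction and to extract a numerical clash between the finiteness forced by a failure of HE-homogeneity and the infinitude guaranteed by MB-homogeneity. First I would recall that by Corollary \ref{cor:dgrHE}, item \ref{dgrhe2}, every MB-homogeneous graph possessing property \dgr is automatically HE-homogeneous, so the entire difficulty concerns graphs that violate \dgr. Assuming $G$ is MB- but not HE-homogeneous, Remark \ref{rmk:NegDstr} already isolates a surjective local homomorphism $f\colon A\to B$ together with a vertex $c\notin B$ that cannot be drawn into $\img{g}$ for any function $g$ extending $f$; this is precisely the obstruction to \dgr, and it is the object I would feed into the next step.

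The substantive reduction is performed by Lemma \ref{lem:MBalg}, which repackages this obstruction into a clean combinatorial assertion: there exists a finite nonempty $H\subset G$ whose set $X$ of co-cones is finite and nonempty. I would take this lemma as the crux and build on it directly rather than re-deriving the extension failure from scratch.

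From here the contradiction is immediate. Since $X\neq\varnothing$, fix a co-cone $c$ over $H$; as $H$ is nonempty, the pair $\{c,h\}$ for any $h\in H$ is a nonedge, so $G$ embeds a nonedge. Now Corollary \ref{cor:InfInd} applies verbatim, with the finite set $H$ in the role of $A$ and the co-cone $c$ as witness, and produces an \emph{infinite} independent set of co-cones over $H$. But every such co-cone is by definition an element of $X$, which contradicts the finiteness of $X$ supplied by Lemma \ref{lem:MBalg}. Hence the assumption was untenable and every countable MB-homogeneous graph is HE-homogeneous.

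I expect the genuine obstacle to reside entirely inside Lemma \ref{lem:MBalg} rather than in the theorem itself: the real content is that a failure of \dgr can be localised to a single finite set admitting only finitely many co-cones, which rests on the kernel/transversal manipulation of the witnessing homomorphism and on MB-homogeneity upgrading a local monomorphism to an honest bimorphism. Once that finiteness is secured, its tension with the ``infinitely many co-cones'' phenomenon of Corollary \ref{cor:InfInd} — itself flowing from the infinite codegree of Proposition \ref{cor:InfDegCodeg} together with the switch of hypotheses justified in Remark \ref{rmk:switcheroo} — closes the argument with essentially no further computation.
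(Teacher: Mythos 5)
Your proposal is correct and follows essentially the same route as the paper: invoke Lemma \ref{lem:MBalg} to localise the failure of HE-homogeneity to a finite nonempty $H$ with finitely many co-cones, then contradict this with the infinite independent set of co-cones supplied by Corollary \ref{cor:InfInd}. The only cosmetic difference is that the paper first dispatches the complete, empty, and $I_\omega[K_\omega]$ cases before assuming connectedness, whereas you obtain the nonedge needed for Corollary \ref{cor:InfInd} directly from the nonemptiness of $H$ and of its set of co-cones, which is a perfectly valid (and slightly more streamlined) way to meet that hypothesis.
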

\begin{proof}
This is clear for complete and empty graphs, and also for $I_\omega[K_\omega]$. Suppose for a contradiction that $G$ is connected, not complete, MB-homogeneous and not HE-homogeneous. By Lemma \ref{lem:MBalg}, there is a finite subset $H$ such that the set of co-cones of $H$ is finite. This contradicts Corollary \ref{cor:InfInd}.
\end{proof}

\section{ME=HE}

Recall that the \emph{independence number} of a graph $G$ is $\alpha(G)=\sup\{n\in\N: I_n\in\age{G}\}$. It was observed in \cite{CameronNesetril:2006} (Proposition 2.1 (c)) that for any HH-homogeneous graph with $\neg$\trg, the value of $\sigma(G)=\sup\{n\in\N: K_{1,n}\in\age{G}\}$ is finite. The following theorem (Theorem 20 from \cite{aranda2019independence}) links these two values for connected HH-homogeneous graphs.

\begin{theorem}\label{thm:indeptrg}
If $G$ is a countably infinite connected HH-homogeneous graph with $\neg$\trg, then $\alpha(G)<2\sigma(G)+\left\lceil\frac{\sigma(G)}{2}\right\rceil-1$. 
\end{theorem}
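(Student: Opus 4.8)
The plan is to prove the contrapositive in the form of a forbidden configuration. Write $s=\sigma(G)$; this is finite by the cited result of Cameron and \Jarik, since $G$ is HH-homogeneous with $\neg$\trg, and we may assume $s\geq 2$ (if $s=1$ then every neighbourhood is a clique, forcing $G\cong K_\omega$, which has \trg). It suffices to show that $G$ cannot contain an independent set of size $2s+\lceil s/2\rceil-1$: from such a set I will extract a copy of $K_{1,s+1}$, whence $\sigma(G)\geq s+1$, a contradiction. So I would fix an independent set $I$ with $|I|=2s+\lceil s/2\rceil-1$ (the case $\alpha(G)=\omega$ is covered, since then independent sets of every finite size exist) and work entirely inside $G$ to manufacture a vertex with $s+1$ pairwise non-adjacent neighbours.

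First I would assemble the combinatorial toolkit. Since $K_{1,s}\in\age G$, we have $I_s\in\kk{G}$, so by Observation~\ref{obs:kkokk} \emph{every} independent set of size at most $s$ has a cone. Next comes the exact-neighbourhood property: if $S\subseteq I$ has $|S|=s$ and $c$ is a cone over $S$, then $N(c)\cap I=S$, because a further neighbour $u\in I\setminus S$ would make $S\cup\{u\}$ an independent $(s+1)$-set inside $N(c)$, i.e.\ a copy of $K_{1,s+1}$. The same device shows that cones over \emph{disjoint} $s$-subsets $S_1,S_2\subseteq I$ are non-adjacent (otherwise $S_1\cup\{c_2\}$ is an independent $(s+1)$-set in $N(c_1)$), and that a fixed $s$-subset, while it has infinitely many cones by Theorem~\ref{thm:2facts}, item~\ref{fact2}, admits at most $s$ pairwise non-adjacent ones, since any vertex of the subset is adjacent to all of them.

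With these tools in hand, the construction I would attempt splits $I$ into two $s$-blocks $A,B$ and a remainder $C$ with $|C|=\lceil s/2\rceil-1$. The cones $c_A,c_B$ are then non-adjacent to each other and to $C$, so $Z=\{c_A,c_B\}\cup C$ is an independent set of size $\lceil s/2\rceil+1\leq s$ and hence has a cone $d$. The vertex $d$ already sees the $\lceil s/2\rceil+1$ pairwise non-adjacent vertices of $Z$; the aim is to upgrade $d$ (or a companion cone produced by Theorem~\ref{thm:2facts}, item~\ref{fact2}) to a centre seeing $s+1$ independent leaves, recovering the missing $\lfloor s/2\rfloor$ leaves from the interaction between $d$, the cones over $A$ and $B$, and the exact-neighbourhood property. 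The value $\lceil s/2\rceil-1$ is exactly the largest remainder for which $Z$ stays small enough ($|Z|\leq s$) to admit the common cone $d$, which is where the ceiling enters the bound.

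The main obstacle is precisely this final assembly, and it is quantitative rather than structural: each cone one introduces consumes $s$ vertices of $I$ but contributes only a single new independent vertex, so a careless construction loses exactly the ground it gains (indeed, a maximal family of cones over $A$ and $B$ together with $C$ reproduces an independent set of the same size $2s+\lceil s/2\rceil-1$ rather than exceeding it). The heart of the proof is therefore to balance this bookkeeping—combining the non-adjacency of cones over disjoint blocks with the half-sized remainder $C$—and to verify that the threshold cannot be lowered below $2s+\lceil s/2\rceil-1$. I expect this to require a careful case analysis on the parity of $s$ and on how the independent families of cones over $A$ and over $B$ attach to the common cone $d$, and this is the step I would budget the most effort for.
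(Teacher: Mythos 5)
First, a point of comparison: the paper does not prove this statement at all --- it imports it verbatim as Theorem 20 of \cite{aranda2019independence} --- so there is no in-text proof to measure your attempt against, and your proposal must stand on its own. Your preliminary toolkit is correct and almost certainly part of any proof: $s=\sigma(G)$ is finite and attained; every independent $s$-set has a cone by Observation~\ref{obs:kkokk} (and, via upward closure of $\kk{G}$ in $(\age{G},\preceq)$ from Proposition~\ref{prop:Conditions}, so does \emph{every} subgraph on at most $s$ vertices, a stronger fact you may need); a cone $c$ over an $s$-subset $S$ of an independent set $I$ satisfies $N(c)\cap I=S$; and cones over disjoint $s$-blocks are non-adjacent. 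The reduction to forbidding an independent set of size exactly $2s+\lceil s/2\rceil-1$ and the handling of $s=1$ are also fine.

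The problem is that the argument stops exactly where the theorem begins. Your construction delivers a vertex $d$ whose neighbourhood contains the independent set $Z=\{c_A,c_B\}\cup C$ of size $\lceil s/2\rceil+1\leq s$; since $K_{1,s}$ embeds in $G$ by the very definition of $s$, this is not a contradiction of any kind. The missing $\lfloor s/2\rfloor$ pairwise non-adjacent neighbours of $d$ (or of some other vertex) are precisely what would force the constant $2s+\lceil s/2\rceil-1$, and you explicitly defer their construction to ``the step I would budget the most effort for'' without exhibiting a mechanism that produces them. Your own bookkeeping shows why this is not a routine verification: each cone consumes $s$ vertices of $I$ and returns a single new independent vertex, so iterating the block-and-cone construction does not gain ground; and one can check directly that $N(d)$ may meet $A\cup B$ in a further $\lfloor s/2\rfloor$ vertices without any independent $(s+1)$-set appearing in $N(d)$, so no contradiction is forced by what has been established. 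As written, the proposal proves nothing beyond the existence of $K_{1,\lceil s/2\rceil+1}$, which is weaker than the hypothesis $K_{1,s}\in\age{G}$. Until the final assembly is carried out --- presumably by exploiting cones over non-independent or second-level configurations, together with Theorem~\ref{thm:2facts} --- the proof has a genuine gap at its central step.
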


In particular, connected HH-homogeneous graphs that embed arbitrarily large independent set satisfy \trg.

\begin{theorem}\label{thm:compme}
The complement of an ME-homogeneous graph is MH-homo\-geneous.
\end{theorem}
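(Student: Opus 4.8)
The plan is to exploit a clean duality between surjective monomorphisms of $G$ and of $\overline G$, together with the fact that a section of a surjective endomorphism of $G$ is automatically a homomorphism of $\overline G$. First I would invoke the reduction to surjective morphisms and fix a surjective monomorphism $f\colon A\to B$ of $\overline G$; being surjective, it is a bijection of vertex sets. The key initial observation is that, reading $A$ and $B$ as the induced subgraphs of $G$ on the same vertex sets, the inverse bijection $f^{-1}\colon B\to A$ is a surjective monomorphism of $G$. Indeed, $f$ preserves the edges of $\overline G$, that is, the nonedges of $G$; taking contrapositives, if $u\sim v$ in $G$ with $u,v\in B$, then $f^{-1}(u)\sim f^{-1}(v)$ in $G$, so $f^{-1}$ preserves $G$-edges and, being a bijection, is a monomorphism of $G$.

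Next I would feed $f^{-1}$ into the ME-homogeneity of $G$: there is an epimorphism (surjective endomorphism) $F\colon G\to G$ with $F_B=f^{-1}$, so that $F(f(a))=a$ for every $a\in A$. I would then build an endomorphism of $\overline G$ extending $f$ by taking a section of $F$. Concretely, define $H\colon G\to G$ by $H(a)=f(a)$ for $a\in A$ and, for each remaining vertex $x$, letting $H(x)$ be an arbitrary element of the fibre $F^{-1}(x)$, which is nonempty because $F$ is surjective. By construction $F\circ H=\mathrm{id}$. The crucial point is that any right inverse of a surjective $G$-homomorphism is a homomorphism of $\overline G$: if $x\sim y$ in $\overline G$, i.e.\ $x\not\sim y$ in $G$, then $H(x)\sim H(y)$ in $G$ would force $x=F(H(x))\sim F(H(y))=y$ in $G$, a contradiction; hence $H(x)\sim H(y)$ in $\overline G$. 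Since $H$ extends $f$ and is an endomorphism of $\overline G$, this establishes MH-homogeneity of $\overline G$.

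The main obstacle is not any single hard step but the bookkeeping of edge-preservation versus edge-reflection under complementation: one must be careful that it is $f^{-1}$, not $f$, that becomes a $G$-monomorphism, and correspondingly that the section $H$ genuinely extends $f$ (which requires $f(a)\in F^{-1}(a)$, guaranteed by $F_B=f^{-1}$) and is total (guaranteed by surjectivity of $F$). I would also remark that sections are automatically injective, so $H$ is in fact a monomorphism of $\overline G$; thus the argument yields the formally stronger statement that $\overline G$ is MM-homogeneous, of which the stated MH-homogeneity is an immediate consequence. As a consistency check, MH-homogeneity coincides with HH-homogeneity for countable graphs by Theorem \ref{thm:rs}, so the conclusion can equivalently be read as saying that the complement of an ME-homogeneous graph is HH-homogeneous.
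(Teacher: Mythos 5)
Your proposal is correct and follows essentially the same route as the paper: invert the $\overline G$-monomorphism to get a $G$-monomorphism, extend it to an epimorphism $F$ of $G$ by ME-homogeneity, and take a right inverse of $F$ containing $f$, which is automatically an endomorphism of $\overline G$. Your additional remark that the section is injective (so $\overline G$ is in fact MM-homogeneous) is a valid, slightly stronger conclusion, but the argument is the paper's.
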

\begin{proof}
Let $G$ be an ME-homogeneous graph, and suppose that $f\colon A\to B$ is a local surjective monomorphism in $\overline{G}$. Then $f^{-1}\colon B\to A$ is a monomorphism in $G$, which by ME-homogeneity is restriction of some surjective endomorphism $F\colon G\to G$. Now let $\overline F$ be a right inverse of $F$ containing $f$.

We claim that $\overline F$ is an endomorphism of $\overline G$. It is clearly defined on all of $\overline{G}$; now suppose that $v\sim w$ in $\overline G$. This is equivalent ot $v\not\sim w$ in $G$, and therefore, for all $v',w'$ with $F(v')=v,F(w')=w$ we have $v'\not\sim w$. In particular, $\overline{F}(v)\sim\overline{F}(w)$ in $\overline{G}$.
\end{proof}

\begin{corollary}\label{cor:mectrg}
If $G$ is an ME-homogeneous graph with connected complement and $G$ embeds arbitrarily large cliques, then $G$ satisfies \ctrg.
\end{corollary}
\begin{proof}
By Theorems \ref{thm:compme} and \ref{thm:rs}, $\overline{G}$ is an HH-homogeneous graph that embeds arbitrarily large cliques. It follows from Theorem \ref{thm:indeptrg} that $\overline{G}$ satisfies \trg, or, equivalently, $G$ satisfies \ctrg.
\end{proof}

\begin{theorem}\label{thm:amon}
A countably infinite graph $G$ is ME-homogeneous iff it is HE-homogeneous.
\end{theorem}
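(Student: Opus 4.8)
The plan is to prove the theorem by establishing both inclusions $\mathrm{ME}\subseteq\mathrm{HE}$ and $\mathrm{HE}\subseteq\mathrm{ME}$ separately. The inclusion $\mathrm{HE}\subseteq\mathrm{ME}$ is immediate from the partial order of morphism-extension classes: every monomorphism is a homomorphism, so if every local homomorphism extends to an epimorphism, then in particular every local monomorphism does, which is exactly $\mathrm{ME}$-homogeneity. Thus the entire content of the theorem lies in proving $\mathrm{ME}\subseteq\mathrm{HE}$, i.e.\ that every countably infinite $\mathrm{ME}$-homogeneous graph is $\mathrm{HE}$-homogeneous.

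To prove $\mathrm{ME}\subseteq\mathrm{HE}$, I would split into cases according to the structure of $G$. Since an $\mathrm{ME}$-homogeneous graph is $\mathrm{MH}$-homogeneous and hence (by Theorem~\ref{thm:rs}) $\mathrm{HH}$-homogeneous, I can invoke the $\mathrm{HH}$-classification tools already developed. The disconnected case should be handled first: by Lemma~\ref{lem:disccliques} a disconnected $\mathrm{IH}$-homogeneous graph is of the form $I_n[K_m]$, and for these one can verify $\mathrm{HE}$-homogeneity directly (the complete and empty graphs, and $I_\omega[K_\omega]$, are easily seen to be $\mathrm{HE}$-homogeneous as in the proof of Theorem~\ref{thm:mbsubhe}). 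The main work is the connected case. Here the natural strategy is to verify the two conditions of Proposition~\ref{prop:ConditionsHE}: since $G$ is already $\mathrm{HH}$-homogeneous, it suffices to show $\hh{G}\cap\ohh{G}=\varnothing$ and that $\hh{G}$ is downward-closed in $(\age{G},\preceq)$. The first condition follows automatically for any $\mathrm{ME}$-homogeneous graph by the same co-cone argument used at the start of Proposition~\ref{prop:ConditionsME}.

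The crux is therefore the second condition: upgrading downward-closure of $\hh{G}$ from the order $\sqsubseteq$ (surjective monomorphisms), which holds by $\mathrm{ME}$-homogeneity via Proposition~\ref{prop:ConditionsME}, to the coarser order $\preceq$ (surjective homomorphisms). This is precisely where $\preceq$ and $\sqsubseteq$ differ, and I expect this to be the main obstacle. The plan is to exploit Corollary~\ref{cor:mectrg}: if the complement $\overline{G}$ is connected and $G$ embeds arbitrarily large cliques, then $G$ satisfies \ctrg, which forces $\ohh{G}=\varnothing$ and makes the downward-closure condition trivial. So the difficulty reduces to graphs that either do not embed arbitrarily large cliques or have disconnected complement. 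For bounded clique number I would argue that $A\preceq B$ can be refined through a chain of single edge-contractions, each of which is realized by a surjective monomorphism between intermediate structures once one accounts for the cone/co-cone structure guaranteed by Propositions~\ref{prop:infdegcodegr} and~\ref{prop:CoconesME} (infinite codegrees give room to separate identified vertices), so that $\preceq$-downward-closure follows from $\sqsubseteq$-downward-closure.

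Concretely, given $A\preceq B$ with $B\in\hh{G}$ via a surjective homomorphism that is not injective, I would factor it as a composite of maps that each merge a single pair of vertices, and show that each such merge is witnessed by a surjective monomorphism after replacing one of the merged vertices by a cone or co-cone supplied by the connectedness and infinite-degree/codegree hypotheses. Transitivity of downward-closure along this chain then yields $A\in\hh{G}$. The disconnected-complement subcase should be dispatched either by appealing to the complement being of the restricted form dictated by the $\mathrm{HH}$-structure, or by a direct check that \ctrg holds on the relevant side; I would verify that these degenerate cases coincide with graphs already covered by the $I_n[K_m]$ analysis or by Theorem~\ref{thm:mbsubhe}. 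Assembling the three cases (disconnected, connected with \ctrg, connected with bounded clique number) and invoking Proposition~\ref{prop:ConditionsHE} completes the proof that $G$ is $\mathrm{HE}$-homogeneous.
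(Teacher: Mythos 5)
Your skeleton matches the paper's proof in the cases that actually occur: the disconnected case, the disconnected-complement case, and the main connected case, where the decisive tool is indeed Corollary \ref{cor:mectrg} (the complement trick) giving \ctrg and hence HE-homogeneity (the paper concludes via \dgr and Corollary \ref{cor:dgrHE} rather than via Proposition \ref{prop:ConditionsHE}, but that difference is cosmetic). The problem is your third case, ``connected with bounded clique number''. First, it is vacuous: by Theorem \ref{thm:2facts}(1), an infinite connected HH-homogeneous graph has no finite maximal cliques, so every connected ME-homogeneous graph embeds arbitrarily large cliques, and the only genuine dichotomy in the connected case is whether $\overline{G}$ is connected. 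Second, and more seriously, the argument you sketch for it cannot be repaired: a surjective monomorphism between finite graphs is a bijective homomorphism, so $A\sqsubseteq B$ forces $|A|=|B|$, whereas a single merge of two vertices strictly decreases cardinality. No chain of $\sqsubseteq$-comparisons, however you interleave cones and co-cones, can realize a non-injective step of $\preceq$, so ``upgrading'' downward-closure from $\sqsubseteq$ to $\preceq$ by factoring through edge-contractions is a dead end. Had the case been nonempty, your proof would fail there.

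Two smaller points. In the disconnected case you cannot ``verify HE-homogeneity directly'' for all of $I_n[K_m]$: the graphs $I_n[K_\omega]$ with $2\le n<\omega$ are neither ME- nor HE-homogeneous, so you must first prune the list as in Proposition \ref{prop:disconnectedme}. In the disconnected-complement case, \ctrg does \emph{not} hold (in $\overline{I_\omega[K_\omega]}$ only sets contained in a single independent block have co-cones), so of your two proposed outs only the second survives: identify $\overline{G}\cong I_\omega[K_\omega]$ after eliminating the other forms, then either quote Theorem \ref{thm:mbsubhe} (since $\overline{I_\omega[K_\omega]}$ is MB-homogeneous) or check the conditions of Proposition \ref{prop:ConditionsHE} directly, as the paper does.
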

\begin{proof}
HE$\subset$ME is clear.

Suppose first that $G$ is an ME-homogeneous disconnected graph. Then it is of the form $I_\omega[K_\omega]$ or $I_\omega[K_n]$ (Proposition \ref{prop:disconnectedme}). These graphs are HH-homogeneous with \ctrg (which implies \dgr), and therefore HE-homogeneous (Proposition \ref{prop:HHctrg}). 

If the complement of an ME-homogeneous graph $G$ is disconnected, then $\overline{G}$ is an MH-homogeneous graph, and by Theorems \ref{thm:rs} and \ref{thm:2facts}, it is of the form $I_m[K_n]$ with $\max{m,n}=\omega$. We will eliminate some of the possibilities.

We claim that $\overline{G}$ is not of the forms $I_\omega[K_n]$ or $I_n[K_\omega]$ with finite $n\geq 2$ because in this case $G$ would be a connected ME-homogeneous graph embedding nonedges and with finite maximal independent sets, contradicting Proposition \ref{prop:CoconesME} via Remark \ref{rmk:switcheroo}. 

It follows from Lemma \ref{lem:disccliques} and Theorem \ref{thm:iasonhh} that $\overline{G}$ is isomorphic to $I_\omega[K_\omega]$, so $G$ is isomorphic to $\overline{I_\omega[K_\omega]}$. Now $\overline{I_\omega[K_\omega]}$ is an HH-homogeneous graph, and $\hh{\overline{I_\omega[K_\omega]}}$ is the set of finite independent sets, which is clearly downward-closed in $(\age{G},\preceq)$, and therefore $\overline{I_\omega[K_\omega]}$ is HE-homogeneous (Proposition \ref{prop:ConditionsHE}).

Now, if $G$ is connected, ME-homog	eneous, and has connected complement, then from MH-homogeneity it follows that $G$ embeds arbitrarily large cliques. By Corollary \ref{cor:mectrg}, $G$ satisfies \ctrg, so it also satisfies \dgr. By Corollary \ref{cor:dgrHE}, $G$ is HE-homogeneous.
\end{proof}

\section{$\mathrm{ME=HE=MB\cup\{I_\omega[K_n]:n\geq2\}}$}

\begin{lemma}\label{thm:heconnmb}
If $G$ is a connected HE-homo\-geneous graph, then it is MB-homo\-geneous.
\end{lemma}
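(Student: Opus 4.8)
The plan is to prove the contrapositive-friendly statement by using the characterisations already established, reducing the connected HE case to the dichotomy between graphs satisfying \trg and those that do not. Since $G$ is connected and HE-homogeneous, it is HH-homogeneous by Theorem \ref{thm:rs}, so I would first dispose of the complete case (where $G\cong K_\omega$, which is trivially MB-homogeneous) and then assume $G$ embeds a nonedge. By Proposition \ref{prop:infdegcodegr}, $G$ then has no finite maximal independent sets, so it embeds arbitrarily large independent sets; equivalently $\alpha(G)=\omega$. Applying Theorem \ref{thm:indeptrg} (in its stated consequence: connected HH-homogeneous graphs embedding arbitrarily large independent sets satisfy \trg), I conclude that $G$ satisfies \trg, hence there is a bijective homomorphism $\mathcal R\to G$ and $G$ has cones over every finite subset.

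The core of the argument is then to upgrade from HE-homogeneity plus \trg to MB-homogeneity, which by Remark \ref{rmk:strmh} amounts to establishing Property \str, i.e.\ showing that every surjective local monomorphism can be extended one vertex at a time as a monomorphism. The natural route is to invoke Proposition \ref{prop:NoStr}, which says that an HE-homogeneous graph with \str is MB-homogeneous — but that has the implication the wrong way, so instead I would verify \str directly. Given a surjective finite monomorphism $f\colon A\to B$ and a point $c\notin A$, I need $d\notin B$ with $f\cup\{(c,d)\}$ a monomorphism; here \trg supplies, for the finite set $B$, a cone over $B$, and more strongly one uses that $G$ has \emph{infinitely many} cones over any finite set (Theorem \ref{thm:2facts}, item \ref{fact2}) together with the extension-axiom structure inherited from the bijective homomorphism $\mathcal R\to G$ to find an image realising the correct adjacency pattern $\psi_{c,A}$ relative to $B$ while staying outside $B$.

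The main obstacle I anticipate is precisely this last point: \trg only guarantees cones (vertices adjacent to \emph{all} of $B$), whereas extending a monomorphism requires a vertex $d$ with a \emph{prescribed} adjacency pattern to $B$ — adjacent to the $f$-images of the neighbours of $c$ in $A$ and nonadjacent to the images of its non-neighbours — and moreover $d$ must be distinct from the finitely many vertices of $B$. To handle the nonadjacencies I would lean on the fact, established via Proposition \ref{prop:infdegcodegr} and Corollary \ref{cor:InfInd}-style reasoning, that under \trg and infinite codegree the graph satisfies enough of the negative extension behaviour to realise arbitrary finite adjacency types; concretely, since $G$ satisfies \trg and every vertex has infinite degree and codegree, one shows $G$ realises every finite type over a finite set, so the required $d$ exists and infinitely many choices are available, guaranteeing one outside $B$.

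Thus the overall skeleton is: (i) reduce to the non-complete case and extract $\alpha(G)=\omega$ from Proposition \ref{prop:infdegcodegr}; (ii) obtain \trg from Theorem \ref{thm:indeptrg}; (iii) promote \trg plus infinite codegree to full finite-type-realisation and hence to Property \str; (iv) conclude MB-homogeneity from Remark \ref{rmk:strmh}. The delicate step is (iii), and I would expect the cleanest way to present it is to show that a connected HE-homogeneous graph embedding a nonedge is in fact bimorphism-equivalent to $\mathcal R$ (it satisfies both \trg and \ctrg, the latter coming from the infinite-codegree conclusion of Proposition \ref{prop:infdegcodegr}), after which \str is immediate from the extension axioms.
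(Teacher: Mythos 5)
There is a genuine gap, and it sits at the end of your argument: step (iv) concludes MB-homogeneity from Remark \ref{rmk:strmh}, but that remark only says \str is equivalent to \emph{MM}-homogeneity. Property \str lets you extend a local monomorphism one vertex at a time injectively; it gives no control whatsoever over surjectivity of the resulting endomorphism, which is the entire content of upgrading MM to MB. Indeed, for connected graphs MM-homogeneity is already known from Theorem \ref{thm:rs}, and it is strictly weaker than MB (the Rusinov--Schweitzer examples are connected and MM- but not MB-homogeneous), so an argument that terminates at \str proves nothing new. The irony is that you dismissed the one tool that closes this gap: Proposition \ref{prop:NoStr} is \emph{not} ``the implication the wrong way'' --- it says exactly that HE-homogeneity plus \str implies MB-homogeneity, with HE supplying the ``back'' steps (preimages) and \str the ``forth'' steps (injective images) of a back-and-forth. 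The paper's proof is precisely: HE $\Rightarrow$ HH, connected HH $\Rightarrow$ MM (Theorem \ref{thm:rs}), MM $\Leftrightarrow$ \str (Remark \ref{rmk:strmh}), and then Proposition \ref{prop:NoStr}. Your detour through \trg is not needed for this.

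Two secondary problems in step (iii). First, your worry about realising the ``correct adjacency pattern $\psi_{c,A}$'' including nonadjacencies is a red herring: a monomorphism need only preserve edges, so any cone over $B$ (which automatically lies outside $B$) is a valid image, and \str follows from \trg in one line --- but the claim that $G$ ``realises every finite type over a finite set'' is actually false in general, since that property characterises the Rado graph up to isomorphism, whereas connected MB-homogeneous graphs need only be bimorphism-equivalent to $\mathcal R$. Second, your fallback route via \ctrg does not follow from Proposition \ref{prop:infdegcodegr}: infinite codegree and the absence of finite maximal independent sets give co-cones over finite \emph{independent} sets only, not over arbitrary finite subsets. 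The paper obtains \ctrg only through Theorem \ref{thm:compme} and Theorem \ref{thm:indeptrg} applied to $\overline G$ (Corollary \ref{cor:mectrg}), and only when $\overline G$ is connected; that machinery is part of the proof that ME$=$HE, not something available for free here.
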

\begin{proof}
Any HE-homogeneous graph is also HH-homogeneous, and therefore MH-homogeneous (Theorem \ref{thm:rs}). Since $G$ is connected, Theorem \ref{thm:rs} tells us that $G$ is MM-homogeneous, which is equivalent to satisfying \str (Remark \ref{rmk:strmh}). Now Proposition \ref{prop:NoStr} completes the argument, and $G$ is MB-homogeneous.
\end{proof}

\begin{theorem}\label{thm:mbmeheeq}
In the class of countably infinite connected graphs, HE=ME=MB.
\end{theorem}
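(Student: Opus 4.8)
The plan is to observe that essentially all the substantive work has already been carried out in the preceding sections, so the triple equality follows by chaining three earlier results together. First I would recall that Theorem~\ref{thm:amon} gives ME\,=\,HE for \emph{every} countably infinite graph, and hence in particular for connected ones. Consequently it suffices to establish HE\,=\,MB within the class of connected countable graphs, and the claimed equality ME\,=\,HE\,=\,MB then follows at once.

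For the inclusion HE\,$\subseteq$\,MB I would simply invoke Lemma~\ref{thm:heconnmb}, which asserts precisely that any connected HE-homogeneous graph is MB-homogeneous. This is the step where connectedness is genuinely used: it is what lets Theorem~\ref{thm:rs} upgrade MH-homogeneity to MM-homogeneity (equivalently, Property~\str by Remark~\ref{rmk:strmh}), after which Proposition~\ref{prop:NoStr} converts HE-homogeneity into MB-homogeneity via a back-and-forth construction of a bimorphism.

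For the reverse inclusion MB\,$\subseteq$\,HE I would apply Theorem~\ref{thm:mbsubhe}, which states that every countable MB-homogeneous graph is HE-homogeneous; this holds with no connectedness hypothesis, so it certainly applies to connected graphs. Combining the two inclusions yields HE\,=\,MB in the connected case, and together with ME\,=\,HE from Theorem~\ref{thm:amon} we conclude ME\,=\,HE\,=\,MB.

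At this level there is no real obstacle to overcome: the difficulty is entirely front-loaded into the preceding lemmas, namely the complement-and-co-cone analysis behind MB\,$\subseteq$\,HE (Lemma~\ref{lem:MBalg} and Corollary~\ref{cor:InfInd}) and the \str-based back-and-forth of Proposition~\ref{prop:NoStr}. The only point I would be careful to state explicitly is \emph{where} connectedness is invoked, since the two inclusions are asymmetric in this respect: HE\,$\subseteq$\,MB requires connectedness (through Lemma~\ref{thm:heconnmb}, to secure MM-homogeneity), whereas MB\,$\subseteq$\,HE needs no such assumption.
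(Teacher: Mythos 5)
Your proof is correct and cites exactly the three ingredients the paper uses: Theorem~\ref{thm:amon} for ME\,=\,HE, Theorem~\ref{thm:mbsubhe} for MB\,$\subseteq$\,HE, and Lemma~\ref{thm:heconnmb} for HE\,$\subseteq$\,MB in the connected case. This is the same argument as the paper's, with the added (and accurate) remark about where connectedness is actually needed.
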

\begin{proof}
We know HE=ME (Theorem \ref{thm:amon}) and MB$\subset$HE (Theorem \ref{thm:mbsubhe}) for general countable graphs. Lemma \ref{thm:heconnmb} completes the argument.
\end{proof}

\begin{proposition}\label{prop:disconnectedme}
The only disconnected ME-homogeneous graphs are $I_\omega[K_\omega]$ and $I_\omega[K_n]$ for $n\in\N$.
\end{proposition}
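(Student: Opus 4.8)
The plan is to first cut the problem down to a short list of candidates and then decide which candidates actually work. Since an epimorphism is in particular a homomorphism, ME-homogeneity implies MH-homogeneity, and since every isomorphism is a monomorphism, MH-homogeneity implies IH-homogeneity. Hence a disconnected ME-homogeneous graph $G$ is a disconnected IH-homogeneous graph, so by Lemma \ref{lem:disccliques} it is isomorphic to $I_n[K_m]$ with $n,m\in\omega+1$ and $\max\{m,n\}=\omega$; being disconnected forces $n\geq 2$. It then remains to decide which of the graphs $I_n[K_m]$ with $n\geq 2$ are ME-homogeneous, and I claim these are exactly the ones with $n=\omega$, namely $I_\omega[K_m]$ for $m\in\omega+1$.

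Next I would rule out the graphs $I_n[K_\omega]$ with finite $n\geq 2$. The key structural observation is that any epimorphism $F$ of such a graph must permute its $n$ connected components: a homomorphism sends a connected subgraph to a connected subgraph, so $F$ maps each clique-component into a single component, inducing a function $\pi$ on the set of $n$ components; surjectivity of $F$ forces $\pi$ to be onto, hence (as $n$ is finite) a permutation. Now pick vertices $v_1,v_2$ in two distinct components and a vertex $w\neq v_1$ adjacent to $v_1$ in $v_1$'s component, and consider the finite monomorphism $f$ given by $v_1\mapsto v_1$ and $v_2\mapsto w$; this sends the nonedge $\{v_1,v_2\}$ to an edge and is a surjective monomorphism onto its image. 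Any epimorphism extending $f$ would send both $v_1$ and $v_2$ into the component of $v_1$, forcing $\pi$ to identify two distinct components, contradicting that $\pi$ is a permutation. Hence $I_n[K_\omega]$ is not ME-homogeneous for finite $n\geq 2$.

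Finally I would verify that each $I_\omega[K_m]$ with $m\in\omega+1$ is genuinely ME-homogeneous, using the criterion of Proposition \ref{prop:ConditionsME}. These graphs are IA-homogeneous (they occur in the Lachlan--Woodrow list, up to complementation), hence IH-homogeneous, and being disconnected they are HH-homogeneous by Theorem \ref{thm:iasonhh}. For the two co-cone conditions, the crucial point is that because there are infinitely many components, every finite $A\subset I_\omega[K_m]$ has a co-cone: $A$ meets only finitely many cliques, so any vertex of a clique disjoint from $A$ is a co-cone over $A$. Consequently $\ohh{I_\omega[K_m]}=\varnothing$ and $\hh{I_\omega[K_m]}=\age{I_\omega[K_m]}$, so the disjointness condition $\hh{G}\cap\ohh{G}=\varnothing$ and the downward-closure of $\hh{G}$ in $(\age{G},\sqsubseteq)$ both hold trivially. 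This establishes ME-homogeneity and completes the classification.

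I expect the hard part to be the step ruling out $I_n[K_\omega]$ for finite $n$: one must correctly identify what the \emph{surjective} endomorphisms of a disjoint union of finitely many infinite cliques look like, and it is precisely the feature that a monomorphism may collapse a nonedge to an edge which lets the finite map escape the rigidity imposed on epimorphisms. The positive direction is comparatively routine once the co-cone criterion of Proposition \ref{prop:ConditionsME} is available, since the presence of infinitely many components makes co-cones ubiquitous.
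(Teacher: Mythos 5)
Your proposal is correct and follows essentially the same route as the paper: the candidates are narrowed to $I_n[K_m]$ via Lemma \ref{lem:disccliques}, the graphs $I_n[K_\omega]$ with finite $n\geq 2$ are excluded by observing that an epimorphism extending the monomorphism collapsing a nonedge to an edge cannot cover all $n$ components, and the positive cases are confirmed through the co-cone conditions of Proposition \ref{prop:ConditionsME}. You simply spell out the details (the reduction step and the verification that $\ohh{G}=\varnothing$) that the paper leaves implicit.
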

\begin{proof}
Conditions \ref{Cond5} and \ref{Cond6a} from Proposition \ref{prop:ConditionsME} are easy to verify for each graph in the statement. On the other hand, a graph of the form $I_n[K_\omega]$  (finite $n>1$) cannot be ME-homogeneous, because the image of any endomorphism extending the monomorphism that maps a nonedge to an edge has fewer than $n$ connected connected components.
\end{proof}

\begin{theorem}\label{thm:discmehemb}
In the class of countably infinite graphs, $$\mathrm{ME=HE=MB\cup\{I_\omega[K_n]:n\geq2\}}.$$
\end{theorem}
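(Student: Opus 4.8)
The plan is to assemble this final equality from the machinery already developed, treating it as a bookkeeping exercise over the connected/disconnected cases. First I would recall that $\mathrm{ME}=\mathrm{HE}$ holds in full generality by Theorem \ref{thm:amon}, so only one of the two symbols needs to be tracked; I will work with $\mathrm{ME}$ and establish the set-theoretic equality $\mathrm{ME}=\mathrm{MB}\cup\{I_\omega[K_n]:n\geq 2\}$ by proving both inclusions.

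For the inclusion $\mathrm{MB}\cup\{I_\omega[K_n]:n\geq 2\}\subseteq\mathrm{ME}$, I would handle the two pieces separately. Every $\mathrm{MB}$-homogeneous graph is $\mathrm{HE}$-homogeneous by Theorem \ref{thm:mbsubhe}, hence $\mathrm{ME}$-homogeneous since $\mathrm{HE}\subseteq\mathrm{ME}$; and each $I_\omega[K_n]$ with $n\geq 2$ is $\mathrm{ME}$-homogeneous, which is exactly the content verified in Proposition \ref{prop:disconnectedme} (these are $\mathrm{HH}$-homogeneous with $\neg$\trg\ but satisfy Conditions \ref{Cond5} and \ref{Cond6a} of Proposition \ref{prop:ConditionsME}).

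For the reverse inclusion $\mathrm{ME}\subseteq\mathrm{MB}\cup\{I_\omega[K_n]:n\geq 2\}$, I would split on connectedness of an arbitrary $\mathrm{ME}$-homogeneous graph $G$. If $G$ is connected, then by Theorem \ref{thm:mbmeheeq} (which states $\mathrm{HE}=\mathrm{ME}=\mathrm{MB}$ for connected graphs) $G$ is $\mathrm{MB}$-homogeneous, so it lands in the first part of the union. If $G$ is disconnected, then Proposition \ref{prop:disconnectedme} pins it down to $I_\omega[K_\omega]$ or $I_\omega[K_n]$ for some $n\in\N$; the former is $\mathrm{MB}$-homogeneous (it is one of the graphs in Theorem \ref{thm:ah}, and appears as $\mathrm{MB}$-homogeneous throughout, e.g.\ via Proposition \ref{cor:InfDegCodeg}), while the graphs $I_\omega[K_n]$ with $n\geq 2$ are exactly the listed exceptional family, and $I_\omega[K_1]=I_\omega$ is the empty graph, which is complete-or-empty and thus trivially $\mathrm{MB}$-homogeneous. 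Either way $G$ belongs to the right-hand side.

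There is no genuine obstacle here, since every ingredient is quoted from earlier results; the only point requiring minor care is the boundary cases in the disconnected family, namely confirming that $I_\omega[K_\omega]$ and the degenerate $I_\omega[K_1]$ get absorbed into $\mathrm{MB}$ rather than being spuriously excluded, so that the union on the right is neither too large nor too small. I would state the proof as a short two-way inclusion argument invoking Theorems \ref{thm:amon}, \ref{thm:mbsubhe}, \ref{thm:mbmeheeq} and Proposition \ref{prop:disconnectedme}, with a one-line remark dispatching these boundary graphs.
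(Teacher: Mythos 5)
Your proposal is correct and follows essentially the same route as the paper: reduce to $\mathrm{ME}=\mathrm{HE}$ via Theorem~\ref{thm:amon}, use Proposition~\ref{prop:disconnectedme} for the disconnected case and Theorem~\ref{thm:mbmeheeq} for the connected case, and check the boundary graphs $I_\omega[K_\omega]$ and $I_\omega$. The paper's own proof is just a terser citation of the same ingredients (it quotes Proposition~\ref{cor:InfDegCodeg} where you quote Theorem~\ref{thm:ah} for the fact that $I_\omega[K_\omega]$ lands in $\mathrm{MB}$), so there is nothing substantive to add.
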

\begin{proof}
Clearly MB$\subseteq$ME. The rest follows from Theorem \ref{thm:amon}, Proposition \ref{prop:disconnectedme}, and Proposition \ref{cor:InfDegCodeg}.
\end{proof}

\section{Proper inclusions}
In this section we present examples that separate the morphism-extension classes. 

So far, we know for countable connected graphs:
\begin{enumerate}
\item{HM=HI=HB=HA=MI=MA (Theorem \ref{thm:bottom})}
\item{II=IA (Lemma 1.1 in \cite{LockettTruss:2014})}
\item{ME=MB=HE (Theorem \ref{thm:mbmeheeq})}
\item{MM=MH=HH (Theorem \ref{thm:rs})}
\end{enumerate}

And for general countable graphs:
\begin{enumerate}
\item{HM=HI=HB=HA (Theorem \ref{thm:bottom})}
\item{MI=MA (Lemma 1.1 in \cite{LockettTruss:2014})}
\item{II=IA (Lemma 1.1 in \cite{LockettTruss:2014})}
\item{ME=HE (Theorem \ref{thm:amon}}
\item{MH=HH (Theorem \ref{thm:rs})}
\end{enumerate}

Moreover, we know that the only examples separating MM from HH=MH and ME=HE from MB are disconnected graphs (Theorems \ref{thm:rs} and \ref{thm:discmehemb}). Now we will give examples witnessing all other proper containments.

\begin{example}[ME=HE$\subset$MH=HH; MB$\subset$MM]
The Rusinov-Schweitzer examples are HH-homogeneous and connected, therefore in MM and MH=HH (Theorem \ref{thm:rs}), but they are not in the Aranda-Hartman catalogue (Theorem \ref{thm:ah}), because they have finite independence number at least 2. This proves MB$\subset$ MM.

The same example proves ME$\subset$HH. If $n\geq 3$, then there is exactly one independent set of size $n$ in $RS(n)$, say $Z$. Consider the monomorphism mapping $Z$ to a clique of size $n$. This is not restriction of any epimorphism, because the preimage of $Z$ under any epimorphism of $RS(n)$ is $Z$ itself.
\end{example}

\begin{example}[ME=HE$\subset$IE; MH=HH$\subset$IM]
The universal homogeneous \linebreak$K_n$-free graph is in IE and IM, but not in ME or HH (see Observation \ref{obs:hn}).
\end{example}

\begin{example}[IB$\subset$IE; IM$\subset$IH]
Let $H_3=(V,E)$ denote the universal homogeneous triangle-free graph. Fix a nonedge $u,v$ and a vertex $w\in N(u)\cap N(v)$. Define $H_3'$ as $H_3'=(V,E')$, where $E'$ is the subset of $E$ obtained as follows:
\begin{enumerate}
\item{Partitioning $(N(u)\cap N(v))\setminus\{w\}$ into two infinite subsets $C_u$ and $C_w$.}
\item{$E'=E\setminus(\{xv:x\in C_u\}\cup\{xu:x\in C_v\})$}
\end{enumerate}
It is clear that $H_3'$ satisfies \ctrg, and therefore \dgr. Next, we prove that $H_3'$ is IH-homogeneous, which, by Proposition \ref{prop:ihdgr}, suffices to prove IE-homogeneity.

Suppose that $f:A\to B$ is an isomorphism between finite substructures of $H_3'$. Let $c\notin A$.
\begin{enumerate}
\item{If $\{u,v,w\}\not\subset B$, then write $X=\{u,v,w\}\setminus B$. We can find $c'$ with $c'\sim f(s)\leftrightarrow c\sim s$ with the additional requirement $c'\not\sim x$ for all $x\in X$.}
\item{If $\{u,v,w\}\subset B$, then map $c\to w$ if $c\sim f^{-1}(v)\wedge c\sim f^{-1}(w)$. In all other cases we can find $c'$ as above.}
\end{enumerate}
Finally, we prove that $H_3'$ is not IM-homogeneous (and therefore not IB-homo\-geneous). To do this, consider the isomorphism $f\colon c\mapsto u, d\mapsto v$, where $c,d$ is a nonedge and $\{c,d\}\neq\{u,v\}$. This isomorphism cannot be extended to an injective endomorphism because $c$ and $d$ have infinitely many common neighbours, but $u$ and $v$ have only one.
\end{example}

\begin{example}[IE$\subset$IH]
Let $\mathcal{R}=(V,E)$ be the Rado graph, $w\notin V$, and $\mathcal{R}'=(V\cup\{w\},E\cup\{wx:x\in V\})$. 

Now $R'$ is not IE-homogeneous because for any vertex $v\neq w$ there exists $v'$ such that $v,v'$ is a nonedge. It follows that mapping $w$ to any $v\in V$ cannot be extended to a surjective mapping. $\mathcal{R}'$ satisfies property \trg, and so it is MH-homogeneous and therefore IH-homogeneous.
\end{example}

The information we have gathered suffices to draw the posets of morphism-extension classes of graphs and connected graphs.
\begin{figure}[h!]
\centering
\includegraphics[scale=0.8]{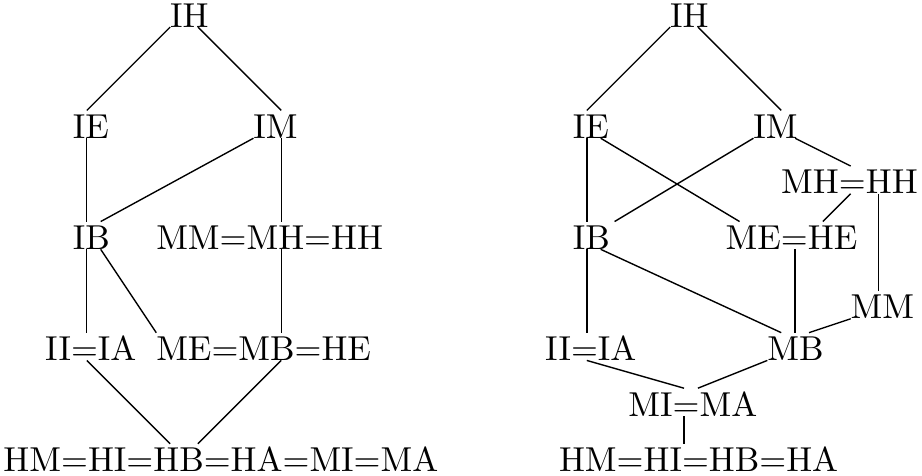}
\caption{Morphism-extension classes of countable graphs (right), and countable connected graphs (left), ordered by $\subseteq$.}
\end{figure}

\section{Ackowledgements}
I thank Thomas D.H. Coleman for the fruitful correspondence of early 2018. 

Research funded by the ERC under the European Union's Horizon 2020 Research and Innovation Programme (grant agreement No. 681988, CSP-Infinity).

\bibliography{Morph}
\end{document}